\numberwithin{equation}{section}
\definecolor{light-gray}{gray}{0.95}
\newtheorem{theorem}{Theorem}[section]
\newtheorem{lemma}[theorem]{Lemma}
\newtheorem{proposition}[theorem]{Proposition}
\newtheorem{remark}[theorem]{Remark}
\newtheorem{definition}{Definition}
\newcommand{\cev}[1]{\reflectbox{\ensuremath{\vec{\reflectbox{\ensuremath{#1}}}}}}
\newcommand{\mc}[1]{{\mathcal #1}}
\newcommand{\mf}[1]{{\mathfrak #1}}
\newcommand{\bb}[1]{{\mathbb #1}}
\newcommand{\eps}{\varepsilon}
\newcommand{\<}{\langle}
\renewcommand{\>}{\rangle}
\newcommand{\p}{\partial}
\newcommand{\pfrac}[2]{\genfrac{}{}{}{1}{#1}{#2}}
\newcommand{\dd}{\displaystyle}
\newcommand{\Ln}{\mf L^{{\mbox{\rm \scriptsize Neu}}}}
\newcommand{\Lr}{\mf L^{{\mbox{\rm \scriptsize Rob}}}}
\newcommand{\Dn}{\mf D^{{\mbox{\rm \scriptsize Neu}}}}
\newcommand{\Dr}{\mf D^{{\mbox{\rm \scriptsize Rob}}}}
\newcommand{\AF}{\mathcal{A}_{\text{\rm Fried}}}
\newcommand{\DF}{\mf D_{\text{\rm Fried}}}
\newcommand{\Hn}{\mathscr{H}^{{\mbox{\rm \scriptsize Neu}}}}
\newcommand{\HF}{\mathscr{H}_{\text{\rm Fried}}}
\newcommand{\HrF}{\mathscr{H}_{\text{\rm Fried}}^{{\mbox{\rm \scriptsize Rob}}}}
\newcommand{\HnF}{\mathscr{H}_{\text{\rm Fried}}^{{\mbox{\rm \scriptsize Neu}}}}
\newcommand{\Gmenos}{\Gamma_{N,-}}
\newcommand{\Gmais}{\Gamma_{N,+}}
\newcommand{\Gmenosj}{\Gamma_{N,-}^j}
\newcommand{\GmenosjR}{\Gamma_{N,-}^{j,\text{\rm right}}}
\newcommand{\GmenosjL}{\Gamma_{N,-}^{j,\text{\rm left}}}
\newcommand{\Gmaisj}{\Gamma_{N,+}^j}
\newcommand{\GmaisjR}{\Gamma_{N,+}^{j,\text{\rm right}}}
\newcommand{\GmaisjL}{\Gamma_{N,+}^{j,\text{\rm left}}}
\newcommand{\Gmenosi}{\Gamma_{N,-}^i}
\newcommand{\uu}{{\bf u}}
\newcommand\topo[2]{\genfrac{}{}{0pt}{}{#1}{#2}}
\newcommand{\sobH}{L^2\big([0,T];\mathcal{H}^1(\bb T^d\backslash \partial\Lambda)\big)}
\def\centerarc[#1](#2)(#3:#4:#5){\draw[#1] ($(#2)+({#5*cos(#3)},{#5*sin(#3)})$) arc (#3:#4:#5);}
\let\oldtocsection=\tocsection
\let\oldtocsubsection=\tocsubsection
\let\oldtocsubsubsection=\tocsubsubsection
\renewcommand{\tocsection}[2]{\hspace{0em}\oldtocsection{#1}{#2}}
\renewcommand{\tocsubsection}[2]{\hspace{1em}\oldtocsubsection{#1}{#2}}
\renewcommand{\tocsubsubsection}[2]{\hspace{2em}\oldtocsubsubsection{#1}{#2}}
\DeclareRobustCommand{\SkipTocEntry}[5]{}
\keywords{Hydrodynamic limit, exclusion process, non-homogeneous environment, slow bonds}
\begin{document}

\title[Hydrodynamic Limit for the Exclusion Process with a Slow Membrane]{Hydrodynamic Limit for the SSEP\\ with a Slow Membrane}

\author[T. Franco]{Tertuliano Franco}
\address{UFBA\\
 Instituto de Matem\'atica, Campus de Ondina, Av. Adhemar de Barros, S/N. CEP 40170-110\\
Salvador, Brazil}
\curraddr{}
\email{tertu@ufba.br}
\thanks{}

\author[M. Tavares]{Mariana Tavares}
\address{UFBA\\
 Instituto de Matem\'atica, Campus de Ondina, Av. Adhemar de Barros, S/N. CEP 40170-110\\
Salvador, Brazil}
\curraddr{}
\email{tavaresaguiar57@gmail.com}
\thanks{}

\subjclass[2010]{60K35, 35K55}

\begin{abstract} In this paper we consider a symmetric simple exclusion process (SSEP) on the  $d$-dimensional discrete torus $\bb T^d_N$ with a spatial 
non-homogeneity given by a slow membrane. The slow membrane is  defined here as the boundary of a smooth simple connected region $\Lambda$ on the continuous $d$-dimensional torus $\bb T^d$. In this setting, bonds crossing the membrane have jump rate $\alpha/N^\beta$ and all other  bonds have jump rate one, where $\alpha>0$, $\beta\in[0,\infty]$, and $N\in \bb N$ is the scaling parameter. In the diffusive scaling we prove that the hydrodynamic limit presents a dynamical phase transition, that is, it depends on the regime of $\beta$.  For $\beta\in[0,1)$, the hydrodynamic equation is given by the usual heat equation on the continuous torus, meaning that the slow membrane has no effect in the limit. For $\beta\in(1,\infty]$, the hydrodynamic equation  is the heat equation with Neumann boundary conditions, meaning that the slow membrane $\p \Lambda$ divides $\bb T^d$ into two isolated regions $\Lambda$ and $\Lambda^\complement$. And for the critical value $\beta=1$, the hydrodynamic equation is the heat equation with certain Robin boundary conditions related to the Fick's Law. 
\end{abstract}

\maketitle

%\tableofcontents

\allowdisplaybreaks

\section{Introduction}\label{s1}
A central question of Statistical Mechanics is about how microscopic interactions determine the macroscopic behavior of a given system. Under this guideline, an entire area on scaling limits of interacting random particle systems has been developed, see \cite{kl} and references therein. 

In the last years, many attention has been given to scaling limits of (spatially) non-homogeneous interacting systems, see for instance \cite{Franco2010,fgn1} among many others. Such an attention is quite natural due to the fact that a  non-homogeneity may represent vast physical situations, as impurities, changing of density in the media etc. Among those interacting particles systems,  processes of \textit{exclusion type} have special importance: they are, at same time, mathematically tractable and have a physical interaction, leading to precise representation of many phenomena. Being more precise,  a random process is called of \textit{exclusion type} if it has the \textit{hard-core interaction}, that is, at most one particle is allowed per site of a given graph. The random evolution of the  system (in the symmetric case) can be described as follows: to  each edge of the given graph, a Poisson clock is associated, all of them independent. At a ring time of some clock, the occupation values for the vertexes of the  corresponding edge are interchanged.

In \cite{Franco2010}, a quite broad setting for the one-dimensional symmetric exclusion process (SEP) in non-homogeneous medium  has been considered, being obtained its hydrodynamic limit, that is, the law of large numbers for the time evolution of the spatial density of particles. The hydrodynamic equation there was given by a  PDE related to a  Krein-Feller operator. And in \cite{FARFAN2010}, the fluctuations for the same model were obtained.

The scenario for the SEP in non-homogeneous medium in dimension $d\geq 2$ up to now is far less understood.  In \cite{valentim2012}, a generalization of \cite{Franco2010} to the $d$-dimensional setting was reached. However, the definition of model there was  very specific to permit a reduction to the one-dimensional approach of \cite{Franco2010}.

In \cite{hld}, the hydrodynamic limit in the diffusive scaling for the following $d$-dimensional simple symmetric  exclusion process (SSEP) in non-homogeneous medium was proved, where the term \textit{simple} means that only jumps to nearest neighbors are allowed. The underlying graph is the discrete $d$-dimensional torus, and all bonds of the graph have rate one, except those laying over a $(d-1)$-dimensional closed surface, which have rate given by $N^{-1}$ times a constant depending on the angle between the edge and the normal vector to the surface, where $N$ is the scaling parameter. The hydrodynamic equation obtained  was given by a PDE related to  a $d$-dimensional Krein-Feller operator. Despite less broad in certain sense than the setting of \cite{valentim2012}, the model in \cite{hld} cannot be approached by one-dimensional techniques, being   truly $d$-dimensional. \vspace{-0.3cm}
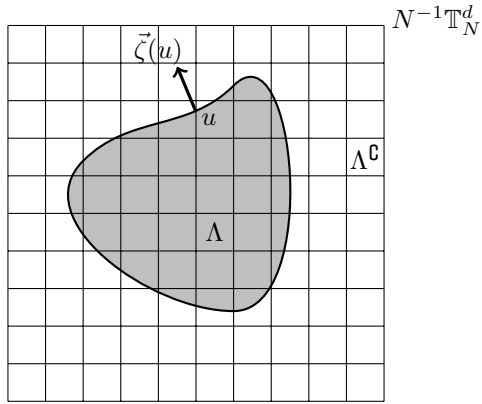
\begin{figure}[htb!]
\centering
\begin{tikzpicture}
\tikzstyle{ponto}=[fill=blue,circle,scale=.25]
\begin{scope}[yshift=0.2cm]
\draw[draw=black,thick, fill=lightgray] (1,2) to[out=45,in=225] (3,3) to[out=45,in=0] (3,0) to[out=180,in=225] (1,2);
\draw[very thick, ->] (2.5,2.65)--(2.25,3.25);
\draw (2,3.5) node{$\vec{\zeta}(u)$};
\draw (2.45,2.75) node[below right]{$u$};
\end{scope}

\draw (2.75,1.25) node {$\Lambda$};
\draw (4.75,2.25) node {$\Lambda^\complement$};

\draw[color=black,step=0.5cm] (0,-1) grid (5,4);
\draw (5.7,3.75) node[above]{$N^{-1}\bb T^d_N$};
\end{tikzpicture}
\caption{The  region in gray represents $\Lambda$, and the white region represents its complement $\Lambda^\complement$. The grid represents $N^{-1}\bb T_N^d$,  the discrete torus embedded on the continuous torus~$\bb T^d$. By $\vec{\zeta}(u)$ we denote the normal exterior unitary vector to $\Lambda$ at the point $u\in \p\Lambda$.}
\label{Fig1}
\end{figure}

In the present paper, we consider a $d$-dimensional model  close to the one in \cite{hld} and related to the \textit{slow bond phase transition behavior} of \cite{fgn1,fgn2,fgn3}. It is fixed a $(d-1)$-dimensional smooth surface $\p\Lambda$ in the continuous $d$-dimensional torus $\bb T^d$, see Figure~\ref{Fig1}. Edges have rates equal to one, except those intersecting $\p\Lambda$, which have rate $\alpha/N^\beta$, where $\alpha>0$, $\beta\in[0,\infty]$ and $N\in \bb N$ is the scaling parameter. Here we prove the hydrodynamic limit, which depends on the range of $\beta$, namely, if $\beta\in[0,1)$, $\beta=1$ or $\beta\in(1,\infty]$.

For $\beta\in[0,1)$, the hydrodynamic equation is given by the usual heat equation:
meaning that, in this regime, the slow bonds do not have any effect in the  continuum limit.
 For $\beta\in (1,\infty]$, the hydrodynamic equation is the  heat equation with the following Neumann boundary conditions over $\p\Lambda$:
\begin{equation*}
\frac{\partial\rho(t,u^{+})}{\partial\vec{\zeta}(u)}=\frac{\partial\rho(t,u^{-})}{\partial\vec{\zeta}(u)}=0,\qquad \forall\, t\geq0, \, u\in \p\Lambda, 
 \end{equation*}
where $\vec{\zeta}$ is the normal unitary vector to $\p\Lambda$. This means that, in this regime, the slow bonds are so strong that there no flux of mass through $\p\Lambda$ in the continuum, despite the existence of flux of particles in the discrete for each $N\in \bb N$. 
For the critical value $\beta=1$, the hydrodynamic equation is given by the  heat equation with the following Robin boundary conditions:
\begin{equation}\label{BRcond}
 \frac{\partial\rho(t,u^{+})}{\partial\vec{\zeta}(u)}=\frac{\partial\rho(t,u^{-})}{\partial\vec{\zeta}(u)}=\alpha\Big(\rho(t,u^{+})-\rho(t,u^{-})\Big)\sum_{j=1}^{d}|\<\vec{\zeta}(u),e_j\>|, \quad t\geq 0, \, u\in \p\Lambda\,, 
\end{equation}
where $u^-$ denotes the limit towards $u\in\p\Lambda$ through points over $\Lambda$ while
 $u^+$ denotes the limit towards $u\in\p\Lambda$ through points over $\Lambda^\complement$, and $\{e_1\ldots,e_d\}$ is the canonical basis of $\bb R^d$.
 
We observe that the Robin boundary condition  above is in agreement with the \textit{Fick's Law}: the spatial derivatives are equal due to the conservation of particles, representing the rate at which  the mass crosses the boundary. Such a  rate is proportional to the difference of concentration on each side of the boundary, being the 
diffusion coefficient through the boundary at a point $u\in \p\Lambda$  given by  $D(u)=\alpha\sum_{j=1}^{d}|\<\vec{\zeta}(u),e_j\>|$. Since $\vec{\zeta}(u)$ is a unitary vector,  the reader can  check via Lagrange multipliers that this diffusion coefficient satisfies
\begin{align*}
\alpha\;\leq \; D(u)\;\leq \; \alpha\sqrt{d}
\end{align*}
in dimension $d\geq 2$. Moreover, in this case $\beta=1$, the hydrodynamic equation  exhibits the phenomena of \textit{non-invariance for isometries}. Let us explain this notion. Consider an isometry $\mathbf{T}:\bb T^d\to\bb T^d$, an initial density profile $\rho_0:\bb T^d\to [0,1]$ and denote by $(S(t)\rho_0)(u)$ the solution of the usual heat equation with initial condition $\rho_0$. Then, 
$$\big(S(t)(\rho_0\circ \mathbf{T})\big)(u)\;=\; (S(t)\rho_0)\big(\mathbf{T}(u)\big)\,.$$
In other words, if we isometrically move the initial condition of the usual heat equation, the solution of the PDE under this new initial condition is the equal to the previous solution moved by the same isometry. On the other hand, as we can see in \eqref{BRcond}, the diffusion coefficient $D(u)$ depends on how the surface $\p\Lambda$ is positioned with respect to the canonical basis.  Hence the PDE for $\beta=1$ \textit{is not invariant for isometries}, differently from the cases $\beta\in[0,1)$ and $\beta\in(1,\infty]$. Note that the diffusion coefficient also says that the underlying graph plays a role in the limit.

Besides the dynamical phase transition itself, this work has the following features.
 First of all, in contrast with some previous works, the hydrodynamic equations are characterized as classical PDEs, with clear interpretation.   In the regime $\beta\in[0,1)$, the proof relies on a sharp replacement lemma which compares occupations of neighbor sites in opposite sides of $\p\Lambda$. For $\beta=1$, the proof is based on a precise analysis of the surface integrals and the model drops the \textit{ad hoc}  hypothesis adopted in \cite{hld}: here the rates for bonds crossing $\p\Lambda$ are all equal to $\alpha/N$, with no extra constant depending on the incident angle.  Finally, a remark the uniqueness of weak solutions for the cases $\beta=1$ and $\beta\in(1,\infty]$. Uniqueness of weak solutions are in general a delicate and technical issue, specially for dimension higher than one. In Proposition~\ref{prop72} we provide a general statement which leads to the  uniqueness of weak solutions in both cases $\beta=1$ and $\beta\in(1,\infty]$. The keystone of the proof is the notion of \textit{Friedrichs extension} for strongly monotone symmetric operators. The uniqueness statement has the feature of being simple, $d$-dimensional and easily adaptable to many contexts.  However, it is strictly limited to the uniqueness of weak solutions of parabolic linear PDEs with linear boundary conditions.
 
 The paper is divided as follows: In Section~\ref{s2} we state definitions and results. In Section~\ref{s3} we draw the strategy of proof for the hydrodynamic limit. In Section~\ref{s4} is reserved to the proof of tightness of the processes. In Section~\ref{s5} we prove the necessary replacement lemmas and energy estimates. In Section~\ref{s6} we characterize limit points as concentrated on weak solutions of the respective PDEs, and in Section~\ref{s7} we assure uniqueness of those weak solutions.

\section{Definitions and Results}\label{s2}

Let $\bb T^d$ be the continuous $d$-dimensional torus, which is $[0,1)^d$ with periodic boundary conditions, and let $\bb T^d_N$ be the discrete torus with $N^d$ points, which can naturally embedded in the continuous torus as $N^{-1} \bb T^d_N$, see Figure~\ref{Fig1}. We therefore will not 
distinguish notation for functions defined on $\bb{T}^d$ or  $ N^{-1} \bb T_{N}^d$.

By $\eta = (\eta(x))_{x \in \bb T_{N}^d}$  we denote configurations in the state space $\Omega_N = \{0,1\}^{\bb T_{N}^d}$, where $\eta(x)=0$ means that the site $x$ is empty, and $\eta(x)=1$ means that the site $x$ is occupied. 
By a \textit{symmetric simple exclusion process} we mean the 
Markov Process with configuration space $\Omega_N$ and exchange rates $\xi^{N}_{x,y} > 0$ for $x,y \in \bb T^d_N$ with $\Vert x-y\Vert_{1} = 1$. This process can be characterized in terms of the infinitesimal generator $\mathscr{L}_{N}$ acting on functions $f:\Omega_N \to \bb{R}$ as
\begin{equation*}\label{ln}
(\mathscr{L}_{N}f)(\eta)\;=\;\sum_{x\in \bb T_{N}^d}\sum_{j=1}^d \,\xi^{N}_{x,x+e_j}\,\Big[f(\eta^{x,x+e_j})-f(\eta)\Big]\,,
\end{equation*}
where $\{e_1,\ldots,e_d\}$ is the canonical basis of $\bb R^d$ and $\eta^{x,x+e_j}$ is the configuration obtained from $\eta$ by exchanging the occupation variables 
$\eta(x)$ and $\eta(x+e_j)$, that is,
\begin{equation*}
\eta^{x,x+e_j}(y)\;=\;\left\{\begin{array}{cl}
\eta(x+e_j),& \mbox{if}\,\,\, y=x\,,\\ 
\eta(x),& \mbox{if} \,\,\,y=x+e_j\,,\\ 
\eta(y),& \mbox{otherwise.}
\end{array}
\right.
\end{equation*}
The Bernoulli product measures $\{\nu^N_\theta\,:\,\theta \in [0,1]\}$ are invariant and in fact, reversible, for the symmetric nearest neighbor exclusion process introduced above. Namely, $\nu^N_\theta$ is a product measure on $\Omega_N$ whose marginal at site $x\in \bb T^d_N$ is given by
\begin{equation*}
\nu^N_\theta\{ \eta:\eta(x)=1\}\;=\;\theta\,.
\end{equation*}

Fix now  two parameters $\alpha>0$ and $\beta \in [0,\infty]$ and a simple connected closed region $\Lambda\subset\bb T^d$ whose  boundary $\p\Lambda$ is a smooth $(d-1)$-dimensional surface. 
The \textit{symmetric simple exclusion process with slow bonds over} $\p\Lambda$ (SSEP with slow bonds over $\p\Lambda$) we define now is the particular 
simple symmetric  exclusion process with exchange rates  given by
\begin{equation}
\label{rates}
\xi^N_{x,x+e_j}\;=\;\left\{\begin{array}{cl}
\dfrac{\alpha}{N^\beta}\,, &  \mbox{if }~\dfrac{x}{N}\in\Lambda\textrm{ and }
\frac{x+e_j}{N}\in\Lambda^\complement, \textrm{ or } \dfrac{x}{N}\in\Lambda^\complement \textrm{ and } \dfrac{x+e_j}{N}\in\Lambda,\medskip\\ 
1\,, &\mbox{otherwise,}
\end{array}
\right.
\end{equation}
for all $x \in \bb T^d_N$ and $j=1,\ldots,d$. That is, the \textit{slow bonds} of the process will be the bonds in $N^{-1}\bb T^{d}_{N}$ for which one of its vertices belongs to $\Lambda$ and the other one belongs to $\Lambda^\complement$. See Figure~\ref{Fig1} for an illustration.

Note that, when $\beta=\infty$, there are no crossings of particles through the boundary $\p\Lambda$. 
From now on, abusing of notation,  we will call the generator of the SSEP with slow bonds over $\p\Lambda$ by $\mathscr{L}_N$, being understood  that jump rates will be given by~\eqref{rates}.

Denote by $\{\eta_t : t\ge 0\}$ the Markov process with state space $\Omega_N$ and generator $N^2\mathscr{L}_N$, where the $N^2$ factor is the so-called \textit{diffusive scaling}. This Markov process depends on $N$, but it will not be indexed on it to not overload notation. Let $D(\bb R_+, \Omega_N)$ be the \textit{Skorohod space} of c\`adl\`ag trajectories taking values in $\Omega_N$. For a measure $\mu_N$ on $\Omega_N$, denote by $\bb P^N_{\mu_N}$ the probability measure on $D(\bb R_+, \Omega_N)$ induced by the initial state $\mu_N$ and the Markov process $\{\eta_t : t\ge 0\}$. Expectation with respect to $\bb P^N_{\mu_N}$ will be denoted by $\bb E^N_{\mu_N}$.

In the sequel, we present the partial differential equations governing the time evolution of the density profile for the different regimes of $\beta$, defining the notion of weak solution for each one of those equations.
Denote by $\rho_t$ a function $\rho(t, \cdot)$ and denote by $C^n(\bb T^d)$ the set of continuous functions from $\bb T^d$ to $\bb R$  with continuous derivatives of order up to $n$.  Let $\< \cdot , \cdot \>$ and $\| \cdot \|$ be the  inner product and norm in  $L^2(\bb T^d)$, that is,
\begin{equation}\label{inner}
\< f, g \> \;=\; \int_{\bb T^d} f(u)\, g(u)\, du\; \, \;\textrm{ and }\; \, 
\| f \| = \sqrt{ \< f, f \> } \; ,\quad \forall\, f,  g \in L^2(\bb T^d)\,.
\end{equation}
Fix once and for all a  measurable density profile $\rho_0: \bb T^d \rightarrow [0,1]$. Note that $\rho_0$ is bounded.
\begin{definition} \label{beta<1}
A bounded function $\rho : [0,T]\times \bb T^d \to \bb R$
is said to be a weak solution of the heat equation
\begin{equation}
\label{edpheat}
\left\{
\begin{array}{ll}
{\displaystyle \p_t \rho(t,u) \; =\; \Delta \rho(t,u)}, & t\geq0, \, u\in \bb T^d, \\
{\displaystyle \rho(0,u) \;=\; \rho_0(u)},&  u\in\bb T^d  .
\end{array}
\right.
\end{equation}
if, for all functions  $H\in C^2(\bb T^d)$ and all $t\in[0,T]$, the function $\rho(t, \cdot)$ satisfies the integral equation
\begin{equation*}\label{eqint1}
\< \rho_t, H\> \;-\; \< \rho_0 , H\> 
- \int_0^t \< \rho_s , \Delta H \> \, ds\; \;=\; 0\,.
\end{equation*}
\end{definition}

We recall next the definition of Sobolev Space from \cite{e}.
Let $U$ be an open set of $\bb R^d$ or $\bb T^d$.
The Sobolev Space $\mc H^{1}(U)$ consists of all locally summable functions $\kappa:U\rightarrow \bb R$ such that there exist functions $\p_{u_{j}}\kappa\in L^{2}(U)$, $j=1,\ldots, d$, satisfying $$\int_{\bb T^d} \p_{u_j}H(u)\kappa(u)\,du\;=\;-\int_{\bb T^d}H(u)\p_{u_{j}}\kappa(u)\,du$$
for all $H\in C^{\infty}(U)$ with compact support. Furthermore, for $\kappa\in \mc H^{1}(U)$, we define the norm 
$\Vert\kappa\Vert_{\mc H^1(U)}\;=\;\Big(\sum_{j=1}^d\int_U \big|\p_{u_{j}}\kappa\big|^2\, du\Big)^{1/2}$. Finally, we define the space $L^2([0,T], \mc H^{1}(U))$, which consists of all measurable functions $\tau:[0,T]\rightarrow \mc H^{1}(U)$ such that
$$\Vert\tau\Vert_{L^2([0,T], \mc H^1(U))}\;:=\; \Big(\int_0^T \Vert \tau_t\Vert^2_{\mc H^1(U)}\, dt\Big)^{1/2}\;<\;\infty\,.$$
 Note  that $U=\bb T^d\backslash \p\Lambda$ is an open subset of $\bb T^d$.\medskip

 The following  notation will be used several times along the text.
Given a function $f:\bb T^d\backslash \p \Lambda\to \bb R$ and $u\in \p\Lambda$, we denote
\begin{equation}\label{maismenos}
f(u^+)\;:=\;\lim_{\topo{v\to u}{v\in \Lambda^\complement}}f(v)\quad \text{ and } f(u^-)\;:=\;\lim_{\topo{v\to u}{v\in \Lambda}}f(v)\,,
\end{equation}
that is, $f(u^+)$ is the limit of $f(v)$ as $v$ approaches $u\in \p\Lambda$ through \textit{the complement of} $\Lambda$, while  $f(u^-)$ is the limit of $f(v)$ as $v$ approaches $u\in \p\Lambda$ through~$\Lambda$. Let  $\mathbf{1}_A$ be the indicator function of a set $A$, that is, $\mathbf{1}_A(a)=1$ if $a\in A$ and zero otherwise.  Denote by $\vec{\zeta}(u)$ the normal unitary exterior vector to the region  $\Lambda$ at the point $u\in\p\Lambda$ and by $\p/\p \vec{\zeta}$ the directional derivative with respect to $\vec{\zeta}(u)$.  

Below, by $\<\vec{u},\vec{v}\>$ we denote the canonical inner product of two vectors $\vec{u}$ and $\vec{v}$ in $\bb R^d$, which shall  not be  misunderstood with the inner product in $L^2(\bb T^d)$ as defined in  \eqref{inner}. By $dS$ we indicate a surface integral.

\begin{definition} \label{beta=1}
A bounded function $\rho : [0,T]\times \bb T^d \to \bb R$
is said to be a weak solution of the following heat equation with Robin boundary conditions
\begin{equation}
\label{edp12}
\left\{
\begin{array}{ll}
{\displaystyle \p_t \rho(t,u) \; =\; \Delta \rho(t,u)}, &\hspace{-0.2cm}  t\geq0, \, u\in \bb T^d, \\
{\displaystyle \frac{\partial\rho(t,u^{+})}{\partial\vec{\zeta}(u)}=\frac{\partial\rho(t,u^{-})}{\partial\vec{\zeta}(u)}=\alpha\Big(\rho(t,u^{+})-\rho(t,u^{-})\Big)\sum_{j=1}^{d}|\<\vec{\zeta}(u),e_j\>|}, &\hspace{-0.2cm} t\geq0, \, u\in \p\Lambda, \\
{\displaystyle \rho(0,u) \;=\; \rho_0(u)}, &\hspace{-0.2cm} u\in\bb T^d \, .
\end{array}
\right.
\end{equation}
 if $\rho\in L^2([0,T], \mc H^1(\bb T^d\backslash \p\Lambda))$ and, for all functions  $H=h_1\mathbf{1}_{\Lambda}+h_2\mathbf{1}_{\Lambda^\complement}$ with $h_1, h_2 \in C^2(\bb T^d)$ and for all $t\in[0,T]$,  the following  the integral equation holds:
\begin{equation*}
\begin{split}
&\< \rho_t, H\> - \< \rho_0 , H\> - \int_0^t\! \< \rho_s , \Delta H \> \, ds-\int_0^t\!\int_{\p\Lambda}\!\rho_s(u^+)\sum_{j=1}^d\p_{u_j} H(u^+)\<\vec{\zeta}(u),e_j\>\,dS(u)ds
\\
&
+\int_0^t\int_{\p\Lambda}\!\rho_s(u^-)\sum_{j=1}^d\p_{u_j} H(u^-)\<\vec{\zeta}(u),e_j\>\,dS(u)ds\\
&+\int_0^t\int_{\p\Lambda}\!\alpha\,(\rho_s(u^-)-\rho_s(u^+))(H(u^+)-H(u^-))\Big(\sum_{j=1}^{d}|\<\vec{\zeta}(u),e_j\>|\Big)\,dS(u)ds\;=\; 0\,.
\end{split}
\end{equation*}
\end{definition}

The reader should note that the function $H$ is (possibly) discontinuous at the boundary $\p\Lambda$. Note also that the expression $\sum_{j=1}^d\p_{u_j} H(u^\pm)\<\vec{\zeta}(u),e_j\>$ appearing in the integral equation above is nothing but $\p H (u^\pm)/\p \vec{\zeta}$ due to  linearity of the directional derivative. 

\begin{definition} \label{beta>1}
A bounded function $\rho : [0,T]\times \bb T^d \to \bb R$
is said to be a weak solution of the heat equation with Neumann boundary conditions
\begin{equation}
\label{edpbc}
\left\{
\begin{array}{ll}
{\displaystyle \p_t \rho(t,u) \; =\; \Delta\rho(t,u)},& t\geq0, \, u\in \bb T^d,\smallskip \\
\displaystyle \frac{\partial\rho(t,u^{+})}{\partial\vec{\zeta}(u)}=\frac{\partial\rho(t,u^{-})}{\partial\vec{\zeta}(u)}=0, & t\geq0, \, u\in \p\Lambda, \smallskip\\
{\displaystyle \rho(0,u) \;=\; \rho_0(u)}, & u\in\bb T^d \,,
\end{array}
\right.
\end{equation}
if $\rho\in L^2([0,T], \mc H^1(\bb T^d\backslash \p\Lambda))$ and, for all functions  $H=h_1\mathbf{1}_{\Lambda}+h_2\mathbf{1}_{\Lambda^\complement}$ with $h_1, h_2 \in C^2(\bb T^d)$ and for all $t\in[0,T]$, the following  integral equation holds:
\begin{equation*}\label{eqint3}
\begin{split}
&\< \rho_t, H\> \;-\; \< \rho_0 , H\> - \!\int_0^t \!\< \rho_s , \Delta H \> \, ds-\int_0^t\!\!\int_{\p\Lambda}\!\rho_s(u^+)\sum_{j=1}^d\p_{u_j} H(u^+)\<\vec{\zeta}(u),e_j\>\,dS(u)ds\\
&+\int_0^t\!\!\int_{\p\Lambda}\rho_s(u^-)\sum_{j=1}^d\p_{u_j} H(u^-)\<\vec{\zeta}(u),e_j\>\,dS(u)ds\;=\;0\,.
\end{split}
\end{equation*}
\end{definition}
Since in Definitions \ref{beta=1} and \ref{beta>1} we impose $\rho \in L^2([0,T], \mc H^1(\bb T^d\backslash\p\Lambda))$, the integrals above are well-defined on the boundary due to the notion of trace in Sobolev spaces, see \cite{e} on the subject. We clarify that the notion of weak solutions above have been defined in the standard way of Analysis: the reader can check that a strong solution of \eqref{edpheat}, \eqref{edp12} or \eqref{edpbc} is indeed a weak solution of the respective PDE.

Fix a  measurable density profile $\rho_0:\bb T^d\rightarrow [0,1]$. For each $N\in\bb N$, let $\mu_N$ be a probability measure on $\Omega_N$. A sequence of probability measures $\{\mu_N \,: \,N\geq 1 \}$ is said to be \textit{associated to a profile} 
$\rho_0 :\bb T^d \to [0,1]$ if, for every $\delta>0$ and every continuous function $H:\bb T^d\rightarrow \bb R$ the following limit holds:
\begin{equation}
\label{profile}
\lim_{N\to\infty}
\mu_N \Bigg\{ \, \Bigg| \frac {1}{N^d}\!\! \sum_{x\in\bb T_N^d} H(x/N) \eta(x)
- \int H(u) \rho_0(u) du \Bigg| > \delta \Bigg\} \;=\; 0\,.
\end{equation}

Below, we establish the main result of this  paper, the hydrodynamic limit for the exclusion process with slow bonds, which depends on the regime of $\beta$.

\begin{theorem}
\label{t01}
Fix $\beta\in[0,\infty]$. Consider the exclusion process with slow bonds over $\p\Lambda$ with rate $\alpha N^{-\beta}$ at each one of these slow bonds. Fix a Borel measurable initial profile $\rho_0 : \bb T^d \to [0,1]$ and consider a sequence of probability measures $\{\mu_N\}_{N\in\bb N}$ on $\Omega_N$ associated to $\rho_0$ in the sense of \eqref{profile}. Then, for each $t\in[0,T]$,
\begin{equation*}
\lim_{N\to\infty} \bb P^N_{\mu_N} \Bigg[\,\eta\,: \, \Bigg| \,\frac{1}{N^d} \sum_{x\in\bb T^d_N} H(x/N)\, \eta_t(x) - \int_{\bb{T}^d} H(u)\, \rho(t,u) du \,\Bigg| \,>\, \delta\, \Bigg] \;=\; 0 \, ,
\end{equation*}
for every $\delta>0$ and every function $H\in C(\bb{T}^d)$ where: \smallskip
\begin{itemize}
	\item If $\beta \in [0,1)$, then $\rho$ is the unique weak solution of \eqref{edpheat}.\smallskip
	\item If $\beta=1$, then $\rho$ is the unique weak solution of \eqref{edp12}.
	\smallskip
	\item If $\beta \in (1,\infty]$, then $\rho$ is the unique weak solution of \eqref{edpbc}.
\end{itemize}
\end{theorem}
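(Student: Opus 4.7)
Following the standard entropy method scheme (cf.\ \cite{kl}), the proof proceeds by tightness, characterization of limit points, and uniqueness of weak solutions. Introduce the empirical measure
\[
\pi^N_t(du) \;=\; \frac{1}{N^d} \sum_{x \in \bb T^d_N} \eta_t(x)\, \delta_{x/N}(du),
\]
viewed as a process in the Skorohod space of c\`adl\`ag trajectories on positive Radon measures of $\bb T^d$. For a test function $H$ adapted to the regime (smooth on $\bb T^d$ when $\beta<1$, and of the form $H = h_1 \mathbf{1}_\Lambda + h_2 \mathbf{1}_{\Lambda^\complement}$ with $h_i \in C^2(\bb T^d)$ when $\beta\geq 1$), Dynkin's formula gives the martingale
\[
M^{N,H}_t \;=\; \<\pi^N_t, H\> - \<\pi^N_0, H\> - \int_0^t N^2 \mathscr{L}_N \<\pi^N_s, H\>\, ds.
\]
Tightness via Aldous' criterion reduces to uniform control of $\<M^{N,H}\>_t$ and of the time integral above; both are straightforward since the jump rates are bounded by~$1$ (Section~\ref{s4}).

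The substantive step is the characterization of limit points. A direct computation splits $N^2 \mathscr{L}_N \<\pi^N_s, H\>$ into a bulk discrete-Laplacian term $\<\pi^N_s, \Delta_N H\>$ plus a correction localized on the slow bonds,
\[
\frac{1}{N^{d}} \sum_{(x,x+e_j)\text{ slow}} N^2(\alpha N^{-\beta} - 1)\bigl[H(\tfrac{x+e_j}{N}) - H(\tfrac{x}{N})\bigr]\bigl[\eta_s(x+e_j)-\eta_s(x)\bigr].
\]
For $\beta\in[0,1)$ with smooth $H$, this correction is of order $N^{-\beta}$ and vanishes in the limit; the replacement lemma of Section~\ref{s5} then absorbs the remaining boundary discrepancies so that the limit satisfies \eqref{edpheat}. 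For $\beta=1$, choosing $H$ discontinuous across $\p\Lambda$ and performing discrete summation-by-parts produces the bulk term $\<\rho_s, \Delta H\>$ together with the two surface integrals $\int_{\p\Lambda}\rho_s(u^\pm)\,\p H(u^\pm)/\p \vec\zeta\, dS$, while the slow-bond correction, scaling as $\alpha N^{1-d}$ per bond across $O(N^{d-1})$ bonds, converges to
\[
\alpha \int_{\p\Lambda} \bigl(\rho_s(u^-) - \rho_s(u^+)\bigr)\bigl(H(u^+) - H(u^-)\bigr) \sum_{j=1}^d |\<\vec\zeta(u), e_j\>|\, dS,
\]
the factor $\sum_j |\<\vec\zeta(u), e_j\>|$ reflecting the asymptotic density of slow bonds of direction $e_j$ piercing a unit patch of $\p\Lambda$, which is exactly the source of the non-invariance under isometries. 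For $\beta\in(1,\infty]$ this contribution carries a vanishing $N^{1-\beta}$ prefactor and disappears, leaving the weak formulation of Definition~\ref{beta>1}. The $L^2([0,T], \mc H^1(\bb T^d\setminus \p\Lambda))$-regularity of $\rho$, needed to make sense of the traces on $\p\Lambda$, comes from the energy estimate of Section~\ref{s5}.

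The principal obstacle is the Riemann-sum identification of the discrete slow-bond sum with the surface integral weighted by $\sum_j |\<\vec\zeta(u), e_j\>|$: geometrically, one must show that the number of slow bonds in direction $e_j$ crossing a small patch of $\p\Lambda$ near $u$ is, to leading order, $N^{d-1} |\<\vec\zeta(u), e_j\>|$ times the area of the patch, which requires a local straightening of the smooth hypersurface $\p\Lambda$ and uniform control over how axis-aligned lattice segments intersect it. Once limit points are shown to be concentrated on weak solutions of the appropriate PDE, Proposition~\ref{prop72}, based on the Friedrichs extension of the associated strongly monotone symmetric operator, yields uniqueness; this upgrades convergence in distribution to convergence in probability and produces the stated convergence of empirical averages for all $H\in C(\bb T^d)$.
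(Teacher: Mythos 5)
Your proposal follows essentially the same route as the paper: Dynkin's formula for the empirical measure, tightness from the bounded rates, a bulk/slow-bond decomposition of $N^2\mathscr{L}_N\<\pi^N_s,H\>$, replacement lemmas and energy estimates to identify the limit points with weak solutions of \eqref{edpheat}, \eqref{edp12} or \eqref{edpbc} (including the Riemann-sum identification of the slow-bond sums with surface integrals weighted by $\sum_j|\<\vec\zeta(u),e_j\>|$), and uniqueness via the Friedrichs extension of Proposition~\ref{prop72}. This is precisely the architecture of Sections~\ref{s4}--\ref{s7}.

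One accounting slip is worth correcting. For $\beta\in[0,1)$ the slow-bond correction you display is \emph{not} of order $N^{-\beta}$: per slow bond the factor is $N^2(\alpha N^{-\beta}-1)\bigl[H(\tfrac{x+e_j}{N})-H(\tfrac{x}{N})\bigr]=O(N^{1-\beta})+O(N)$, and after summing over the $O(N^{d-1})$ slow bonds and dividing by $N^d$ only the piece carrying the prefactor $\alpha N^{-\beta}$ is negligible (and only for $\beta>0$); the piece coming from the subtracted rate $1$ is genuinely $O(1)$, of the form $\frac{1}{N^{d-1}}\sum_{e\text{ slow}}\p_{u_j}H(\tfrac{x}{N})\bigl(\eta_s(x+e_j)-\eta_s(x)\bigr)$. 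It is exactly this $O(1)$ term that Lemma~\ref{replacement} must annihilate, by replacing $\eta_s(x)$ and $\eta_s(x+e_j)$ with local averages over boxes straddling the membrane and using that the Dirichlet-form cost of crossing a slow bond, $N^{\beta-1}/\alpha$, still vanishes when $\beta<1$. Since you do invoke the replacement lemma for the ``remaining boundary discrepancies,'' the plan is sound, but as written the claim that the correction already vanishes is inconsistent with there being anything left for the replacement lemma to absorb.
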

The assumption that $\Lambda$ is  simple  and connected may be dropped, being imposed only for the sake of clarity. Otherwise,  notation would be highly overloaded. 

\section{Scaling Limit and Proof's Structure}\label{s3}
Let $\mc M$ be the space of positive Radon measures on $\bb T^d$ with total mass bounded by one, endowed with the weak topology. Let $\pi_t^N\in\mc M$ the empirical measure at time $t$ associated to $\eta_t$, it is a measure on $\bb T^d$ obtained rescaling space by $N$: 
\begin{equation*}
\pi_t^N(du)\;=\;\pi_t^N(\eta_t, du)\;:=\;\frac{1}{N^d}\sum_{x\in\bb T^d_N}\eta_t(x)\delta_{x/N}(du)\,,
\end{equation*}
where $\delta_u$ denotes the Dirac measure concentrated on $u\in \bb T^d$.
For a measurable function $H:\bb T^d\rightarrow\bb R$ which is $\pi$-integrable, denote by $\<\pi_t^N, H\>$ the integral of $H$ with respect to $\pi_t^N$:
\begin{equation*}
\<\pi_t^N, H\>\; =\; \frac{1}{N^d}\sum_{x\in\bb T^d_N}H\left(\pfrac{x}{N}\right)\eta_t(x)\,.
\end{equation*}
Note that this notation $\<\cdot,\cdot\>$ is also used  as the inner product of $L^2(\bb T^d)$. Fix once and for all a time horizon $T>0$. Let $D([0,T], \mc M)$ be the space of $\mc M$-valued
\textit{c\`adl\`ag} trajectories $\pi:[0,T]\to\mc M$ endowed with the
\emph{Skorohod} topology. Then, the $\mc M$-valued process $\{\pi^N_t:t\ge 0\}$
is a random element of $D([0,T], \mc M)$  determined by  $\{\eta_t : t\ge 0\}$. For each probability measure $\mu_N$ on $\Omega_N$, denote by $\bb Q_{\mu_N}^{\beta,N}$ the distribution of $\{\pi^N_t:t\ge 0\}$ on the path space $D([0,T], \mc M)$, when $\eta_0^N$ has distribution $\mu_N$. 

Fix a continuous Borel measurable profile $\rho_0 : \bb T^d \to [0,1]$ and consider a sequence $\{\mu_N:N\geq1\}$ of measures on $\Omega_N$ associated to $\rho_0$. Let $\bb Q^\beta$ be the probability measure on $D([0,T], \mc M)$ concentrated 
on the deterministic path  $\pi(t,du) = \rho (t,u)du$, where:\smallskip
\begin{itemize}
	\item if $\beta \in [0,1)$, then $\rho$ is the unique weak solution of \eqref{edpheat},\smallskip
	\item if $\beta=1$, then $\rho$ is the unique weak solution of \eqref{edp12},
	\smallskip
	\item if $\beta \in (1,\infty]$, then $\rho$ is the unique weak solution of \eqref{edpbc}.
\end{itemize}

\begin{proposition}
\label{s15} For any $\beta\in[0,\infty]$, the sequence of probability measures $\bb Q_{\mu_N}^{\beta,N}$ converges weakly to $\bb Q^{\beta}$ as $N$ goes to infinity.
\end{proposition}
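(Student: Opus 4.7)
The plan is to follow the classical three-step program for hydrodynamic limits: prove tightness of $\{\bb Q_{\mu_N}^{\beta,N}\}_{N\in\bb N}$, show that any limit point is concentrated on trajectories of the form $\pi_t(du)=\rho(t,u)du$ with $\rho$ a weak solution of the PDE corresponding to the given regime of $\beta$, and finally invoke the uniqueness of weak solutions (which follows from Proposition~\ref{prop72} referenced in the introduction). Tightness is handled in Section~\ref{s4} and will be assumed here; since particles are bounded and $\pi^N_t$ has total mass at most one, the restriction to measures absolutely continuous with respect to Lebesgue is immediate on any limit point.

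The workhorse is Dynkin's martingale decomposition: for an appropriate test function $H$,
\begin{equation*}
M^{N,H}_t \;=\; \<\pi^N_t,H\>-\<\pi^N_0,H\>-\int_0^t N^2\mathscr{L}_N\<\pi^N_s,H\>\,ds
\end{equation*}
is a mean-zero martingale, and a standard quadratic variation estimate (using the boundedness of $\eta$ and the $N^{-d}$ factor in $\pi^N$) shows that $\sup_{t\in[0,T]}|M^{N,H}_t|\to 0$ in $L^2(\bb P_{\mu_N}^N)$. Thus the identification of limit points reduces to computing the integrand $N^2\mathscr{L}_N\<\pi^N_s,H\>$ asymptotically. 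For $\beta\in[0,1)$ one chooses $H\in C^2(\bb T^d)$; the action of $N^2\mathscr{L}_N$ on $\<\pi^N,H\>$ decomposes into a bulk term, which by a discrete integration by parts tends to $\<\pi^N,\Delta H\>$, and a boundary correction coming from the slow bonds. The latter has magnitude $O(N^{d-1}\cdot N^{2-\beta}\cdot N^{-d})=O(N^{1-\beta})$ per unit of gradient, which would diverge; here is where the sharp replacement lemma from Section~\ref{s5} enters, allowing one to replace the difference of occupation variables across $\p\Lambda$ by something negligible, so that the slow-bond contribution vanishes and the resulting integral equation is exactly that of Definition~\ref{beta<1}.

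For $\beta\in(1,\infty]$ and $\beta=1$, the right class of test functions is $H=h_1\mathbf 1_\Lambda+h_2\mathbf 1_{\Lambda^\complement}$ with $h_1,h_2\in C^2(\bb T^d)$. Applying $N^2\mathscr{L}_N$ and rearranging, the bulk contribution yields $\<\pi^N_s,\Delta H\>$ (where $\Delta H$ is understood in the smooth pieces $\Lambda$ and $\Lambda^\complement$), while the edges crossing $\p\Lambda$ produce two boundary sums: one of order $N^{1-\beta}$ involving $(h_1-h_2)(\eta(y)-\eta(x))\alpha\sum|\<\vec\zeta,e_j\>|$, and one of order $N$ involving the surface derivatives of $H$ from each side. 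For $\beta>1$ the first vanishes in the limit, leaving only the surface-derivative terms of Definition~\ref{beta>1}; the replacement lemmas and the energy estimate from Section~\ref{s5} upgrade the limit to live in $L^2([0,T],\mc H^1(\bb T^d\setminus\p\Lambda))$ so the boundary traces in the weak formulation make sense. For the critical $\beta=1$, the boundary sum is of order one and, after a careful enumeration of the bonds crossing a small piece of $\p\Lambda$ which are oriented along $e_j$, its Riemann-sum limit is precisely
\begin{equation*}
\int_0^t\!\int_{\p\Lambda}\alpha\bigl(\rho_s(u^-)-\rho_s(u^+)\bigr)\bigl(H(u^+)-H(u^-)\bigr)\sum_{j=1}^d|\<\vec\zeta(u),e_j\>|\,dS(u)\,ds\,,
\end{equation*}
giving the Robin integral equation of Definition~\ref{beta=1}.

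I expect the main obstacle to be the $\beta=1$ case. It requires two delicate inputs: (i) a quantitative replacement of $\eta(x)$ by its average on a small box (so the discrete traces at $\p\Lambda$ converge to the $L^2$-traces of the limiting profile), and (ii) a geometric counting argument that relates the number of slow bonds crossing a mesoscopic surface element in direction $e_j$ to the area element weighted by $|\<\vec\zeta(u),e_j\>|$; this factor is exactly the source of the non-isometric diffusion coefficient $D(u)$ discussed in the introduction. Once this identification is complete, Proposition~\ref{prop72} delivers uniqueness of the weak solution in all three regimes, so the limit measure is the Dirac mass $\bb Q^\beta$ and the convergence of $\bb Q_{\mu_N}^{\beta,N}$ to $\bb Q^\beta$ follows from tightness combined with the uniqueness of the limit point.
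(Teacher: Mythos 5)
Your proposal follows essentially the same route as the paper, whose proof of this proposition is precisely the program you describe: tightness (Section~\ref{s4}), the martingale decomposition \eqref{M} plus the replacement lemmas and energy estimates (Section~\ref{s5}) to identify the limit points as weak solutions in each regime of $\beta$ (Section~\ref{s6}), and uniqueness via the Friedrichs extension in Proposition~\ref{prop72} (Section~\ref{s7}). All the key ingredients you single out --- the vanishing quadratic variation, the replacement of occupation differences across $\p\Lambda$ for $\beta\in[0,1)$, the energy estimate making the traces meaningful, the geometric counting producing the weight $\sum_j|\<\vec\zeta(u),e_j\>|$, and the spectral argument for uniqueness --- are exactly those used in the paper.
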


The proof of this result is divided into three parts. In the next section, we show that tightness of  the sequence $\{\bb Q_{\mu_N}^{\beta,N}: N\geq1\}$. In Section~\ref{s5}, we prove a suitable \textit{Replacement Lemma} for each regime of $\beta$, which will be crucial in the task of characterizing limit points. In Section~\ref{s6} we  characterize the limit points of the sequence for each regime of the parameter $\beta$. Finally, the uniqueness of weak solutions is presented in Section \ref{s7} and this implies the uniqueness of limit points of the sequence $\{\bb Q_{\mu_N}^{\beta,N}: N\geq1\}$. 

Finally, we note that Theorem \ref{t01} is a consequence of Proposition \ref{s15}. Actually, since $\bb Q_{\mu_N}^{\beta,N}$ weakly converges  to $\bb Q^{\beta}$ for all continuous functions $H:\bb T^d\rightarrow \bb R$, it follows that the path $\{\<\pi_t^N, H\>:\, 0\leq t\leq T\}$ converges in distribution to $\{\<\pi_t, H\>:\, 0\leq t\leq T\}$. Since $\{\<\pi_t, H\>:\, 0\leq t\leq T\}$  is a deterministic  path, convergence in distribution is equivalent to convergence in probability. Therefore,
\begin{align*}
&\lim_{N\to\infty} \bb P^N_{\mu_N} \Bigg\{ \, \Bigg\vert \frac{1}{N^d} \sum_{x\in\bb T_N^d} H(x/N)\, \eta_t(x) - \int_{\bb{T}^d} H(u) \rho(t,u) du \Bigg\vert 
> \delta \Bigg\}\\
&= \lim_{N\to\infty} \bb Q_{\mu_N}^{\beta,N}\big\{|\<\pi_t^N, H\>-\<\pi_t, H\>|>\delta\big\}\;=\; 0\,, 
\end{align*}
for all $\delta>0$ and  $0\leq t \leq T$.  This gives the strategy of proof for the hydrodynamic limit. Next, we make some general observations.\medskip

Since particles in the exclusion process evolve independently as a nearest neighbor random walk, except for exclusion rule, the exclusion process with slow bonds over $\p\Lambda$ is related to the random walk on $N^{-1}\bb T^d_N$ that describes the evolution of the system with a single particle. To be used throughout the paper we introduce the generator of the random walk described above, which is 
\begin{equation} \label{bbLN}
\bb L_N H\big(\pfrac{x}{N}\big) = \sum_{j=1}^d \Big\{ \xi^N_{x,x+e_j} \, \Big[ H\big(\pfrac{x+e_j}{N}\big) 
- H\big(\pfrac{x}{N}\big) \Big] + \xi^N_{x,x-e_j} \, \Big[H\big(\pfrac{x-e_j}{N}\big) - H\big(\pfrac{x}{N}\big) \Big] \Big\}
\end{equation}
for every $H: N^{-1} \bb T^d_N \rightarrow \mathbb{R}$ and every $x \in \bb T^d_N$. Above, it is understood that $\xi_{x\pm e_j,x}=\xi_{x,x\pm e_j}$.
 By Dynkin's formula (see A.1.5.1 in \cite{kl}),
\begin{equation*}
M^{N}_{t}(H)\;=\;\<\pi^{N}_{t}, H\>- \<\pi^{N}_{0}, H\>-\int_{0}^{t}N^{2} \mathscr{L}_N\<\pi^{N}_{s},H\>ds
\end{equation*}
is a martingale with respect to the natural filtration $\mc F_t:=\sigma(\eta_s^N\,:\, s\leq t)$. By some elementary calculations,
\begin{equation*}
N^2\mathscr{L}_N\<\pi_s^N,H\>\;=\;\frac{1}{N^{d-2}}\sum_{x\in\bb T^d_N}\eta_s(x)\bb L_N H\Big(\frac{x}{N}\Big)\;=\;\<\pi_s^N,N^2\bb L_N H\>\,,
\end{equation*}
hence the martingale can be rewritten as
\begin{equation}\label{M}
M^{N}_{t}(H)\;=\;\<\pi^{N}_{t}, H\>- \<\pi^{N}_{0}, H\>-\int_{0}^{t}\<\pi^{N}_{s},N^2\bb L_N H\>ds\,.
\end{equation}
Note that this observation stands for any jump rates. The particular form of jump rates for the  SSEP with slow bonds over $\p\Lambda$ will play a role when  characterizing limit points and proving replacement lemmas.

\section{Tightness} \label{s4}
This section  deals with the issue of tightness for the sequence  $\{\bb Q_{\mu_N}^{\beta,N}: N\geq1\}$ of probability measures on $D([0,T], \mc M)$.

\begin{proposition}
\label{tight}
For any fixed $\beta\in[0,\infty]$, the sequence of measures $\{\bb Q_{\mu_N}^{\beta,N}: N\geq1\}$ is tight in the Skorohod topology of $D([0,T], \mc M)$.
\end{proposition}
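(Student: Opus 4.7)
The plan is to reduce tightness of the measure-valued processes to tightness of the real-valued processes $\{\<\pi^N_\cdot,H\>\}_N$ for $H$ ranging over a dense subclass of $C(\bb T^d)$. Since $\mc M$ endowed with the weak topology is compact, Proposition~1.7 of Chapter~4 in \cite{kl} reduces tightness in $D([0,T],\mc M)$ to tightness of $\{\<\pi^N_\cdot,H\>\}_N$ in $D([0,T],\bb R)$ for $H$ in a dense class, for which I would take $H\in C^2(\bb T^d)$. On this class I verify Aldous' criterion, starting from the Dynkin decomposition \eqref{M}: for any stopping time $\tau\leq T$ and $0\leq\gamma\leq\theta$,
\[
\<\pi^N_{\tau+\gamma},H\>-\<\pi^N_\tau,H\>=\int_\tau^{\tau+\gamma}\<\pi^N_s,N^2\bb L_N H\>\,ds+\bigl(M^N_{\tau+\gamma}(H)-M^N_\tau(H)\bigr).
\]

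The first step is a deterministic bound $\sup_{N,s,\eta}|\<\pi^N_s,N^2\bb L_N H\>|\leq C(H,\alpha)$. At sites $x$ not adjacent to any slow bond a Taylor expansion yields $N^2\bb L_N H(x/N)=\Delta H(x/N)+o(1)$, which is uniformly bounded. At the $O(N^{d-1})$ sites adjacent to $\p\Lambda$ the pointwise value $N^2\bb L_N H(x/N)$ may be as large as $O(N)$, because the first-order increments no longer cancel when one edge has rate $1$ and the opposite one has rate $\alpha/N^\beta$; however, after averaging against the $N^{-d}$ prefactor these boundary sites contribute at most $O(N^{d-1}\cdot N\cdot N^{-d})=O(1)$. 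Consequently $|\int_\tau^{\tau+\gamma}\<\pi^N_s,N^2\bb L_N H\>\,ds|\leq C\theta$.

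The second step controls the martingale through its predictable quadratic variation. A standard exclusion computation gives
\[
\<M^N(H)\>_t=\int_0^t\frac{1}{N^{2d-2}}\sum_{x,j}\xi^N_{x,x+e_j}\bigl(H((x+e_j)/N)-H(x/N)\bigr)^2\bigl(\eta_s(x+e_j)-\eta_s(x)\bigr)^2\,ds,
\]
and the bounds $\xi^N_{x,x+e_j}\leq 1$ together with $(H((x+e_j)/N)-H(x/N))^2\leq C/N^2$ yield $\bb E^N_{\mu_N}[\<M^N(H)\>_T]=O(N^{-d})$. By optional sampling, $\bb E^N_{\mu_N}[(M^N_{\tau+\gamma}(H)-M^N_\tau(H))^2]=O(N^{-d})$ uniformly in $\tau,\gamma\leq T$. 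Combining the drift and martingale estimates with Markov's inequality verifies Aldous' criterion and hence tightness.

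The main obstacle is the first step: because the pointwise norm $\|N^2\bb L_N H\|_\infty$ does not remain bounded in $N$ along the slow-bond skeleton, a crude $L^\infty$ estimate on $N^2\bb L_N H$ fails. The essential observation is that the bad sites form a set of surface-type cardinality $O(N^{d-1})$ which, weighted by the volume factor $N^{-d}$, contributes only $O(1)$ to the drift. The whole argument is uniform in $\beta\in[0,\infty]$, since the slow rate $\alpha N^{-\beta}$ never exceeds the fast rate $1$.
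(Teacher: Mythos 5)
Your proposal is correct and follows essentially the same route as the paper: the reduction via \cite[Proposition~1.7, Chapter~4]{kl} to the real-valued processes $\<\pi^N_\cdot,H\>$ for $H\in C^2(\bb T^d)$, the uniform $O(1)$ bound on the drift obtained by noting that the $O(N^{d-1})$ sites adjacent to $\p\Lambda$ contribute $O(N^{d-1}\cdot N\cdot N^{-d})$ after the $N^{-d}$ averaging, and the $O(N^{-d})$ estimate on the quadratic variation; the paper merely packages the conclusion via the Lipschitz modulus of the integral term together with Doob's inequality instead of Aldous' criterion. One trivial quibble: since the slow rate is $\alpha N^{-\beta}$, for $\beta=0$ and $\alpha>1$ it does exceed $1$, so the bound $\xi^N_{x,x+e_j}\leq 1$ should read $\xi^N_{x,x+e_j}\leq \max(1,\alpha)$, which only changes a constant.
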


\begin{proof}
In order to prove tightness of $\{\pi_t^N:0\leq t\leq T\}$, it is enough to show tightness of the real-valued process $\{\<\pi_t^N,H\>:0\leq t \leq T\}$ for $H\in C(\bb T^d)$. In fact, (cf. Proposition 1.7, chapter 4 of \cite{kl}) it is enough to show tightness of $\{\<\pi_t^N,H\>:0\leq t \leq T\}$ in $D([0,T],\bb R)$ for a dense set of functions in $C(\bb T^d)$ with respect to the uniform topology. 

For that purpose, fix $H\in C^2(\bb T^d)$. Since the sum of tight processes is tight, in order to prove tightness of $\{\<\pi_t^N, H\>: N\geq1\}$, it is enough to assure  tightness of each term in \eqref{M}. 
The quadratic variation of $M_t^N(H)$ is given by 
\begin{equation} \label{quamar0}
\<M^{N}(H)\>_t=\!\int_{0}^{t}\sum_{j=1}^d\sum_{x\in\bb T_{N}^d} \frac{\xi^{N}_{x,x+e_j}}{N^{2d-2}}\Big[(\eta_{s}(x)-\eta_{s}(x+e_j))(H(\pfrac{x+e_j}{N})-H(\pfrac{x}{N}))\Big]^{2} ds,
\end{equation}
implying that
\begin{equation} \label{quamar}
\<M^{N}(H)\>_t \;\leq\;  \frac{\alpha t}{N^{d}} \, \sum_{j=1}^d\|\p_{u_j}H\|^2_{\infty} \, ,
\end{equation}
where $\|H\|_{\infty}:= \sup_{u\in \bb T^d}|H(u)|$, hence $M^{N}_{t}$ converges to zero as $N\rightarrow\infty$ in $L^{2}(\bb P^\beta_{\mu_N})$. Therefore, by Doob's inequality, for every $\delta>0$,
\begin{equation}\label{limmart}
\lim_{N\rightarrow\infty} \bb P^N_{\mu_N}\Big[\sup_{0\leq t\leq T}|M_t^N(H)|>\delta\Big]\;=\;0\,,
\end{equation}
which  implies tightness of the sequence of martingales $\{M_t^N(H)\,:\,N\geq1\}$. 
Next, we  will prove  tightness for the integral term in \eqref{M}.
Let $\Gamma_N$ be the set of vertices in $\bb T^d_N$ having some incident edge with exchange rate not equal to one, that is,
\begin{align}\label{GammaN}
\Gamma_N=\Big\{x\in \bb T_N^d: \text{ for some } j=1,\ldots,d, \quad\xi^N_{x,x+e_j}=\frac{\alpha}{N^\beta}\text{ or } \xi^N_{x,x-e_j}=\frac{\alpha}{N^\beta}\Big\}.
\end{align}
 The term $\<\pi^{N}_{s},N^2\bb L_N H\>$ appearing inside the time integral in \eqref{M} can be then written as 
\begin{align*}
&\frac{1}{N^{d}}\sum_{j=1}^d\sum_{x\notin \Gamma_N}\eta_{s}(x)N^2\Big[H(\pfrac{x+e_j}{N})+H(\pfrac{x-e_j}{N})-2H(\pfrac{x}{N})\Big] \nonumber\\
&+  \frac{1}{N^{d-1}}\sum_{j=1}^d\sum_{x\in \Gamma_N}\!\eta_{s}(x)\Big[\xi^{N}_{x,x+e_j}N\big(H(\pfrac{x+e_j}{N})\!-\! H(\pfrac{x}{N})\big)\!+\!\xi^{N}_{x,x-e_j}
N\big(H(\pfrac{x-e_j}{N})\!-\!H(\pfrac{x}{N})\big)\!\Big]\label{qqq} 
\end{align*}
since $\xi_{x,x+e_j}=\xi_{x+e_j,x}=1$ for every $x\notin{\Gamma_N}$.
By a Taylor expansion on $H\in C^2(\bb T^d)$, the absolute value of the summand  in the first double sum above is bounded by  $\|\Delta H\|_{\infty}$. Since there are $\mc O(N^{d-1})$ elements in $\Gamma_N$, and
$\xi_{x,x+e_j}\leq \alpha$,  the absolute value of summand in second double sum above is bounded by 
$\sum_{j=1}^d \alpha \Vert\p_{u_j}H\Vert_\infty$.
Therefore, there exists $C>0$, depending only on $H$, such that $|N^2\bb L_N\<\pi_s^N, H\>|\leq C$, which yields
\begin{equation*}
\left|\int_s^t N^2\bb L_N\<\pi_s^N, H\> dr\right|\;\leq\; C|t-s| \,.
\end{equation*}
By \cite[Proposition 4.1.6]{kl}, last inequality implies tightness of the integral term,   concluding the proof of the proposition. 
\end{proof}

\section{Replacement Lemma and Energy Estimates}\label{s5}

This section gives a fundamental result that allow us to replace a mean occupation of a site by the mean density of particles in a small macroscopic box around this site. We start by introducing some tools to be used in the sequel. 

Denote by $H_N(\mu_N|\nu_\theta)$ the relative entropy of $\mu_N$ with respect to the invariant state $\nu_\theta$. For a precise definition and properties of the entropy, we refer the reader to \cite{kl}.  Assuming $0<\theta<1$, the formula  in   \cite[Theorem A1.8.3]{kl}  assures the existence a finite constant $\kappa_0=\kappa_0(\theta)$ such that 
\begin{equation} \label{cte}
H_N(\mu_N|\nu_\theta)\; \leq \; \kappa_0 N^d
\end{equation}
for any probability measure $\mu_N$ on $\{0,1\}^{\bb T^d_N}$.  Denote by $\mf D_N$ the Dirichlet form of the process, which is the functional acting on functions $f:\{0,1\}^{\bb T^d_N}\to \bb R$ as
\begin{equation}\label{Dirichlet}
 \mf D_N(f) \,:=\,\<f,- \mathscr{L}_N f\>_{\nu_\theta} =\sum_{j=1}^d\sum_{x\in \bb T^d_N}\!\frac{\xi^N_{x,x+e_j}}{2}\!\!\int{\!\left({f(\eta^{x,x+e_j})}-{f(\eta)}\right)^2\nu_{\theta}(d\eta)}\,.
\end{equation}
In the sequence,  we will make use of the functional $\mf D_N(\sqrt{f})$, where $f$ is a probability density with respect to $\nu_\theta$.  

\subsection{Replacement Lemma for \texorpdfstring{$\beta\in[0,1)$}{beta<1}}
Below, we define the local density of particles, which corresponds a to the mean occupation in a box around a given site. Abusing of notation, we denote by $\eps N-1$ the integer part of  $\eps N-1$.  For $\beta\in[0,1)$, we define the local mean by
\begin{equation}\label{localmean}
\eta^{\varepsilon N}(x)\;=\;\frac{1}{(\eps N)^d}\sum_{j_1, j_2, \ldots, j_d=0}^{\varepsilon N-1}\eta\left(x+j_1e_1+\ldots+j_de_d\right)\,.
\end{equation}
Note that the sum on the right hand side of above may contain sites in and out of $\Lambda$ in the sense that $x/N\in \Lambda$ or $x/N\in \Lambda^\complement$. By $\mc O(f(N))$ we will mean a  function bounded in modulus by a constant times $f(N)$.
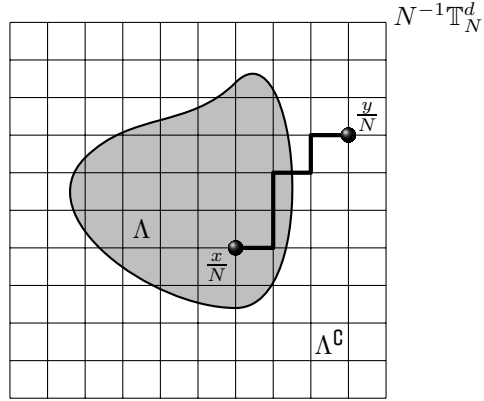
\begin{figure}[htb!]
\centering
\begin{tikzpicture}
\tikzstyle{ponto}=[fill=blue,circle,scale=.25]

\begin{scope}[yshift=0.2cm]
\draw[draw=black,thick, fill=lightgray] (1,2) to[out=45,in=225] (3,3) to[out=45,in=0] (3,0) to[out=180,in=225] (1,2);
\end{scope}

\draw (1.75,1.25) node {$\Lambda$};
\draw (4.25,-0.25) node {$\Lambda^\complement$};

\draw[color=black,step=0.5cm] (0,-1) grid (5,4);
\draw (5.7,3.75) node[above]{$N^{-1}\bb T^d_N$};

\draw[ultra thick,line join=round] (3,1)--(3.5,1)--(3.5,2)--(4,2)--(4,2.5)--(4.5,2.5);

\shade[ball color=black](3,1) circle (0.1);
\shade[ball color=black](4.5,2.5) circle (0.1);

\draw (2.75,0.75) node {$\frac{x}{N}$};
\draw (4.75,2.75) node {$\frac{y}{N}$};
\end{tikzpicture}
\caption{Illustration (in dimension $2$) of a polygonal path joining the sites $x$ and $y=x+j_1e_1+j_2 e_2$, with $j_1=j_2=3$. Note the embedding in the continuous torus $\bb T^d$.}
\label{Fig2}
\end{figure}

\begin{lemma}\label{r1}
Fix $\beta\in [0,1)$. Let $f$ be a density with respect to the invariant measure $\nu_\theta$, $\lambda_N:\bb T_N^d\to\bb R$ a function such that $\Vert \lambda_N\Vert_\infty\leq M<\infty$ and $\gamma>0$. Then, 
\begin{align*}
&\int \gamma N \sum_{x\in \Gamma_N}\lambda_N(x)\big\{\eta(x)-\eta^{\varepsilon N}(x)\big\}f(\eta)\nu_\theta(d\eta)\\
&\leq\; \frac{\gamma^2 M^2 \mc O(N^{d})}{2}\Big(\frac{N^{\beta-1}}{\alpha}+d\eps\Big)+N^2\mf D_N(\sqrt{f}) \, .
\end{align*}
\end{lemma}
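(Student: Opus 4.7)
The strategy is the classical path-expansion / Dirichlet-form argument for replacement lemmas (cf.~\cite[Ch.~5]{kl}), adapted to handle the slow bonds.

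The first step is to expand
$$\eta(x)-\eta^{\eps N}(x) \;=\; \frac{1}{(\eps N)^d}\sum_{\vec j \in \{0,\dots,\eps N-1\}^d}\bigl(\eta(x)-\eta(x+\vec j)\bigr),$$
and, for each pair $(x,\vec j)$, to fix the canonical lexicographic nearest-neighbour path $\Pi(x,\vec j)$ in $\bb T^d_N$ from $x$ to $x+\vec j$, traversing $j_1$ bonds in direction $e_1$, then $j_2$ in $e_2$, and so on (see Figure~\ref{Fig2}). Each such path has length at most $d\eps N$ and, thanks to the smoothness of $\p\Lambda$, contains only $\mc O(1)$ slow bonds, since any straight segment of $\bb T^d$-length $\le\eps$ in a fixed canonical direction crosses $\p\Lambda$ a bounded number of times. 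Telescoping then writes $\eta(x)-\eta(x+\vec j)$ as a sum of one-bond increments $\eta(z_b)-\eta(z_b+e_{k_b})$ over bonds $b\in\Pi(x,\vec j)$.

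For each bond $b=(z,z+e_k)$, the reversibility of $\nu_\theta$ under the exchange $\eta\mapsto\eta^b$, combined with the factorisation $f-f^b=(\sqrt f-\sqrt{f^b})(\sqrt f+\sqrt{f^b})$ and Young's inequality, yields, for any $A_b>0$, the textbook bound
$$\int\bigl(\eta(z)-\eta(z+e_k)\bigr)\,f\,d\nu_\theta \;\le\; \frac{1}{4A_b}\int\bigl(\sqrt f-\sqrt{f^b}\bigr)^2 d\nu_\theta \;+\; A_b.$$
I would insert this into the telescoped double sum and exchange summations, so that each bond $b$ appears with combinatorial multiplicity $N(b):=\#\{(x,\vec j):b\in\Pi(x,\vec j)\}$. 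Choosing the constants $A_b$ proportional to $N(b)/(N^2\xi_b^N)$, with two different proportionality constants for slow and fast bonds, makes the Dirichlet part of the bound exactly $N^2\mf D_N(\sqrt f)$, while the residual error collapses to a single expression of the form
$$\frac{\gamma^2 M^2}{(\eps N)^{2d}}\,\sum_{b}\frac{N(b)^2}{\xi_b^N}.$$

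To conclude, split this sum into slow bonds ($\xi_b^N=\alpha N^{-\beta}$) and fast bonds ($\xi_b^N=1$). Using $|\Gamma_N|=\mc O(N^{d-1})$, that each canonical path contains at most $d\eps N$ bonds in total and $\mc O(1)$ of them slow, a routine combinatorial count shows that the slow-bond contribution to the displayed sum is of order $(\eps N)^{2d}\cdot N^d\cdot N^{\beta-1}/\alpha$ and the fast-bond contribution is of order $(\eps N)^{2d}\cdot N^d\cdot d\eps$. After multiplying by the prefactor $\gamma^2M^2/[2(\eps N)^{2d}]$, these produce exactly the two summands in the stated right-hand side.

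\textbf{Main obstacle.} The subtle point is the bond-counting step, specifically controlling $\sum N(b)^2/\xi_b^N$ separately over slow and fast bonds. The announced $N^{\beta-1}/\alpha$ contribution is sharp precisely because slow bonds, while carrying the large weight $1/\xi_b^N=N^\beta/\alpha$, appear only $\mc O(1)$ times per canonical path; this bounded per-path count is where the smoothness of $\p\Lambda$ is used in an essential way, as without it the slow-bond multiplicity per path could grow with $N$ and the lemma would fail in its sharp form.
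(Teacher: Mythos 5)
Your proposal follows essentially the same route as the paper's proof: telescoping $\eta(x)-\eta^{\eps N}(x)$ along canonical polygonal paths of length at most $d\eps N$, applying reversibility of $\nu_\theta$ together with Young's inequality to split each one-bond increment into a Dirichlet-form piece and a residual weighted by $1/\xi^N_b$, and then using that each path meets $\mc O(1)$ slow bonds while $|\Gamma_N|=\mc O(N^{d-1})$ to obtain the two summands $N^{\beta-1}/\alpha$ and $d\eps$. The only (immaterial) difference is bookkeeping: you tune a constant $A_b$ per bond via the multiplicity $N(b)$, whereas the paper keeps a single constant $A$ and fixes $A=\gamma N^{-1}/2$ at the end; both land on the stated bound.
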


\begin{proof}
By the definition  \eqref{localmean} of  local mean $\eta^{\varepsilon N}(x)$, 
\begin{align} 
&\int  \lambda_N(x)\Big\{\eta(x)-\eta^{\varepsilon N}(x)\Big\}f(\eta)\nu_\theta(d\eta)\;=\;\nonumber\\
&= \int\lambda_N(x)\frac{1}{\varepsilon^d N^d}\sum_{j_1, \ldots, j_d=0}^{\varepsilon N-1}\Big\{\eta(x)-\eta(x+j_1e_1+\ldots+j_de_d)
\Big\}f(\eta)\nu_\theta(d\eta)\,.\label{eq51}
\end{align}
The next step is to write  $\eta(x)-\eta(x+j_1e_1+\cdots+j_d e_d)$ as a telescopic sum:
\begin{equation*}
\eta(x)-\eta(x+j_1e_1+\ldots+j_d e_d)\; =\; \sum_{\ell=1}^{j_1+\cdots+j_d}
\eta(a_{\ell-1})-\eta(a_{\ell})\,,
\end{equation*}
where $a_0=x$, $a_{j_1+\cdots+j_\ell}=x+j_1e_1+\cdots+j_d e_d$, and  $\Vert a_{\ell-1}-a_\ell\Vert_1=1$ for any $\ell=1,\ldots, j_1+\cdots+j_d$. Note that the path $a_0,a_1,\ldots,a_{j_1+\cdots+j_\ell}$ depends on the initial point $x$ and the final point $x+j_1e_1+\cdots+j_d e_d$. See Figure~\ref{Fig2} for an illustration and keep in mind that the length of this path is bounded by $d\eps N$.
Inserting the previous equality  into \eqref{eq51}, we get
\begin{align*}
& \int\lambda_N(x)\frac{1}{(\varepsilon N)^d}\sum_{j_1, \ldots, j_d=0}^{\varepsilon N-1}\Big\{\sum_{\ell=1}^{j_1+\cdots+j_d}
\eta(a_{\ell-1})-\eta(a_{\ell})\Big\}f(\eta) \, \nu_\theta(d\eta)\,. 
\end{align*}
Rewriting the expression above as twice the half and performing the transformation $\eta\mapsto  \eta^{a_{\ell-1},a_\ell}$ for which the probability measure $\nu_\theta$ is invariant, expression above becomes: 
\begin{equation*} \label{eq555}
\frac{1}{2(\varepsilon N)^d}\sum_{j_1, \ldots, j_d=0}^{\varepsilon N-1}\sum_{\ell=1}^{j_1+\cdots+j_d}\int\lambda_N(x) \left(\eta(a_{\ell-1})-\eta(a_\ell)\right)\left(f\left(\eta^{a_\ell, a_{\ell-1}}\right)-f\left(\eta\right)\right) \, d\nu_\theta \, .
\end{equation*}
Since  $ab=\sqrt{c}a\pfrac{b}{\sqrt{c}}\leq\pfrac{1}{2}ca^2+\pfrac{1}{2}\pfrac{b^2}{c}$, which holds for any $c>0$, the previous expression is smaller or equal  than
\begin{equation*}
\begin{split}
& \frac{1}{2(\varepsilon N)^d} \sum_{j_1, \ldots, j_d=0}^{\varepsilon N-1}\sum_{\ell=1}^{j_1+\cdots+j_d}\Bigg[\frac{\xi^N_{a_{\ell-1}, a_{\ell}}}{2A}\int\left(\sqrt{f\left(\eta^{a_\ell, a_{\ell-1}}\right)}-\sqrt{f\left(\eta\right)}\right)^2 d\nu_\theta\\
&+\frac{A}{2\xi^N_{a_{\ell-1}, a_\ell}}\int \lambda_N^2(x) \left(\eta(a_\ell)-\eta(a_{\ell-1})\right)^2\Big(\sqrt{f\left(\eta^{a_\ell, a_{\ell-1}}\right)}+\sqrt{f\left(\eta\right)}\Big)^2  d\nu_\theta\Bigg]\,.
\end{split}
\end{equation*}
Summing over $x\in \Gamma_N$, we can bound the last expression by
\begin{equation*}
\begin{split}
& \frac{1}{2(\varepsilon N)^d} \sum_{x\in\Gamma_N}\sum_{j_1, \ldots, j_d=0}^{\varepsilon N-1}\sum_{\ell=1}^{j_1+\cdots+j_d}\Bigg[\frac{\xi^N_{a_{\ell-1}, a_{\ell}}}{2A}\int\left(\sqrt{f\left(\eta^{a_\ell, a_{\ell-1}}\right)}-\sqrt{f\left(\eta\right)}\right)^2 d\nu_\theta\\
&+\sum_{x\in\Gamma_N}\frac{A}{2\xi^N_{a_{\ell-1}, a_\ell}}\int \lambda_N^2(x) \left(\eta(a_\ell)-\eta(a_{\ell-1})\right)^2\Big(\sqrt{f\left(\eta^{a_\ell, a_{\ell-1}}\right)}+\sqrt{f\left(\eta\right)}\Big)^2  d\nu_\theta\Bigg]\,.
\end{split}
\end{equation*}
Recalling \eqref{Dirichlet}, we can bound the first parcel in the sum above by
\begin{align*}
&\frac{1}{2(\varepsilon N)^d}\sum_{j_1, \ldots, j_d=0}^{\varepsilon N-1}\frac{1}{A}\mf D_N(\sqrt{f}) \;=\; \frac{1}{2A}\mf D_N(\sqrt{f}) \,.
\end{align*} 
Since $f$ is a density and  $|\lambda_N(x)|\leq M$, the second parcel is bounded by
\begin{align*} \label{eeee}
\begin{split}
&\frac{1}{2(\varepsilon N)^d}\sum_{x\in\Gamma_N}\sum_{j_1, \ldots, j_d=0}^{\varepsilon N-1}\sum_{\ell=1}^{j_1+\cdots+j_d}  \frac{A}{2}\cdot \frac{4M^2}{\xi^N_{a_{\ell-1}, a_\ell}}\\
&\leq\;\frac{1}{(\varepsilon N)^d}\sum_{j_1, \ldots, j_d=0}^{\varepsilon N-1}AM^2\mc O(N^{d-1})\Big(\frac{N^\beta}{\alpha}+d\varepsilon N\Big)\\
&=\; AM^2\mc O(N^{d-1})\Big(\frac{N^\beta}{\alpha}+d\varepsilon N\Big) \,.
\end{split}
\end{align*}
Up to here we have achieved that 
\begin{align*}
&\int \sum_{x\in \Gamma_N}\lambda_N(x)\big\{\eta(x)-\eta^{\varepsilon N}(x)\big\}f(\eta)\nu_\theta(d\eta)\\
&\;\leq\; AM^2 \mc O(N^{d-1})\Big(\frac{N^\beta}{\alpha}+d\varepsilon N\Big)+\frac{1}{2A}\mf D_N(\sqrt{f}) \, .
\end{align*}
We point out that the quantity of sites on $\Gamma_N$ is of order $\mc O(N^{d-1}$), which is  a consequence of the fact that $\p\Lambda$ is a smooth surface of dimension $d-1$. Then, multiplying the inequality above by $\gamma N$ gives us 
\begin{align*}
&\int \gamma N\sum_{x\in \Gamma_N}\lambda_N(x)\big\{\eta(x)-\eta^{\varepsilon N}(x)\big\}f(\eta)\nu_\theta(d\eta)\\
& \;\leq\; A\gamma \mc O(N^{d}) M^2\Big[\frac{N^\beta}{\alpha}+d\varepsilon N\Big]+\frac{\gamma N}{2A}\mf D_N(\sqrt{f}) \,. 
\end{align*}
Now choosing $A=\gamma N^{-1}/2$ the proof ends. 
\end{proof}

Recall the definition of $\Gamma_N$ in \eqref{GammaN}.

\begin{lemma}[Replacement lemma] \label{replacement} 
Fix $\beta\in [0,1)$. Let $\lambda_N:\bb T_N^d\to\bb R$ be a   sequence of  functions such that $\Vert \lambda_N\Vert_\infty\leq M<\infty$. Then,
\begin{equation*}
\overline{\lim_{\varepsilon\rightarrow0}} \varlimsup_{N\rightarrow\infty}  \bb E^\beta_{\mu_N}\Big[\,\Big|\int_0^t \frac{1}{N^{d-1}} \sum_{x\in\Gamma_N} \lambda_N(x) \{\eta_s^{\varepsilon N}(x)-\eta_s(x)\}\,ds\,\Big|\,\Big]\;=\;0 \, .
\end{equation*}
\end{lemma}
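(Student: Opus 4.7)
The plan is to follow the standard entropy/Feynman--Kac machinery and reduce the statement to Lemma~\ref{r1}. The first step is to apply the entropy inequality: for any $A>0$,
\begin{equation*}
\bb E^\beta_{\mu_N}\Big[\Big|\int_0^t V_N(\eta_s)\,ds\Big|\Big]
\;\leq\;\frac{H_N(\mu_N|\nu_\theta)+\log 2}{A}+\frac{1}{A}\log \bb E_{\nu_\theta}\!\Big[e^{A\left|\int_0^t V_N(\eta_s)\,ds\right|}\Big]\,,
\end{equation*}
where $V_N(\eta)=\frac{1}{N^{d-1}}\sum_{x\in\Gamma_N}\lambda_N(x)\{\eta^{\eps N}(x)-\eta(x)\}$. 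Using $e^{|x|}\leq e^x+e^{-x}$ and $\log(a+b)\leq \log 2+\max\{\log a,\log b\}$, it suffices to estimate the two exponential moments with $V_N$ and $-V_N$ separately. Since $\|{-\lambda_N}\|_\infty \leq M$ as well, both are handled by the same argument.

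Next I would invoke the Feynman--Kac inequality (see e.g.\ Lemma A1.7.2 of \cite{kl}) to obtain
\begin{equation*}
\frac{1}{At}\log \bb E_{\nu_\theta}\!\Big[e^{A\int_0^t V_N(\eta_s)\,ds}\Big]
\;\leq\;\sup_{f}\Big\{\int V_N(\eta)f(\eta)\,\nu_\theta(d\eta)-\frac{N^2}{A}\mf D_N(\sqrt f)\Big\}\,,
\end{equation*}
where the supremum runs over densities $f$ with respect to $\nu_\theta$. The crucial choice is $A=\gamma N^d$ for an arbitrary $\gamma>0$, since then the quantity inside the supremum becomes precisely
\begin{equation*}
\int \gamma N\!\sum_{x\in\Gamma_N}\lambda_N(x)\{\eta^{\eps N}(x)-\eta(x)\}f(\eta)\,\nu_\theta(d\eta)\;-\;N^2\mf D_N(\sqrt f)\,,
\end{equation*}
which is exactly the object controlled by Lemma~\ref{r1} (applied to $-\lambda_N$). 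The lemma shows the supremum is at most $\frac{\gamma^2 M^2 \mc O(N^d)}{2}\big(\frac{N^{\beta-1}}{\alpha}+d\eps\big)$.

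Combining everything with the entropy bound \eqref{cte}, $H_N(\mu_N|\nu_\theta)\leq \kappa_0 N^d$, the left-hand side of the replacement lemma is dominated by
\begin{equation*}
\frac{\kappa_0+N^{-d}\log 2}{\gamma}\;+\;\frac{t\gamma M^2\mc O(1)}{2}\Big(\frac{N^{\beta-1}}{\alpha}+d\eps\Big)\,.
\end{equation*}
Since $\beta<1$, $N^{\beta-1}\to 0$ as $N\to\infty$, so first letting $N\to\infty$ and then $\eps\to 0$ yields the bound $\kappa_0/\gamma$. Finally, since $\gamma>0$ was arbitrary, letting $\gamma\to\infty$ concludes the proof.

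The only delicate point is Lemma~\ref{r1} itself (already established): the replacement of a site occupation by a box average across the surface $\p\Lambda$ requires telescoping through a polygonal path that potentially crosses slow bonds, and the Dirichlet cost along such a path contributes the factor $N^\beta/\alpha$. The reason the scheme succeeds in the regime $\beta\in[0,1)$ is precisely that this cost, when divided by the $N$ arising from the scaling $A=\gamma N^d$ versus the $N^{d-1}$ normalisation in front of the sum over $\Gamma_N$, produces the harmless factor $N^{\beta-1}$; this is exactly where the threshold $\beta=1$ enters and where a different replacement argument would be required for $\beta\geq 1$.
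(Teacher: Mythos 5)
Your proposal is correct and follows essentially the same route as the paper: entropy inequality with the scaling $A=\gamma N^d$, splitting the absolute value via $e^{|x|}\leq e^x+e^{-x}$, Feynman--Kac to reduce to the variational expression $\Phi_N^1$, and then Lemma~\ref{r1} (applied to $\pm\lambda_N$) to bound it, with the limits taken in the order $N\to\infty$, $\eps\to 0$, $\gamma\to\infty$. The concluding remark about where the threshold $\beta=1$ enters matches the role of the $N^{\beta-1}/\alpha$ term in the paper's estimate.
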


\begin{proof}
Using the variational formula for entropy, for any $\gamma\in\bb R$ (which will be chosen large \textit{a posteriori}), 
\begin{align}
&\bb E^\beta_{\mu_N}\Big[\,\Big|\int_0^t\frac{1}{N^{d-1}} \sum_{x\in\Gamma_N}\lambda_N(x)\{\eta_s(x)-\eta_s^{\varepsilon N}(x)\}ds\Big|\,\Big] \notag \\
&=\frac{1}{\gamma N^{d}}\bb E^\beta_{\mu_N}\Big[\gamma N\,\Big|\int_0^t\sum_{x\in\Gamma_N}\lambda_N(x)\{\eta_s(x)-\eta_s^{\varepsilon N}(x)\}ds\Big|\Big] \notag \\
& \label{ent} \leq  \frac{H_N(\mu_N|\nu_\theta)}{ \gamma N^d}  +   \frac{1}{\gamma N^d} \log \bb E_{\nu_\theta}\Big[\exp\Big(\gamma N\Big|\int_0^t\sum_{x\in\Gamma_N}\lambda_N(x)\{\eta_s(x)-\eta_s^{\varepsilon N}(x)\}ds\Big|\Big)\Big] . 
\end{align}  
By the estimate \eqref{cte} on the entropy,  the first parcel of above is negligible  as $N\rightarrow\infty$ since we will  choose $\gamma$ arbitrarily large. Therefore, we can focus on the second parcel. 
Using that  $e^{|x|}\leq e^x+e^{-x}$ and 
\begin{equation}\label{A.2}
\varlimsup_{N\rightarrow\infty}\frac{1}{N^d}\log (a_N+b_N)\;=\;\max \Big\{\varlimsup_{N\rightarrow\infty}\frac{1}{N^d}\log a_N, \, \varlimsup_{N\rightarrow\infty}\frac{1}{N^d} \log b_N\Big\}
\end{equation}
for any sequences $a_N,b_N>0$, one can see that the second parcel  on the right hand  side of \eqref{ent} is less than or equal to the sum of 
\begin{equation}\label{eq57b}
\varlimsup_{N\rightarrow\infty}\frac{1}{ \gamma N^d}\log\Big\{ \bb E_{\nu_\theta}\Big[\exp\Big(\gamma N\int_0^t\sum_{x\in\Gamma_N}\lambda_N(x)\{\eta_s(x)-\eta_s^{\varepsilon N}(x)\}ds\Big)\Big]\Big\}
\end{equation}
and 
\begin{equation}\label{eq58b}
\varlimsup_{N\rightarrow\infty}\frac{1}{ \gamma N^d}\log\Big\{ \bb E_{\nu_\theta}\Big[\exp\Big(-\gamma N\int_0^t\sum_{x\in\Gamma_N}\lambda_N(x)\{\eta_s(x)-\eta_s^{\varepsilon N}(x)\}ds\Big)\Big]\Big\} \, . 
\end{equation}
We handle  only \eqref{eq57b}, being \eqref{eq58b} analogous.
By Feynman-Kac's formula, see \cite[Appendix 1, Lemma 7.2]{kl}, expression \eqref{eq57b} is bounded by
\begin{equation*}
\varlimsup_{N\rightarrow\infty}\frac{1}{\gamma N^d} \log\Big\{ \exp\Big(\int_0^t \Phi_N\, ds\Big)\Big\}\;=\; \varlimsup_{N\rightarrow\infty}\frac{t\,\Phi_N^1 }{\gamma N^{d}}\, ,
\end{equation*}
where 
\begin{align*}
\Phi_N^1 
&=\sup_{f \ \textrm{density}}\left\{\int\gamma N\sum_{x\in\Gamma_N}\lambda_N(x)\{\eta(x)-\eta^{\varepsilon N}(x)\}f(\eta)\nu_\theta(d\eta)-N^2\mf D_N(\sqrt{f})\right\}\,.
\end{align*}
Applying Lemma \ref{r1} finishes  the proof.  \end{proof}

\subsection{Replacement Lemma for \texorpdfstring{$\beta\in [1,\infty]$}{beta>1}} \label{subsec5.2}
Here, some additional notation  is required. The idea is actually very simple: the local mean shall be over a region avoiding slow bonds. Let $B_N[x,\ell]\subset \bb T^d_N$ be the discrete box centered on $x\in \bb T^d_N$ which edge has size $2\ell$, that is,
$B_N[x,\ell]= \{y\in \bb T^d_N: \Vert y-x\Vert_\infty\leq \ell\}$,
where we have  written $\Vert \cdot \Vert_\infty$ for the supremum norm on $\bb T^d_N$, that is, $\Vert (x_1,\ldots,x_d)\Vert_\infty = \max\big\{|x_1|\wedge |N-x_1|,\ldots,|x_d|\wedge |N-x_d|\big\}$. 
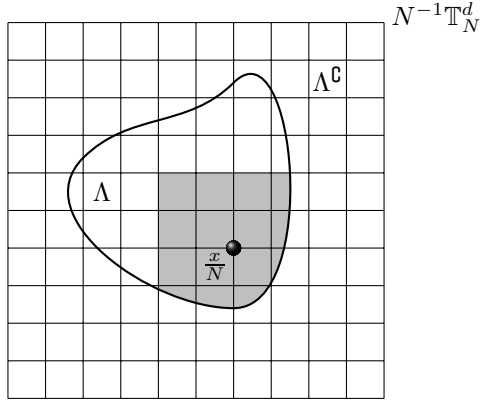
\begin{figure}[H]
\centering
\begin{tikzpicture}
\begin{scope}
  \clip (1,2.2) to[out=45,in=225] (3,3.2) to[out=45,in=0] (3,0.2) to[out=180,in=225] (1,2.2);
  \fill[fill=lightgray] (2,0) rectangle (4,2);
\end{scope}
\begin{scope}[yshift=0.2cm]
\draw[draw=black,thick] (1,2) to[out=45,in=225] (3,3) to[out=45,in=0] (3,0) to[out=180,in=225] (1,2);
\end{scope}
\draw (1.25,1.75) node {$\Lambda$};
\draw (4.25,3.25) node {$\Lambda^\complement$};
\draw[color=black,step=0.5cm] (0,-1) grid (5,4);
\draw (5.7,3.75) node[above]{$N^{-1}\bb T^d_N$};
\shade[ball color=black](3,1) circle (0.1);
\draw (2.75,0.75) node {$\frac{x}{N}$};
\end{tikzpicture}
\caption{Illustration in dimension two of  $C_N[x,2]$. The sites in $C_N[x,2]$ are those laying in the gray region.} \label{Fig3}
\end{figure}
\noindent Let 
$\Lambda_N =\{x\in \bb T^d_N: \frac{x}{N}\in \Lambda\}$ the set of sites in $\frac{1}{N}\bb T^d_N$ belonging to $\Lambda$. We define now the region $C_N[x,\ell]\subset \bb T^d_N$ by
\begin{equation}\label{counterpart}
C_N[x,\ell]\;:=\; \begin{cases}
B_N[x,\ell] \cap \Lambda_N & \text{if }\frac{x}{N}\in \Lambda\,, \medskip  \\
B_N[x,\ell] \cap \Lambda_N^\complement& \text{if }\frac{x}{N}\in \Lambda^\complement\,, 
\end{cases}
\end{equation}
see Figure~\ref{Fig3} for an illustration. For $\beta\in[1,\infty]$, we define the local density as the average over $C_N[x,\ell]$, that is,
\begin{equation}\label{localmean1}
\eta^{\varepsilon N}(x)\;:=\;\frac{1}{\#C_N[x,\eps N]}\sum_{y\in C_N[x,\eps N]}\eta(y)\,.
\end{equation}

\begin{lemma}\label{r2}
Fix $\beta\in [1,\infty]$. Let $f$ be a density with respect to the invariant measure $\nu_\theta$, let $\lambda_N:\bb T_N^d\to\bb R$ a function such that $\Vert \lambda_N\Vert_\infty\leq M<\infty$ and $\gamma>0$. Then, the following inequalities hold: 
\begin{equation}\label{ineq5.10}
\begin{split}
&\int \gamma N\! \sum_{x\in \Gamma_N}\!\lambda_N(x)\big\{\eta(x)-\eta^{\varepsilon N}(x)\big\}f(\eta)\nu_\theta(d\eta)\;\leq\;\pfrac{1}{2}\gamma^2 M^2 \mc O(N^{d})d\eps+N^2\mf D_N(\sqrt{f})
\end{split}
\end{equation}
 and
\begin{equation}\label{ineq5.11}
\begin{split}
&\int \gamma \sum_{x\in\bb T^d_N}  \lambda_N(x)\{\eta(x)-\eta^{\varepsilon N}(x)\}f(\eta)\nu_\theta(d\eta)\;\leq\; \pfrac{1}{2}\gamma^2 M^2 \mc O(N^{d-1})d\eps+N^2\mf D_N(\sqrt{f})\,.
\end{split}
\end{equation}
\end{lemma}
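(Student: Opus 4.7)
The plan is to mimic the scheme of the proof of Lemma~\ref{r1}, exploiting the crucial improvement that, thanks to the modified definition \eqref{localmean1}, the averaging region $C_N[x,\eps N]$ lies entirely on one side of $\p\Lambda$. First I would write
\begin{equation*}
\eta(x)-\eta^{\eps N}(x) \;=\; \frac{1}{\#C_N[x,\eps N]}\sum_{y\in C_N[x,\eps N]}\bigl(\eta(x)-\eta(y)\bigr)
\end{equation*}
and decompose each difference as a telescopic sum along a nearest-neighbor path $x=a_0,a_1,\dots,a_L=y$ of length $L=\mc O(\eps N)$. The key geometric point, which I expect to be the main technical issue to justify carefully, is that by smoothness of $\p\Lambda$ and for $\eps$ small enough such a path can be chosen to lie entirely inside $C_N[x,\eps N]$, so that every traversed bond satisfies $\xi^N_{a_{\ell-1},a_\ell}=1$. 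This is precisely the feature that removes the troublesome $N^\beta/\alpha$ factor that appears in Lemma~\ref{r1}.

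Next, performing the measure-preserving change of variables $\eta\mapsto\eta^{a_{\ell-1},a_\ell}$ and writing $f(\eta^{a_\ell,a_{\ell-1}})-f(\eta)=(\sqrt{f(\eta^{a_\ell,a_{\ell-1}})}-\sqrt{f(\eta)})(\sqrt{f(\eta^{a_\ell,a_{\ell-1}})}+\sqrt{f(\eta)})$, I would apply the elementary inequality $|ab|\leq\tfrac{A}{2}a^2+\tfrac{1}{2A}b^2$ with a parameter $A>0$ to be chosen later. This splits the estimate into a Dirichlet-type contribution bounded by $\tfrac{C}{2A}\mf D_N(\sqrt f)$ (using that each bond of $\bb T^d_N$ is traversed a bounded number of times across the whole family of paths, exactly as in Lemma~\ref{r1}) and an error term; since $\xi^N_{a_{\ell-1},a_\ell}=1$ on our paths, the error per triple $(x,y,\ell)$ is bounded by a constant multiple of $AM^2$, using $\int f\,d\nu_\theta=1$ and the invariance of $\nu_\theta$.

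The two bounds then follow by simple counting and an optimal choice of $A$. For \eqref{ineq5.10} one has $\#\Gamma_N=\mc O(N^{d-1})$ (since $\p\Lambda$ is a smooth $(d-1)$-dimensional surface) and $L=\mc O(\eps N)$, so after multiplying by $\gamma N$ the error is of order $A\gamma M^2\mc O(N^d)\,d\eps$ while the Dirichlet coefficient is $\tfrac{C\gamma N}{2A}$; choosing $A$ of order $\gamma/N$ equalizes the Dirichlet coefficient to $N^2$ and yields $\tfrac12\gamma^2M^2\mc O(N^d)\,d\eps$. For \eqref{ineq5.11} the sum ranges over all of $\bb T^d_N$ (which has $\mc O(N^d)$ terms) and carries the smaller prefactor $\gamma$, producing an error of order $A\gamma M^2\mc O(N^{d+1})\,d\eps$ and Dirichlet coefficient $\tfrac{C\gamma}{2A}$; now the choice $A$ of order $\gamma/N^2$ matches the Dirichlet coefficient to $N^2$ and produces $\tfrac12\gamma^2M^2\mc O(N^{d-1})\,d\eps$, as required. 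Beyond the bookkeeping, the main obstacle remains justifying that short paths within $C_N[x,\eps N]$ connecting $x$ to every $y\in C_N[x,\eps N]$ indeed exist with bounded bond multiplicity; smoothness of $\p\Lambda$ makes this natural because $C_N[x,\eps N]$ approximates a discrete half-space neighborhood of $x$, and the atypical $x$ lying near geometrically awkward pieces of $\p\Lambda$ form an asymptotically negligible subset of $\Gamma_N$ whose contribution is absorbed into the $\mc O(\cdot)$ constants.
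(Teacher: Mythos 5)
Your proposal follows essentially the same route as the paper's proof: the average over $C_N[x,\eps N]$, telescoping along minimal nearest-neighbor paths that avoid $\p\Lambda$ (so every traversed bond has rate one, eliminating the $N^{\beta}/\alpha$ term of Lemma~\ref{r1}), the Young-type splitting with parameter $A$, and the same optimizing choices $A\sim\gamma N^{-1}$ and $A\sim\gamma N^{-2}$ for \eqref{ineq5.10} and \eqref{ineq5.11} respectively. The only blemish is a factor-of-$N$ slip in your intermediate bookkeeping for \eqref{ineq5.10} (after multiplying by $\gamma N$ the error term is of order $A\gamma M^2\mc O(N^{d+1})d\eps$, not $\mc O(N^{d})d\eps$), which is harmless since your stated choice of $A$ and final bound agree with the lemma.
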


\begin{proof} 
Let us  prove the inequality  \eqref{ineq5.11}. As commented in the beginning of this subsection, the local average $\eta^{\eps N}$ is taken over  $C_N[x,\eps N]$. Thus, we can write
 \begin{align}
&\int\lambda_N(x)\{\eta(x)-\eta^{\varepsilon N}(x)\}f(\eta)\nu_\theta(d\eta)\nonumber\\
&=\int \lambda_N(x)\Big\{\frac{1}{\#C_N[x,\eps N]}\sum_{y\in C_N[x,\eps N]}\big(\eta(x)-\eta(y)\big)\Big\}f(\eta)\nu_\theta(d\eta)\,.\label{eqtag}
\end{align} 
For each $y\in C[x,\eps N]$, let $\gamma(x,y)$ be a polygonal path of minimal length connecting $x$ to $y$ which does not crosses $\p \Lambda$. 
That is, $\gamma(x,y)$ is a sequence of sites $(a_0,\ldots,a_M)$ such that  $x=a_0$, $y=a_M,  \Vert a_i-a_{i+1}\Vert_1 =1$ and $\xi_{a_,a_{i+1}}=1$ for $i=0,\ldots,M-1$, and $\gamma(x,y)$ has minimal length, that is, $M=M(x,y)=\Vert x-y\Vert_1+1$.
Now we repeat the steps in the proof of Lemma~\ref{r1}, observing that in this case the sum  will be over $\bb T^d_N$, obtaining that \eqref{eqtag} is  bounded from above by 
\begin{align*}
& \frac{1}{2\#C_N[x,\eps N]}\sum_{x\in\bb T^d_N}\sum_{y\in C_N[x,\eps N]}\sum_{\ell=1}^{M(x,y)-1}\Bigg[\frac{1}{2A} \int\Big(\sqrt{f(\eta^{a_\ell, a_{\ell-1}})}-\sqrt{f(\eta)}\Big)^2 \,d\nu_\theta\Bigg.\\
&+\frac{A}{2}\int\Big( \lambda_N(x)\Big)^2(\eta(a_\ell)-\eta(a_{\ell-1}))^2\Big(\sqrt{f(\eta^{a_\ell, a_{\ell-1}})}+\sqrt{f(\eta)}\Big)^2 \, d\nu_\theta\Bigg] \,.
\end{align*}
We can bound the first parcel in the sum above by $\frac{1}{2A}\mf D_N(\sqrt{f})$ and the second parcel  by
\begin{align*} 
\begin{split}
&\frac{1}{2\#C_N[x,\eps N]}\sum_{x\in\bb T^d_N}\sum_{y\in C_N[x,\eps N]}\sum_{\ell=1}^{M(x,y)-1} \frac{4AM^2}{2}\\
& \leq\;\frac{1}{\#C_N[x,\eps N]}\sum_{y\in C_N[x,\eps N]}AM^2\mc O(N^{d})d\varepsilon N\;=\; AM^2\mc O(N^{d})d\varepsilon N\,.
\end{split}
\end{align*} 
We hence have
\begin{align*}
\int \sum_{x\in \bb T^d_N}\lambda_N(x)\big\{\eta(x)-\eta^{\varepsilon N}(x)\big\}f(\eta)\nu_\theta(d\eta)\;\leq\; AM^2 \mc O(N^{d})d\varepsilon N+\frac{1}{2A}\mf D_N(\sqrt{f}) \, .
\end{align*}
Then, multiplying the inequality above by $\gamma$ gives us 
\begin{align*}
\int \gamma \sum_{x\in \bb T^d_N}\lambda_N(x)\big\{\eta(x)-\eta^{\varepsilon N}(x)\big\}f(\eta)\nu_\theta(d\eta)\;\leq\; A\gamma \mc O(N^{d}) M^2 d\varepsilon N+\frac{\gamma}{2A}\mf D_N(\sqrt{f}) \,. 
\end{align*}
Now choosing $A=\gamma N^{-2}/2$ the proof of \eqref{ineq5.10} ends.
The proof of inequality \eqref{ineq5.10} similar to the proof of Lemma~\ref{r1}, under the additional feature that rates of bonds over a path connecting two sites will be always equal to one, which facilitates the argument.
\end{proof}

\begin{lemma}[Replacement lemma] \label{replacement2} 
Fix $\beta\in [1,\infty]$. Let $\lambda_N:\bb T_N^d\to\bb R$  be a sequence of functions such that $\Vert \lambda_N\Vert_\infty\leq c<\infty$. Then,
\begin{equation*}
\varlimsup_{\varepsilon\rightarrow0} \varlimsup_{N\rightarrow\infty} \bb E^\beta_{\mu_N}\Big[\Big|\int_0^t \frac{1}{N^{d-1}}\sum_{x\in\Gamma_N} \lambda_N(x) \{\eta_s^{\varepsilon N}(x)-\eta_s(x)\}\,ds\Big|\Big]\;=\;0
\end{equation*}
and
\begin{equation*}
\varlimsup_{\varepsilon\rightarrow0} \varlimsup_{N\rightarrow\infty}  \bb E^\beta_{\mu_N}\Big[\Big|\int_0^t\frac{1}{N^{d}} \sum_{x\in\bb T^d_N}\lambda_N(x)\{\eta_s^{\varepsilon N}(x)-\eta_s(x)\}\,ds\Big|\Big]\;=\;0 \, .
\end{equation*}

\end{lemma}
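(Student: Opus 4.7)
The plan is to follow the template of Lemma \ref{replacement}, substituting Lemma \ref{r2} for Lemma \ref{r1}. For the first identity I would introduce a free parameter $\gamma>0$ and apply the entropy inequality at scale $\gamma N^d$: setting $Y_s := \sum_{x\in\Gamma_N}\lambda_N(x)\{\eta_s(x)-\eta_s^{\varepsilon N}(x)\}$,
\begin{align*}
\bb E^\beta_{\mu_N}\Big[\Big|\int_0^t \tfrac{Y_s}{N^{d-1}}\,ds\Big|\Big] \;\leq\; \tfrac{H_N(\mu_N|\nu_\theta)}{\gamma N^d} + \tfrac{1}{\gamma N^d}\log\bb E_{\nu_\theta}\Big[\exp\Big(\gamma N\Big|\int_0^t Y_s\,ds\Big|\Big)\Big].
\end{align*}
By \eqref{cte} the first summand is at most $\kappa_0/\gamma$. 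For the second, $e^{|x|}\leq e^x+e^{-x}$ together with \eqref{A.2} removes the absolute value (up to treating two symmetric expressions), and the Feynman--Kac formula bounds the resulting log-exponential moment by $t\Phi_N^1/(\gamma N^d)$, with
\begin{align*}
\Phi_N^1 \;=\; \sup_{f\text{ density}}\Bigl\{\int \gamma N\sum_{x\in\Gamma_N}\lambda_N(x)\{\eta(x)-\eta^{\varepsilon N}(x)\}f(\eta)\,\nu_\theta(d\eta) - N^2\mf D_N(\sqrt{f})\Bigr\}.
\end{align*}
Inequality \eqref{ineq5.10} of Lemma \ref{r2} yields $\Phi_N^1 \leq \tfrac{1}{2}\gamma^2 c^2\mc O(N^d)d\varepsilon$, so the whole expression is bounded by $\kappa_0/\gamma + \tfrac{1}{2}t\gamma c^2 d\varepsilon\mc O(1)$. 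Passing to $\varlimsup_{N\to\infty}$ and then $\varlimsup_{\varepsilon\to 0}$ eliminates the $\varepsilon$-dependent term, leaving only $\kappa_0/\gamma$; since $\gamma$ was free, letting $\gamma\to\infty$ finishes the first claim.

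For the second identity the same scheme is run with scaling $\gamma$ in place of $\gamma N$. Rewriting the expectation as $(\gamma N^d)^{-1}\bb E^\beta_{\mu_N}[\gamma|\int_0^t\sum_{x\in\bb T^d_N}\lambda_N(x)\{\eta_s(x)-\eta_s^{\varepsilon N}(x)\}\,ds|]$ and repeating the entropy/Feynman--Kac step produces $\kappa_0/\gamma + t\Phi_N^2/(\gamma N^d)$, where now $\Phi_N^2$ is the supremum with $\gamma\sum_{x\in\bb T^d_N}$ replacing $\gamma N\sum_{x\in\Gamma_N}$. Inequality \eqref{ineq5.11} of Lemma \ref{r2} gives $\Phi_N^2\leq \tfrac{1}{2}\gamma^2 c^2\mc O(N^{d-1})d\varepsilon$, so after dividing by $\gamma N^d$ the remaining bound is of order $\gamma c^2 d\varepsilon/N$, which vanishes as $N\to\infty$ for every fixed $\varepsilon,\gamma$; the limits $\varepsilon\to 0$ and $\gamma\to\infty$ are then immediate.

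I do not anticipate any substantial obstacle beyond the already-proved Lemma \ref{r2}: all of the geometric content is packaged there. The essential reason the argument goes through uniformly in $\beta\in[1,\infty]$ is that the local mean \eqref{localmean1} averages only over the one-sided box $C_N[x,\varepsilon N]$ defined in \eqref{counterpart}, so each site in the averaging box can be connected to $x$ by a polygonal path that never crosses $\p\Lambda$. Along such paths every rate $\xi^N_{a_{\ell-1},a_\ell}$ equals $1$, so the factors $1/\xi^N_{a_{\ell-1},a_\ell}$ appearing in the telescopic Dirichlet estimate are bounded uniformly in $\beta$ and $N$, eliminating the dangerous $N^\beta/\alpha$ factor that arises in Lemma \ref{r1} and that would otherwise ruin the argument precisely in the regime $\beta\geq 1$.
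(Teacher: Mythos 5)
Your proposal is correct and follows essentially the same route as the paper: the paper's own proof simply reduces both claims to showing that $t\Phi_N^2/(\gamma N^d)$ and $t\Phi_N^3/(\gamma N^d)$ vanish (via the entropy inequality, $e^{|x|}\le e^x+e^{-x}$ with \eqref{A.2}, and Feynman--Kac, exactly as in Lemma~\ref{replacement}), and then invokes \eqref{ineq5.10} and \eqref{ineq5.11} of Lemma~\ref{r2}, which is precisely what you do. Your closing observation about the one-sided averaging box $C_N[x,\varepsilon N]$ eliminating the $N^\beta/\alpha$ factor is also the correct reason the argument is uniform in $\beta\in[1,\infty]$.
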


\begin{proof}
The proof is similar to the one of Lemma \ref{replacement}, being sufficient to show that expressions
\begin{align*}
\Phi_N^2\;&:=\;\sup_{f\, \textrm{density}} \Big\{\int\gamma N\sum_{x\in\Gamma_N}\lambda_N(x)\{\eta^{\varepsilon N}(x)-\eta(x)\}f(\eta)d\nu_\theta - N^{2}\mf D_N(\sqrt{f})\Big\},\\
\Phi_N^3\;&:=\;\sup_{f\, \textrm{density}} \Big\{\int\gamma\sum_{x\in \bb T^d_N}\lambda_N(x)\{\eta^{\varepsilon N}(x)-\eta(x)\}f(\eta)d\nu_\theta - N^{2}\mf D_N(\sqrt{f})\Big\}
\end{align*}
satisfy
\begin{align*}
&\lim_{N\to\infty}\frac{ t\Phi_N^2}{\gamma N^d}\;=\;0\qquad \text{and} \qquad 
\lim_{N\to\infty}\frac{ t\Phi_N^3}{\gamma N^d}\;=\;0\,,
\end{align*}
which is a    consequence of Lemma \ref{r2}, finishing the proof.
\end{proof}

\subsection{Energy Estimates} 
In this subsection, consider  $\beta\in[1,\infty]$.
Our goal here  is to prove that any limit point $\bb Q^\beta_*$ of the sequence $\{\bb Q^{\beta, N}_{\mu_N}:N>1\}$ is concentrated on trajectories $\rho(t,u) du$ with \textit{finite energy}, meaning that $\rho(t,u)$ belongs to a suitable Sobolev space. 

 This  result  plays a both role in the uniqueness of weak solutions of \eqref{edpbc} and in the characterization of limit points. The fact that $\bb Q^\beta_*$ is concentrated in trajectories with density with respect to the Lebesgue measure of the form $\rho(t,u) du$, with $0\leq \rho\leq 1$, is a consequence of  maximum of one particle per site, see \cite{kl}. The issue here is to prove that the density $\rho(t,u)$ belongs to the Sobolev space $\sobH$, see  Section~\ref{s2} for its definition. 

Assume without loss of generality that the entire sequence $\{\bb Q^{\beta,N}_{\mu_N}: \, N\geq1\}$ weakly converges  to $\bb Q^\beta_*$. 
Let $B[u,\eps]:= \{r\in \bb T^d: \Vert r-u\Vert_\infty<\eps\}$ and
\begin{equation*}
C[u,\eps]\;:=\; \begin{cases}
B[u,\eps] \cap \Lambda & \text{if }u\in \Lambda\,,  \\\medskip
B[u,\eps] \cap \Lambda^\complement& \text{if }u\in \Lambda^\complement\,, 
\end{cases}
\end{equation*}
where we have  written $\Vert \cdot \Vert_\infty$ for the supremum norm on the continuous torus $\bb T^d=[0,1)^d$, that is, $\Vert (u_1,\ldots,u_d)\Vert_\infty = \max\big\{|u_1|\wedge |1-u_1|,\ldots,|u_d|\wedge |1-u_d|\big\}$. See Figure~\ref{Fig4} for an illustration.
\begin{figure}[H]
\centering
\begin{tikzpicture}
\begin{scope}
  \clip (1,2.2) to[out=45,in=225] (3,3.2) to[out=45,in=0] (3,0.2) to[out=180,in=225] (1,2.2);
  \fill[fill=lightgray] (2,0) rectangle (4,2);
\end{scope}
\begin{scope}[yshift=0.2cm]
\draw[draw=black,thick] (1,2) to[out=45,in=225] (3,3) to[out=45,in=0] (3,0) to[out=180,in=225] (1,2);
\end{scope}
%\draw (2.75,1.6) node {$C_N[u,\eps]$};
\draw (1.25,1.75) node {$\Lambda$};
\draw (4.25,3.25) node {$\Lambda^\complement$};
\draw[color=black] (0,-1) rectangle (5,4);
\draw (5.5,3.75) node[above]{$\bb T^d$};
\shade[ball color=black](3,1) circle (0.08);
\draw (2.75,0.75) node {$u$};
\draw[dashed,thick] (2,0) rectangle (4,2);
\end{tikzpicture}
\caption{Illustration in dimension two of $C[u,\eps]$, which is represented by the  region in gray, while $B[u,\eps]$ is represented by the square delimited by the dashed line. Note that $C[u,\eps]$ is the continuous counterpart of $C_N[x,\ell]$ defined in \eqref{counterpart}.} \label{Fig4}
\end{figure}
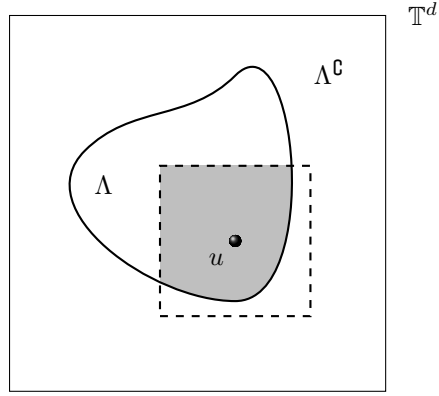
\noindent  We define an approximation of the identity $\iota_\eps$ in the continuous torus $\bb T^d$  by
\begin{equation}\label{approxidentity}
\iota_\eps(u,v)\;:=\; \frac{1}{|C[u,\eps]|} \mathbf{1}_{C[u, \varepsilon]}(v)\,,
    \end{equation}
    where $|C[u,\eps]|$ above denotes the Lebesgue measure of the set $C[u,\eps]$.
Recall that the convolution of a measure $\pi$ with $\iota_\eps$ is defined by
\begin{equation} \label{convolution}
(\pi\ast\iota_\eps)(u)\;=\;\int_{\bb T^d}\iota_\eps(u,v)\pi(dv)\quad \text{ for any }u\in \bb T^d\,.
\end{equation}
Given a function $\rho$, the convolution $\rho\ast\iota_\eps$ shall be understood as the convolution of the measure $\rho(v) dv$ with $\iota_\eps$. 
An important remark now is  the equality
\begin{align}\label{remark}
(\pi^N_t\ast\iota_\eps)\big(\pfrac{x}{N}\big)\;=\;\eta_t^{\varepsilon N}(x)+\mc O\big((\eps N)^{1-d} \big)\,,
\end{align}
where $\eta^{\eps N}_t$ has been defined in \eqref{localmean1}, being the small error above  due to the fact that sites on the boundary of $C_N[x,\ell]$  may or may not belong to $C[u,\eps]$  when taking $u=x/N$ and $\ell=\eps N$. 
Given a  function $H:\bb T^d\rightarrow \bb R$,  let
\begin{equation}\label{eq5.16}
V_N(\varepsilon, j, H,  \eta):=\frac{1}{ N^d}\!\! \sum_{x\in\bb T^d_N}\!\!H\big(\pfrac{x}{N}\big)\frac{\{\eta(x)-\eta(x+\varepsilon N e_j)\}}{\eps}-\frac{2}{N^d}\!\sum_{x\in\bb T^d_N}\!\!\Big(H\big(\pfrac{x}{N}\big)\Big)^2.
\end{equation}
\begin{lemma} \label{lemma5.5}
Consider $H_1, \ldots, H_k$ functions in $C^{0,1}([0,T]\times \bb T^d)$ with compact support contained in $[0,T]\times (\bb T^d\backslash\partial\Lambda)$. Hence, for every $\varepsilon >0$ and $j=1,\ldots,d$,
\begin{equation} \label{5.8e}
\varlimsup_{\delta\rightarrow0} \varlimsup_{N\rightarrow\infty}\bb E_{\mu_N}^{\beta}\Big[\max_{1\leq i \leq k}\Big\{\int_0^T V_N(\varepsilon, j,  H_i(s, \cdot), \eta_s^{\delta N})\,ds\Big\}\Big]\;\leq \;\kappa_0\,,
\end{equation}
where $\kappa_0$ has been defined in \eqref{cte}.
\end{lemma}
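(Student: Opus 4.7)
The strategy is standard: combine the entropy inequality with the max-of-exponentials trick and the Feynman--Kac formula, and reduce the problem to showing that a certain variational expression is nonpositive by means of a Dirichlet form computation. The crucial point is that the hypothesis on the support of the $H_i$ lets the computation avoid the slow bonds.

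First I would apply the entropy inequality with parameter $N^d$. Writing $Y_i := \int_0^T V_N(\eps,j,H_i(s,\cdot),\eta_s^{\delta N})\,ds$, this yields
\begin{equation*}
\bb E^\beta_{\mu_N}\Big[\max_{1\leq i\leq k} Y_i\Big] \;\leq\; \frac{H_N(\mu_N|\nu_\theta)}{N^d} \;+\; \frac{1}{N^d}\log \bb E_{\nu_\theta}\Big[\exp\Big(N^d \max_{1\leq i\leq k} Y_i\Big)\Big].
\end{equation*}
By \eqref{cte} the first summand is bounded by $\kappa_0$, which accounts for the right hand side of \eqref{5.8e}. For the second summand, use $\exp(\max_i x_i)\leq \sum_i \exp(x_i)$ together with \eqref{A.2} and the fact that $k$ is fixed to reduce the claim to showing that, for each fixed $i$,
\begin{equation*}
\varlimsup_{\delta\to 0}\varlimsup_{N\to\infty}\;\frac{1}{N^d}\log \bb E_{\nu_\theta}\Big[\exp\Big(N^d\!\int_0^T \!V_N(\eps,j,H_i(s,\cdot),\eta_s^{\delta N})\,ds\Big)\Big] \;\leq\; 0.
\end{equation*}

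Next I would apply the Feynman--Kac formula (Appendix 1, Lemma 7.2 of \cite{kl}) to bound the quantity above by
\begin{equation*}
\frac{1}{N^d}\int_0^T \sup_{f\,\text{density}}\Big\{ N^d\!\!\int\!\! V_N(\eps, j, H_i(s,\cdot),\eta^{\delta N})\,f(\eta)\,d\nu_\theta \,-\, N^2\mf D_N(\sqrt{f})\Big\}\,ds,
\end{equation*}
so that it suffices to show that the supremum in the integrand is nonpositive for $N$ large. This is achieved by a Dirichlet form computation in the spirit of Lemmas~\ref{r1} and~\ref{r2}: expand $\eta^{\delta N}(x) - \eta^{\delta N}(x+\eps N e_j)$ as a telescopic sum of nearest-neighbor differences along paths in direction $e_j$ inside the averaging box; symmetrize each single-bond integral via the $\nu_\theta$-invariance of the exchange, yielding a factor $\sqrt{f(\eta^{w,w+e_j})}-\sqrt{f(\eta)}$; and apply the Young inequality $ab\leq a^2/(2A)+Ab^2/2$ with $A$ chosen so that the part absorbed into the Dirichlet form is at most $N^2\mf D_N(\sqrt{f})$. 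After interchanging the summations over $x$ and bonds, the residual quadratic term reassembles into a Riemann sum bounded by $\frac{2}{N^d}\sum_x H_i(s,x/N)^2$, and the precise constant $2$ in the definition of $V_N$ is exactly what is needed to cancel it.

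The only delicate point, and the reason the hypothesis on the support of $H_i$ is essential, is that the Cauchy--Schwarz step produces denominators $\xi^N_{w,w+e_j}$, which degenerate to $\alpha N^{-\beta}$ on bonds that cross $\p\Lambda$. Since each $H_i$ is supported in the compact set $[0,T]\times(\bb T^d\setminus\p\Lambda)$, there exists $\delta_0>0$ with $H_i \equiv 0$ in the $\delta_0$-neighborhood of $\p\Lambda$; therefore, for $\eps+\delta<\delta_0/2$ and $N$ large, every bond $(w,w+e_j)$ contributing to the telescopic expansion satisfies $\xi^N_{w,w+e_j}=1$, and the argument goes through without any degeneration. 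This is precisely the main obstacle to overcome; once it is handled, the quantitative matching of constants and the conclusion $\varlimsup_N \Psi_N/N^d \leq 0$ follow along standard lines, and the lemma follows.
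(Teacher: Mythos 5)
Your proposal is correct and follows essentially the same route as the paper: entropy inequality with scale $N^d$ (absorbing $\kappa_0$ from \eqref{cte}), the $\exp(\max)\leq\sum\exp$ trick with \eqref{A.2}, Feynman--Kac, and a Dirichlet-form/telescoping estimate that exploits the support condition to keep all bonds at rate one, with the constant $2$ in $V_N$ cancelling the residual quadratic term. The only cosmetic difference is that the paper first invokes Lemma~\ref{replacement2} to replace $\eta_s^{\delta N}$ by $\eta_s$ before the variational computation, whereas you carry the $\delta N$-average through the telescoping directly; both are valid.
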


\begin{proof} Provided by Lemma~\ref{replacement2}, it is enough to prove that
\begin{equation*} \label{using}
\varlimsup_{N\rightarrow\infty}\,\,\bb E^\beta_{\mu_N}\Big[\max_{1\leq i \leq k}\Big\{\int_0^t V_N(\varepsilon, j,  H_i(s,\cdot), \eta_s)\,ds\Big\}\Big]\;\leq\; \kappa_0\, .
\end{equation*}
By  the entropy inequality, for each fixed $N$, the  expectation above is smaller than
\begin{align*} 
\frac{H(\mu^N|\nu_\theta)}{N^d}+\frac{1}{N^d}\log \bb E_{\nu_\theta}\Big[\exp \Big\{\max_{1\leq i \leq k} N^d\Big\{\int_0^T V_N(\varepsilon, j,  H_i(s, \cdot), \eta_s)\,ds\Big\}\Big\}\Big]\,.
\end{align*}
Using \eqref{cte}, we bound the  first parcel above by  $\kappa_0$. Since $\exp\big\{\max_{1\leq i \leq k} a_j\big\}\leq \sum_{1\leq i \leq k}\exp\{a_j\}$ and by \eqref{A.2}, we conclude that the limsup as $N\uparrow\infty$ of the second parcel above  is less than or equal to 
\begin{align*} 
&\varlimsup_{N\rightarrow\infty} \frac{1}{N^d}\log \bb E_{\nu_\theta}\Big[\sum_{1\leq i \leq k}\exp \Big\{ N^d \int_0^T V_N(\varepsilon, j,  H_i(s, \cdot), \eta_s)\,ds\Big\}\Big]\\
&=\max_{1\leq i \leq k} \varlimsup_{N\rightarrow\infty} \frac{1}{N^d}\log \bb E_{\nu_\theta}\Big[\exp \Big\{ N^d \int_0^T V_N(\varepsilon, j,  H_i(s, \cdot), \eta_s)\,ds\Big\}\Big]\,.
\end{align*}
Thus, in order to conclude the proof, it is enough to show that the limsup above is non positive for each $i=1,\ldots,k$. By the Feynman-Kac formula (see \cite[p. 332, Lemma 7.2]{kl}) for each fixed $N$ and $d\geq 2$, 
\begin{align}
& \frac{1}{N^d}\log \bb E_{\nu_\theta}\Big[\exp \Big\{ N^d \int_0^T V_N(\varepsilon, j,  H_i(s, \cdot), \eta_s)\,ds\Big\}\Big]\label{eq5.18} \\
&\leq \int_0^T \sup_f \Big\{\int V_N(\varepsilon, j,  H_i(s, \cdot), \eta)f(\eta) d \nu_\theta -N^{2-d}\mf D_N(\sqrt{f})\Big\}\,ds\,,\label{eq5.19}
\end{align}
where the  supremum above is taken over all probability densities $f$ with respect to $\nu_\theta$. By assumption, each of the functions $\{H_i : i=1,\ldots,  k\}$ vanishes in a neighborhood of $\partial\Lambda$.
Thus,  we make following observation about the first sum in the RHS of \eqref{eq5.16}: for small $\eps$, non-zero summands are such that  $x/N$ 	and $(x+\eps Ne_j)N$ lay both in $\Lambda$ or both in $\Lambda^\complement$. Henceforth, in such a case, it is possible to find a path no slow bonds connecting $x$ and $x+\eps Ne_j$.
 Keeping this in mind, we can repeat the arguments in the proof of Lemma~\ref{r2} to deduce that 
 \begin{align*}
 & \int \frac{1}{N^d} \sum_{x\in\bb T^d_N}H\big(\pfrac{x}{N}\big)\frac{\{\eta(x)-\eta(x+\varepsilon N e_j)\}}{\eps} f(\eta) d \nu_\theta\\
 &\leq  N^{2-d}\mf D_N(\sqrt{f})+ \frac{2}{N^d}\sum_{x\in\bb T^d_N}\Big(H\big(\pfrac{x}{N}\big)\Big)^2\,.
 \end{align*}
Plugging this inequality into \eqref{eq5.19}  implies that \eqref{eq5.18} has a nonpositive limsup, showing \eqref{using} and therefore finishing the proof.
\end{proof}

\begin{lemma} \label{5.7}
\begin{equation*}
\bb E_{\bb Q^\beta_*}\left[\sup_{H}\left\{\int_0^T\!\!\int_{\bb T^d}(\partial_{u_j}H)(s,u)\rho(s,u)duds-2\int_0^T\!\!\int_{\bb T^d}\left(H(s,u)\right)^2duds\right\}\right]\;\leq\; \kappa_0\,,
\end{equation*}
where the supremum is carried over all functions $H\in C^{0,1}([0,T]\times\bb T^d)$ with compact support contained in $[0,T]\times (\bb T^d\backslash \partial\Lambda)$.
\end{lemma}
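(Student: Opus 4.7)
The plan is to deduce the statement from Lemma \ref{lemma5.5} via three limiting steps, applied to a countable dense family of test functions: first take $N\to\infty$ and $\delta\to 0$ using the weak convergence $\bb Q^{\beta,N}_{\mu_N}\to \bb Q^\beta_*$ to transfer the bound to the limit measure; then take $\eps\to 0$ to replace the finite difference by a derivative; and finally promote the finite maximum to the supremum by density.

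Fix a countable family $\{H_i\}_{i\geq 1}$ of admissible test functions (i.e.\ $C^{0,1}([0,T]\times \bb T^d)$ with compact support in $[0,T]\times(\bb T^d\setminus\p\Lambda)$) which is dense under the norm $\|H\|_\infty + \|\p_{u_j}H\|_\infty$ among all admissible test functions with support in any fixed compact subset of $[0,T]\times(\bb T^d\setminus\p\Lambda)$. For each $k\geq 1$ and each $\eps>0$, Lemma \ref{lemma5.5} gives
\begin{equation*}
\varlimsup_{\delta\to 0}\varlimsup_{N\to\infty}\bb E_{\mu_N}^\beta\Big[\max_{1\leq i\leq k}\int_0^T V_N(\eps, j, H_i(s,\cdot), \eta_s^{\delta N})\,ds\Big]\leq \kappa_0.
\end{equation*}
Using \eqref{remark} the first summand of $V_N(\eps, j, H_i(s,\cdot), \eta_s^{\delta N})$ can be rewritten as
\begin{equation*}
\int_{\bb T^d} H_i(s,u)\,\frac{(\pi^N_s\ast\iota_\delta)(u)-(\pi^N_s\ast\iota_\delta)(u+\eps e_j)}{\eps}\,du + \mc O\big((\delta N)^{1-d}\big),
\end{equation*}
and for fixed $\eps,\delta$ this is a bounded continuous functional of the trajectory $\{\pi^N_s\}\in D([0,T],\mc M)$. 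Since max is continuous in finitely many coordinates, the full integrand inside the expectation is a bounded continuous functional of $\pi^N$. The weak convergence $\bb Q^{\beta,N}_{\mu_N}\to \bb Q^\beta_*$ therefore lets us take $N\to\infty$, and then $\delta\to 0$ (using $\rho\ast\iota_\delta\to \rho$ a.e.\ and $0\leq \rho\leq 1$) to obtain
\begin{equation*}
\bb E_{\bb Q^\beta_*}\Big[\max_{1\leq i\leq k}\Big\{\int_0^T\!\!\int_{\bb T^d}\! H_i(s,u)\frac{\rho(s,u)-\rho(s,u+\eps e_j)}{\eps}du\,ds - 2\int_0^T\!\!\int_{\bb T^d}\! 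H_i(s,u)^2\, du\, ds\Big\}\Big]\leq \kappa_0.
\end{equation*}

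Next I would send $\eps\to 0$. A change of variables on the torus gives
\begin{equation*}
\int_{\bb T^d}\! H_i(s,u)\,\frac{\rho(s,u)-\rho(s,u+\eps e_j)}{\eps}\,du \;=\; \int_{\bb T^d}\!\frac{H_i(s,u)-H_i(s,u-\eps e_j)}{\eps}\,\rho(s,u)\,du,
\end{equation*}
and the right-hand side converges, uniformly in $s$, to $\int_{\bb T^d}\p_{u_j}H_i(s,u)\rho(s,u)\,du$ since $\p_{u_j}H_i$ is bounded and $0\leq \rho\leq 1$. Dominated convergence transfers the bound to the new integrand. Monotone convergence as $k\to\infty$ then replaces $\max_{1\leq i\leq k}$ by $\sup_{i\geq 1}$, and the density of $\{H_i\}_i$ combined with the continuity of the functional $H\mapsto \int_0^T\!\int_{\bb T^d}\p_{u_j}H\cdot\rho\,du\,ds - 2\int_0^T\!\int_{\bb T^d}H^2\,du\,ds$ under $\|\cdot\|_\infty+\|\p_{u_j}\cdot\|_\infty$ (uniformly over functions with support in any fixed compact set) upgrades this supremum to the full supremum appearing in the statement.

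The main obstacle I expect is the identification step: passing from the expectation under $\bb Q^{\beta,N}_{\mu_N}$ of a functional of $\eta^{\delta N}$ to the expectation under $\bb Q^\beta_*$ of a functional of $\rho$. One must verify that, after the regularizations by $\iota_\delta$, the relevant finite-difference functional is bounded and continuous on $D([0,T],\mc M)$ in the Skorohod topology (so that Portmanteau applies), and that the approximation errors in \eqref{remark} of order $(\delta N)^{1-d}$ are negligible in expectation. Once these continuity issues are settled, everything else is a routine combination of dominated and monotone convergence.
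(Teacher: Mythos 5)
Your proposal is correct and follows essentially the same route as the paper's proof: apply Lemma~\ref{lemma5.5} together with Portmanteau's theorem to transfer the bound (for the $\iota_\delta$-regularized profiles) to $\bb Q^\beta_*$, let $\delta\downarrow 0$ via Lebesgue differentiation, move the finite difference onto $H$ by a change of variables and let $\eps\downarrow 0$, and finally pass from the finite maximum to the supremum by monotone convergence over a dense sequence. The only difference is that you spell out the Skorohod-continuity of the regularized finite-difference functional, which the paper leaves implicit.
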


\begin{proof}
Consider a sequence $\{H_i:\,i\geq1\}$ dense in the subset of $C^2([0,t]\times\bb T^d)$ of functions with support contained in $[0,T]\times(\bb T^d\backslash \p\Lambda)$, being the density with respect  to the norm $\Vert H\Vert_\infty+\Vert\p_u H\Vert_\infty$.
Recall  we are assuming  that $\{\bb Q_{\mu_N}^{\beta, N}: N\geq 1\}$ converges to $\bb Q^\beta_*$. Then, by \eqref{5.8e} and the Portmanteau Theorem,
\begin{align*}
\varlimsup_{\delta\rightarrow0}\bb E_{\bb Q_{*}^{\beta}}&\Big[\max_{1\leq i\leq k}\Big\{\frac{1}{\varepsilon}\int_0^T\int_{\bb T^d}H_i(s,u) )\{\rho_s^{\delta}(u)-\rho_s^{\delta}(u+\varepsilon e_j)\}\,duds\\
&-2\int_0^T\int_{\bb T^d}(H_i(s,u))^2\,duds\Big\}\Big]\;\leq\; \kappa_0,
\end{align*}
where $\rho_s^\delta(u)=(\rho_s\ast \iota_\delta)(u)$ as defined in \eqref{convolution}. Letting $\delta\downarrow0$, the Lebesgue Differentiation Theorem assures that $\rho_s^\delta(u)$ converges almost surely to $\rho_s$. Then, performing a change of variables and  letting $\varepsilon\downarrow0$, we obtain that 
\begin{equation*}
\bb E_{\bb Q_{*}^{\beta}}\Big[\max_{1\leq i\leq k}\Big\{\int_0^T\!\!\int_{\bb T^d} (\p_{u_j}H_i(s,u)) \rho_s(u)\,duds-2\int_0^T\!\!\int_{\bb T^d}(H_i(s,u))^2\,duds\Big\}\Big]\leq \kappa_0.
\end{equation*}
Since the maximum increases to the supremum, we conclude the lemma by applying the Monotone Convergence Theorem to $\{H_i:\, i\geq1\}$, which is a dense sequence in the subset of functions $C^2([0,T]\times\bb T^d)$ with compact support contained in $[0,T]\times \bb (T^d \backslash \p\Lambda)$. 
\end{proof} 
\begin{proposition}\label{Prop5.7}
The measure $\bb Q^\beta_*$ is concentrated on paths $\pi(t,u)=\rho(t,u)du$ such that $\rho\in \sobH$.
\end{proposition}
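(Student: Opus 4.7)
The plan is to upgrade the variational bound of Lemma~\ref{5.7} into the statement that, $\bb Q^\beta_*$-almost surely, $\rho(s,\cdot)$ admits weak partial derivatives in $L^2(\bb T^d \backslash \p\Lambda)$ with square-integrable $\mc H^1$-norm in time. First I would note that Lemma~\ref{5.7} implies that, for each $j \in \{1,\ldots,d\}$, the nonnegative random variable
\[
K_j(\rho) \;:=\; \sup_{H} \Big\{ \int_0^T\!\!\int_{\bb T^d} (\p_{u_j} H)(s,u)\, \rho(s,u) \, du\, ds \;-\; 2 \int_0^T\!\!\int_{\bb T^d} (H(s,u))^2 \, du\, ds \Big\}
\]
is finite $\bb Q^\beta_*$-almost surely, with $\bb E_{\bb Q^\beta_*}[K_j(\rho)] \leq \kappa_0$. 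The supremum runs over $H\in C^{0,1}([0,T]\times\bb T^d)$ having compact support inside $[0,T]\times(\bb T^d\backslash\p\Lambda)$, which is a symmetric admissible class.

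Next I would apply the classical scaling trick: substituting $\pm \lambda H$ in place of $H$ for $\lambda>0$ in the defining inequality $K_j(\rho)\geq \int(\p_{u_j}H)\rho - 2\Vert H\Vert_{L^2}^2$ and optimizing in $\lambda$ gives
\[
\Big| \int_0^T\!\!\int_{\bb T^d} (\p_{u_j} H)(s,u)\, \rho(s,u) \, du\, ds \Big| \;\leq\; 2\sqrt{2 K_j(\rho)}\, \Vert H \Vert_{L^2([0,T]\times \bb T^d)}
\]
for every admissible $H$. Hence the linear functional $H\mapsto \int_0^T\!\!\int_{\bb T^d}(\p_{u_j}H)\rho\, du\, ds$ extends by continuity to a bounded linear functional on the closed subspace of $L^2([0,T]\times\bb T^d)$ of functions supported on $[0,T]\times(\bb T^d\backslash\p\Lambda)$. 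The Riesz representation theorem then produces $g_j\in L^2([0,T]\times(\bb T^d\backslash\p\Lambda))$ satisfying
\[
\int_0^T\!\!\int_{\bb T^d} (\p_{u_j} H)(s,u)\, \rho(s,u) \, du\, ds \;=\; -\int_0^T\!\!\int_{\bb T^d} H(s,u)\, g_j(s,u) \, du\, ds
\]
for all admissible $H$. By definition this identifies $g_j$ with the weak partial derivative $\p_{u_j}\rho$ on the open set $\bb T^d\backslash\p\Lambda$, and the preceding bound gives $\Vert g_j\Vert_{L^2}^2 \leq 8 K_j(\rho)$. Summing over $j$ and taking $\bb Q^\beta_*$-expectation produces $\bb E_{\bb Q^\beta_*}\big[\Vert \rho\Vert_{\sobH}^2\big]\leq 8 d\kappa_0 <\infty$, so in particular $\rho\in\sobH$ almost surely, which is exactly the proposition.

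The one step I expect to require some care is the density argument converting the variational bound into a genuine weak-derivative statement. One must check that the restricted class of test functions in Lemma~\ref{5.7} (Lipschitz, compactly supported away from $\p\Lambda$) is dense in $L^2([0,T]\times(\bb T^d\backslash\p\Lambda))$ (so Riesz applies) and that it suffices to detect the weak partial derivative on the open set $\bb T^d\backslash\p\Lambda$ (so the resulting $g_j$ really is $\p_{u_j}\rho$ there). Both facts follow from standard mollification and cutoff arguments, and hinge only on the smoothness of $\p\Lambda$, so they are routine; everything else (the $\lambda$-optimization and the Riesz extraction) is entirely standard.
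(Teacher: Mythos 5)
Your proposal is correct and follows essentially the same route as the paper: both use Lemma~\ref{5.7} to show that the linear functional $H\mapsto\int_0^T\int_{\bb T^d}(\p_{u_j}H)\rho\,du\,ds$ is $\bb Q^\beta_*$-a.s.\ bounded on a dense class of test functions supported away from $\p\Lambda$, and then invoke the Riesz representation theorem to produce the weak derivative. Your version merely makes explicit the $\lambda$-optimization and the quantitative bound $\Vert g_j\Vert^2\leq 8K_j(\rho)$ that the paper leaves implicit in the phrase ``a.s.\ bounded functional.''
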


\begin{proof} 
Denote by $\ell:C^2([0,T]\times\bb T^d)\rightarrow \bb R$ the linear functional defined by 
$$\ell(H)\;=\;\int_0^T\int_{\bb T^d} (\partial_{u_j}H)(s,u)\rho(s,u)\, du\,ds\,.$$
Since the set of functions $H\in C^2([0,T]\times\bb T^d)$ with support contained in $[0,T]\times(\bb T^d\backslash\partial\Lambda)$ is dense in $L^2([0,T]\times\bb T^d)$ and since by Lemma \ref{5.7} $\ell$ is a $\bb Q^\beta_*$-a.s. bounded functional in $C^2([0,T]\times\bb T^d)$, we can extend it to a $\bb Q^\beta_*$-a.s. bounded functional in $L^2([0,T]\times\bb T^d)$, which is a Hilbert space. Then, by the Riesz Representation Theorem, there exists a function $G\in L^2([0,T]\times\bb T^d)$ such that
$$\ell(H)\;=\;-\int_0^T\int_{\bb T^d} H(s,u)G(s,u)\, du\,ds\,,$$
concluding  the proof.
\end{proof}

\section{Characterization of limit points} \label{s6}
Before going into the details of each regime $\beta\in[0,1)$, $\beta=1$ or $\beta\in(1,\infty]$, we make some useful considerations for all cases.

We will prove in this section that all limit points of the sequence $\{\bb Q^{\beta,N}_{\mu_N}: \, N\geq1\}$ are concentrated on trajectories of measures $\pi(t,du)=\rho(t,u)\,du$, whose density $\rho(t,u)$ with respect to the Lebesgue measure is the weak solution of the hydrodynamic equation \eqref{edpheat}, \eqref{edp12} or \eqref{edpbc} for each corresponding value of $\beta$.
Provided by tightness, let $\bb Q_*^\beta$ be a limit point of the sequence $\{\bb Q^{\beta,N}_{\mu_N}: \, N\geq1\}$ and assume, without loss of generality, that $\{\bb Q^{\beta,N}_{\mu_N}: \, N\geq1\}$ converges to $\bb Q^\beta_*$. 

Since there is at most one particle per site, it is easy to show that $\bb Q_*^\beta$ is concentrated on trajectories $\pi(t,du)$ which are absolutely continuous with respect to the Lebesgue measure $\pi(t,du)=\rho(t,u)\,du$ and whose density $\rho(t,\cdot)$,  is nonnegative and bounded by one.  Recall the martingale $M_t^N(H)$  in~\eqref{M}. 
\begin{lemma} \label{bm}
If 
\begin{enumerate}[a)]
\item $\beta\in[0,1)$ and $H\in C^2(\bb T^d)$, or
\item $\beta\in [1,\infty]$ and $H\in C^2(\bb T^d \backslash \p\Lambda)$, 
\end{enumerate}
then, for all $\delta>0$, 
\begin{equation}\label{quamarprop}
\lim_{N\rightarrow\infty} \bb P^N_{\mu_N}\Big[\sup_{0\leq t\leq T}|M_t^N(H)|>\delta\Big]\;=\;0\, .
\end{equation}
\end{lemma}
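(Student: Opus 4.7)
The natural approach is to apply Doob's $L^2$ maximal inequality
\begin{equation*}
\bb P^N_{\mu_N}\Big[\sup_{0\leq t\leq T}|M^N_t(H)|>\delta\Big]\;\leq\;\frac{1}{\delta^2}\, \bb E^N_{\mu_N}\big[\<M^N(H)\>_T\big]
\end{equation*}
and then to show that the expected quadratic variation vanishes as $N\to\infty$ in both regimes. Starting from the explicit expression \eqref{quamar0}, I would split the inner sum according to whether the bond $\{x,x+e_j\}$ is a slow bond (i.e.\ crosses $\p\Lambda$) or not.

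\emph{Case (a).} For $\beta\in[0,1)$ and $H\in C^2(\bb T^d)$, the mean value theorem yields $|H((x+e_j)/N)-H(x/N)|\leq \|\p_{u_j}H\|_\infty/N$ uniformly in $x$, regardless of whether the bond is slow. Since $\xi^N_{x,x+e_j}\leq \alpha\vee 1$ for every bond, summing over the $\mc O(N^d)$ bonds yields exactly the bound \eqref{quamar} already derived inside the proof of Proposition~\ref{tight}, which is $\mc O(N^{-d})$. Doob's inequality then reproduces \eqref{limmart} and closes the case.

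\emph{Case (b).} Here $H$ is only $C^2$ on each of the two open components of $\bb T^d\setminus \p\Lambda$, so it may have a jump across $\p\Lambda$; I would write $H=h_1\mathbf{1}_\Lambda+h_2\mathbf{1}_{\Lambda^\complement}$ with $h_1,h_2\in C^2(\bb T^d)$. For a non-slow bond both endpoints lie on the same side of $\p\Lambda$, so the Taylor argument of case (a) applies and the non-slow contribution to $\<M^N(H)\>_T$ is $\mc O(T/N^d)$. For a slow bond the discrete increment $|H((x+e_j)/N)-H(x/N)|$ can be of order $\|h_1\|_\infty+\|h_2\|_\infty=\mc O(1)$; however, the rate is $\alpha/N^\beta$ and, since $\p\Lambda$ is a smooth $(d-1)$-dimensional surface, $\#\Gamma_N=\mc O(N^{d-1})$. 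The total slow-bond contribution to $\<M^N(H)\>_T$ is therefore
\begin{equation*}
\mc O\!\left(\frac{T\alpha}{N^{2d-2}}\cdot N^{d-1}\cdot \frac{1}{N^\beta}\right)\;=\;\mc O\!\left(\frac{T\alpha}{N^{d+\beta-1}}\right),
\end{equation*}
which tends to $0$ for every $\beta\geq 1$ (and is identically zero when $\beta=\infty$).

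The only point requiring care is the slow-bond estimate in case~(b): a jump of $H$ across $\p\Lambda$ could a priori produce an $\mc O(1)$ contribution per bond, but it is tamed by two small factors, the rate $\alpha/N^\beta$ and the sub-volume cardinality $\mc O(N^{d-1})$ of $\Gamma_N$. Combining both contributions gives $\bb E^N_{\mu_N}[\<M^N(H)\>_T]=o(1)$, and Doob's inequality yields \eqref{quamarprop}.
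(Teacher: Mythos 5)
Your proposal is correct and follows essentially the same route as the paper: case (a) is exactly the bound \eqref{quamar} already obtained in the tightness proof, and for case (b) the paper likewise splits the quadratic variation \eqref{quamar0} into non-slow bonds (Taylor bound of order $N^{-2}$ per bond) and the $\mc O(N^{d-1})$ slow bonds (bounded by $4\alpha\Vert H\Vert_\infty^2 N^{-\beta}$ per bond), obtaining $\<M^N(H)\>_T=\mc O(N^{-d})$ and concluding with Doob's inequality. Your identification of the slow-bond term as the only delicate point, tamed by the rate $\alpha/N^\beta$ and the cardinality of $\Gamma_N$, matches the paper's argument exactly.
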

\begin{proof}
Item a) has been already proved in \eqref{limmart}. For item b), recalling \eqref{quamar0} note that
\begin{equation}\label{61}
\<M^{N}(H)\>_t \;\leq\;  \frac{T}{N^{2d-2}}\sum_{j=1}^d\sum_{x\in\bb T_{N}^d} \xi^{N}_{x,x+e_j}\Big[H(\pfrac{x+e_j}{N})-H(\pfrac{x}{N})\Big]^2\, . 
\end{equation}
Since $H\in C^2(\bb T^d \backslash \p\Lambda)$, $H$ is differentiable with bounded derivative except over $\p\Lambda$. Therefore, if the edge  $x, \, x+e_j$ is not a slow bond, then
\begin{equation}\label{62}
 \xi^{N}_{x,x+e_j}\Big[H(\pfrac{x+e_j}{N})-H(\pfrac{x}{N})\Big]^2 \;\leq\; \frac{1}{N^2}\Vert\p_{u_j}H\Vert^2_\infty \, . 
\end{equation}
On the other hand, if the edge  $x, \, x+e_j$ is  a slow bond, then
\begin{align}\label{63}
 \xi^{N}_{x,x+e_j}\Big[H(\pfrac{x+e_j}{N})-H_t(\pfrac{x}{N})\Big]^2 &\;\leq\; \frac{4\alpha \Vert H\Vert_{\infty}^2}{N^{\beta}} \,.
\end{align}
Since the number of slow bonds is of order $\mc O(N^{d-1})$, plugging \eqref{62} and \eqref{63} into \eqref{61} gives us $\<M^{N}(H_t)\>_t\leq \mc O(1/N^d)$.
' Then, Doob's inequality concludes the proof.
\end{proof}

\subsection{Characterization of limit points for \texorpdfstring{$\beta\in[0,1)$}{beta em [0,1)}.} \label{6.1}

\begin{proposition} \label{6.1.1}
Let $H\in C^2(\bb T^d)$. Then, for any $\delta>0$, 
\begin{equation*}
\bb Q^\beta_*\Big[\pi.:\, \sup_{0\leq t\leq T}\Big|\<\pi_t, H\> \,-\, \<\pi_0, H \> \,-\, \int_0^t  \, \<\pi_s ,\Delta H \> \,ds\Big| >\delta\Big]\;=\;0\, . 
\end{equation*}
\end{proposition}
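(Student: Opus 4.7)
The plan is to extract Proposition \ref{6.1.1} from the martingale decomposition \eqref{M} in tandem with the weak convergence $\bb Q_{\mu_N}^{\beta,N} \Rightarrow \bb Q^\beta_*$. By Lemma \ref{bm}(a), $\sup_{0\le t\le T}|M_t^N(H)| \to 0$ in probability, so the proposition reduces to showing that
$$\int_0^t \<\pi_s^N, N^2 \bb L_N H\> \, ds \;\longrightarrow\; \int_0^t \<\pi_s, \Delta H\> \, ds$$
in probability (together with a standard continuous-mapping/Portmanteau argument for the limit point $\bb Q^\beta_*$). For this, I would decompose $N^2 \bb L_N H(x/N) = N^2 \Delta_N H(x/N) + N^2(\bb L_N - \Delta_N)H(x/N)$, where $\Delta_N$ is the discrete Laplacian with all rates equal to one. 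For $H\in C^2(\bb T^d)$, Taylor's theorem gives $N^2\Delta_N H(x/N) = \Delta H(x/N)+o(1)$ uniformly in $x$; combined with $|\eta|\le 1$ this handles the first piece via weak convergence of $\pi_s^N$.

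The remainder $N^2(\bb L_N - \Delta_N)H$ is supported on $\Gamma_N$ and comes solely from bonds crossing $\p\Lambda$. Grouping the contribution of each slow bond $\{x,x+e_j\}$, I rewrite
$$\frac{1}{N^d}\sum_{x\in\bb T^d_N}\eta_s(x)\,N^2(\bb L_N - \Delta_N)H(x/N) \;=\; \frac{\alpha N^{1-\beta}-N}{N^d}\!\!\sum_{\{x,x+e_j\}\text{ slow}}\!\!(\eta_s(x)-\eta_s(x+e_j))\,\partial_{u_j}H(x/N) \;+\; o(1),$$
using $H((x+e_j)/N)-H(x/N) = N^{-1}\partial_{u_j}H(x/N) + O(N^{-2})$ and the fact that $|\Gamma_N|=\mc O(N^{d-1})$. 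The coefficient $|\alpha N^{1-\beta}-N|$ is of order $N$, so a naive bound on the discrete gradient $\eta_s(x)-\eta_s(x+e_j)$ gives nothing.

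The key step is to invoke the Replacement Lemma \ref{replacement} (with $\lambda_N$ essentially $\partial_{u_j} H$), replacing $\eta_s(x)$ and $\eta_s(x+e_j)$ by the local averages $\eta_s^{\eps N}(x)$ and $\eta_s^{\eps N}(x+e_j)$ up to an $L^1$ error that vanishes when first $N\to\infty$ and then $\eps\to 0$. A direct count shows that the cubes defining these two averages share all but $\mc O((\eps N)^{d-1})$ sites, so $|\eta^{\eps N}(x)-\eta^{\eps N}(x+e_j)|\le C/(\eps N)$ pointwise. Plugging this into the slow-bond sum yields a bound of order $\frac{N}{N^d}\cdot N^{d-1}\cdot \frac{1}{\eps N}\cdot\|\nabla H\|_\infty = \mc O((\eps N)^{-1})$, which vanishes as $N\to\infty$ for each fixed $\eps>0$.

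The main obstacle is precisely the slow-bond correction just discussed: pointwise it diverges at rate $N$, so no uniform bound on $\eta_s(x)-\eta_s(x+e_j)$ suffices; one genuinely needs the smoothing provided by Lemma \ref{replacement} together with the $\mc O(1/(\eps N))$ cancellation between neighboring local averages. With these pieces in hand, the conclusion follows by passing to the limit under $\bb Q_{\mu_N}^{\beta,N}\Rightarrow\bb Q^\beta_*$ and using that the functional $\pi_\sbullet \mapsto \sup_{t\le T}|\<\pi_t,H\>-\<\pi_0,H\>-\int_0^t\<\pi_s,\Delta H\>\,ds|$ is continuous on $D([0,T],\mc M)$ at trajectories that are absolutely continuous in time and space, combined with the Portmanteau theorem.
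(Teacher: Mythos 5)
Your proposal follows essentially the same route as the paper's proof: kill the martingale via Lemma \ref{bm}, Taylor--expand $N^2\bb L_N H$ away from $\Gamma_N$, pair the slow-bond contributions into discrete gradients $\eta_s(x+e_j)-\eta_s(x)$ tested against $\p_{u_j}H$, control these by the Replacement Lemma \ref{replacement} together with the $\mc O(1/(\eps N))$ bound on $\eta^{\eps N}(x)-\eta^{\eps N}(x+e_j)$, and conclude with Portmanteau. The only detail worth adding is the reduction of $\sup_{0\le t\le T}$ to finitely many fixed times (a partition of $[0,T]$ plus uniform boundedness of the integrand, as in the paper), since Lemma \ref{replacement} controls $\bb E\big[\big|\int_0^t\cdots\,ds\big|\big]$ only for each fixed $t$.
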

\begin{proof}
Since $\bb Q^{\beta, N}_{\mu_N}$ converges weakly to $Q^\beta_*$, by Portmanteau's Theorem (see \cite[Theorem 2.1]{bili}), 
\begin{align}
&\bb Q^\beta_*\Big[\pi.:\, \sup_{0\leq t\leq T}\Big|\<\pi_t, H\> \,-\, \<\pi_0, H \> \,-\, \int_0^t  \, \<\pi_s ,\Delta H \> \,ds\Big| >\delta\Big] \nonumber\\
&\leq\varlimsup_{N\rightarrow\infty}\bb Q^{\beta, N}_{\mu_N}\Big[\pi.:\, \sup_{0\leq t\leq T}\Big|\<\pi_t, H\> \,-\, \<\pi_0, H \> \,-\, \int_0^t  \, \<\pi_s ,\Delta H \> \,ds\Big| >\delta\Big]\label{eq62}
\end{align}
since the supremum above is a continuous function in the Skorohod metric, see Proposition \ref{A.3}. Recall that $\bb Q^{\beta, N}_{\mu_N}$ is the probability measure induced by $\bb P^{\beta}_{\mu_N}$ via the empirical measure. With this in mind and then adding and subtracting $\<\pi_s^N, N^2 \bb L_N H\>$,  expression \eqref{eq62} can be bounded from above by
\begin{equation*}
\begin{split}
&\varlimsup_{N\rightarrow\infty}\bb P^{\beta}_{\mu_N}\Big[\pi.:\, \sup_{0\leq t\leq T}\Big|\<\pi^N_t, H\> \,-\, \<\pi^N_0, H \> \,-\, \int_0^t  \, \<\pi^N_s ,N^2\bb L_N H \> \,ds\Big| >\delta/2\Big]\\
&+\varlimsup_{N\rightarrow\infty}\bb P^{\beta}_{\mu_N}\Big[\pi.:\, \sup_{0\leq t\leq T}\Big|\int_0^t  \, \<\pi^N_s ,\Delta H-N^2\bb L_N H \> \,ds\Big| >\delta/2\Big]\,.
\end{split}
\end{equation*}
By Lemma \ref{bm}, the first term above is null. Since there is at most one particle per site, the second term in last expression is bounded by
\begin{equation*}
\begin{split}
&\varlimsup_{N\rightarrow\infty}\bb P^{\beta}_{\mu_N}\Big[\frac{T}{N^d}\sum_{x\notin\Gamma_N}\Big|\Delta H\Big(\frac{x}{N}\Big)-N^2\bb L_N\Big(\frac{x}{N}\Big)\Big| >\delta/4\Big]\\
&+\varlimsup_{N\rightarrow\infty}\bb P^{\beta}_{\mu_N}\Big[\sup_{0\leq t \leq T}\Big|\int_0^t \frac{1}{N^d}\sum_{x\in\Gamma_N}\Big\{\Delta H\Big(\frac{x}{N}\Big)-N^2\bb L_N\Big(\frac{x}{N}\Big)\Big\}\eta_s(x) \, ds\Big| >\delta/4\Big] \,.
\end{split}
\end{equation*}
Outside $\Gamma_N$, the operator $N^2\bb L_N$ coincides with the discrete Laplacian. Since $H\in C^2(\bb T^d)$,  the first probability above vanishes for $N$ sufficiently large.  Recall that the number  of  elements in $\Gamma_N$ is of order $N^{d-1}$. Applying the triangular inequality, the second expression in the previous sum becomes bounded by the sum of 
\begin{equation}\label{eq6.2}
\varlimsup_{N\rightarrow\infty}\bb P^{\beta}_{\mu_N}\Big[\mc O(N^{-1})T\|\Delta H\|_{\infty}>\delta/8\Big]
\end{equation}
and
\begin{equation}\label{eq6.3}
\varlimsup_{N\rightarrow\infty}\bb P^{\beta}_{\mu_N}\Big[\sup_{0\leq t \leq T}\Big| \int_0^t \frac{1}{N^{d-1}}\sum_{x\in\Gamma_N}N\bb L_N \Big(\frac{x}{N}\Big)\eta_s(x) \, ds\Big| >\delta/8\Big]\,. 
\end{equation}
For large $N$, the probability in \eqref{eq6.2} vanishes. We deal now with \eqref{eq6.3}.
Let $x\in \Gamma_N$.   By definition of $\Gamma_N$, some adjacent bond to $x$ is a \textit{slow bond}. Thus, the opposite vertex to $x$ with respect to this bond is also in $\Gamma_N$, see Figure~\ref{Fig5}.
\begin{figure}[htb!]
\centering
\begin{tikzpicture}
\begin{scope}[yshift=0.2cm]
\draw[draw=black,thick] (1,2) to[out=45,in=225] (3,3) to[out=45,in=0] (3,0) to[out=180,in=225] (1,2);
\end{scope}
\draw (1.25,1.75) node {$\Lambda$};
\draw (4.25,3.25) node {$\Lambda^\complement$};
\draw[color=black,step=1cm] (0,-1) grid (5,4);
\draw (5.7,3.75) node[above]{$N^{-1}\bb T^d_N$};
\shade[ball color=black](3,1) circle (0.1);
\draw (2.75,0.7) node {$\frac{x}{N}$};

\shade[ball color=black](4,1) circle (0.1);
\draw (3.8,0.7) node {$\frac{y}{N}$};

\shade[ball color=black](3,0) circle (0.1);
\draw (2.75,-0.3) node {$\frac{z}{N}$};
\end{tikzpicture}
\caption{Illustration of sites $x,y,z\in \Gamma_N$. We note that two adjacent edges to $x$ are slow bonds, and two adjacent edges are not. Besides,  any opposite vertex to $x$ will be of the form $x\pm e_j$.} \label{Fig5}
\end{figure}
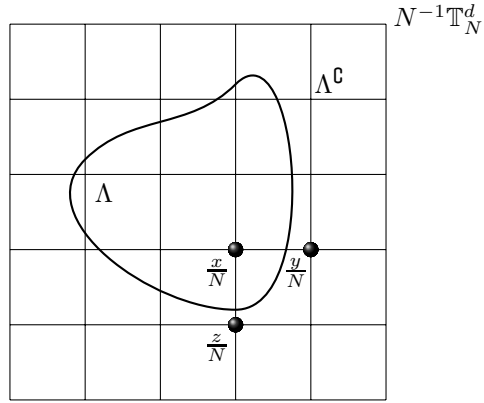

Recall the definition of $\bb L_N$ in \eqref{bbLN}. Whenever $\{x,x-e_j\}$ neither $\{x,x+e_j\}$ are slow bonds, the expression 
\begin{align*}
\xi^N_{x,x+e_j} \, \Big[ H\big(\pfrac{x+e_j}{N}\big) 
- H\big(\pfrac{x}{N}\big) \Big] + \xi^N_{x,x-e_j} \, \Big[H\big(\pfrac{x-e_j}{N}\big) - H\big(\pfrac{x}{N}\big) \Big] 
\end{align*}
is of order $\mc O(N^{-2})$ due to assumption $H\in C^2(\bb T^d)$. Therefore, in \eqref{eq6.3} we can disregard terms of this kind, reducing the proof that \eqref{eq6.3} is null to prove that
\begin{equation}\label{eq6.5}
\varlimsup_{N\rightarrow\infty}\bb P^{\beta}_{\mu_N}\Big[\sup_{0\leq t \leq T}\Big| \int_0^t \frac{1}{N^{d-1}}\hspace{-0.3cm}\sum_{\topo{e=\{x,x+e_j\}}{e \text{ is a slow bond}}}\!\!{\bf A}(e) \, ds\Big| >\delta/16\Big]\;=\;0\,,
\end{equation}
where
\begin{align*}
{\bf A}(e)\;=\; &  \Bigg[\alpha N^{1-\beta}\Big(H\big(\pfrac{x+e_j}{N}\big)-H\big(\pfrac{x}{N}\big)\Big)+\frac{H\big(\pfrac{x-e_j}{N}\big)-H\big(\pfrac{x}{N}\big)}{1/N}\Bigg]\eta_s(x)\\
+\;&  \Bigg[\frac{H\big(\pfrac{x+2e_j}{N}\big)-H\big(\pfrac{x+e_j}{N}\big)}{1/N}+\alpha N^{1-\beta}\Big(H\big(\pfrac{x}{N}\big)-H\big(\pfrac{x+e_j}{N}\big)\Big)\Bigg]\eta_s(x+e_j)\,.
\end{align*}
Since $H$ is smooth, the terms inside  parenthesis involving $N^{1-\beta}$ are of order $\mc O(N^{-\beta})$ and hence negligible. On the other hand, the remaining terms are close  to plus or minus the derivative of $H$ at $x/N$.  We have thus reduced  the proof of \eqref{eq6.5} to the proof of 
\begin{equation}\label{eq6.6}
\varlimsup_{N\rightarrow\infty}\bb P^{\beta}_{\mu_N}\Big[\sup_{0\leq t \leq T}\Big| \int_0^t \frac{1}{N^{d-1}}\hspace{-0.3cm}\sum_{\topo{e=\{x,x+e_j\}}{e \text{ is a slow bond}}}\!\!\p_{u_j}H\big(\pfrac{x}{N}\big) \big(\eta_s(x+e_j)-\eta_s(x)\big) \, ds\Big| >\delta/32\Big]\;=\;0\,.
\end{equation}
Let $t_0=0<t_1<\cdots<t_n=T$ be a partition of $[0,T]$ with mesh bounded by an arbitrary
$\tilde{\eps}>0$. Via the triangular inequality, if we prove that 
\begin{equation*}
\sum_{k=0}^n\varlimsup_{N\to\infty} \bb P^{\beta}_{\mu_N} \Big[ \,\Big\vert
 \int_0^{t_k} \frac{1}{N^{d-1}}\hspace{-0.3cm}\sum_{\topo{e=\{x,x+e_j\}}{e \text{ is a slow bond}}}\!\!\p_{u_j}H\big(\pfrac{x}{N}\big) \big(\eta_s(x+e_j)-\eta_s(x)\big) \, ds\,\Big\vert
 \, > \, \delta \,\Big]
\end{equation*}
vanishes, then we will conclude that \eqref{eq6.6} vanishes as well.
Therefore, it is enough now to show that, for any $\delta>0$ and any $t\in[0,T]$,
\begin{equation*}
\varlimsup_{N\to\infty} \bb P^{\beta}_{\mu_N} \Big[ \,\Big\vert
 \int_0^{t}
\frac{1}{N^{d-1}}\hspace{-0.3cm}\sum_{\topo{e=\{x,x+e_j\}}{e \text{ is a slow bond}}}\!\!\p_{u_j}H\big(\pfrac{x}{N}\big) \big(\eta_s(x+e_j)-\eta_s(x)\big) \, ds\,\Big\vert
 \, > \, \delta \,\Big]\;=\;0\,.
\end{equation*}
 Markov's inequality then allows us to bound the expression above  by
\begin{equation} \label{markovex}
\varlimsup_{N\to\infty} \delta^{-1}\bb E^\beta_{\mu_N}\Big[\, \Big| \int_0^{t}
\frac{1}{N^{d-1}}\hspace{-0.3cm}\sum_{\topo{e=\{x,x+e_j\}}{e \text{ is a slow bond}}}\!\!\p_{u_j}H\big(\pfrac{x}{N}\big) \big(\eta_s(x+e_j)-\eta_s(x)\big) \, ds\,\Big|\,\Big]\,.
\end{equation}
Adding and subtracting $\eta^{\eps N}_s(x)$ and $\eta^{\eps N}_s(x+e_j)$, we bound \eqref{markovex} from above by
\begin{equation*}
\begin{split}
&\varlimsup_{N\to\infty} \delta^{-1} \bb E^\beta_{\mu_N}\Big[\, \Big| \int_0^{t}
\frac{1}{N^{d-1}}\hspace{-0.3cm}\sum_{\topo{e=\{x,x+e_j\}}{e \text{ is a slow bond}}}\!\!\p_{u_j}H\big(\pfrac{x}{N}\big) \big(\eta_s(x+e_j)-\eta_s^{\eps N}(x+e_j)\big) \, ds\,\Big|\,\Big]\\
&+\varlimsup_{N\to\infty} \delta^{-1}\bb E^\beta_{\mu_N}\Big[\, \Big| \int_0^{t}
\frac{1}{N^{d-1}}\hspace{-0.3cm}\sum_{\topo{e=\{x,x+e_j\}}{e \text{ is a slow bond}}}\!\!\p_{u_j}H\big(\pfrac{x}{N}\big) \big(\eta^{\eps N}_s(x+e_j)-\eta_s^{\eps N}(x)\big) \, ds\,\Big|\,\Big]\\
&+\varlimsup_{N\to\infty} \delta^{-1} \bb E^\beta_{\mu_N}\Big[\, \Big| \int_0^{t}
\frac{1}{N^{d-1}}\hspace{-0.3cm}\sum_{\topo{e=\{x,x+e_j\}}{e \text{ is a slow bond}}}\!\!\p_{u_j}H\big(\pfrac{x}{N}\big) \big(\eta^{\eps N}_s(x)-\eta_s(x)\big) \, ds\,\Big|\,\Big] \,.
\end{split}
\end{equation*}
Since $|\{\eta_s^{\varepsilon N}(x+e_j)-\eta_s^{\varepsilon N}(x)\}|\leq \frac{2(\varepsilon N)^{d-1}}{(\varepsilon N)^d}=\frac{2}{\varepsilon N}$, $|\Gamma_N|$ is of order $N^{d-1}$ and  $\Vert\p_{u_j}H\Vert_\infty<\infty$, the second term above vanishes. For the remaining terms, we  apply Lemma~\ref{replacement}, finishing the proof. 
\end{proof}

\subsection{Characterization of limit points for \texorpdfstring{$\beta=1$}{beta=1}.} \label{6.2}
This subsection is devoted to the proof of the next proposition. Keep in mind that Proposition~\ref{Prop5.7} allows us to write $\pi(t,u)=\rho(t,u)du$ when considering the measure $\bb Q^\beta_*$.
\begin{proposition}\label{prop62one}
Let $H\in C^2(\bb T^d\backslash \partial\Lambda)$. For all $\delta>0$,
\begin{equation} \label{6.4}
\begin{split}
&\bb Q^\beta_*\Big[\pi.: \sup_{0\leq t\leq T}\Big|\< \rho_t, H\> - \< \rho_0 , H\> - \int_0^t \< \rho_s , \Delta H \> \, ds\\
&-\int_0^t\int_{\p\Lambda}\rho_s(u^+)\sum_{j=1}^d\p_{u_j} H(u^+)\<\vec{\zeta}(u),e_j\>\,dS(u)ds\\
&+\int_0^t \int_{\p\Lambda}\rho_s(u^-)\sum_{j=1}^d\p_{u_j} H(u^-)\<\vec{\zeta}(u),e_j\>\,dS(u)ds\\
&+\!\int_0^t\!\! \int_{\p\Lambda}\!\!\!\alpha (\rho_s(u^-)-\rho_s(u^+))(H(u^+)-H(u^-))\sum_{j=1}^d|\<\vec{\zeta}(u), e_j\>|\,dS(u)ds\Big| >\delta\,\Big]=0.
\end{split}
\end{equation}
\end{proposition}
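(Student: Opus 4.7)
The argument should follow the blueprint of Proposition~\ref{6.1.1} for $\beta\in[0,1)$, with the essential new feature being that test functions $H$ may be discontinuous across $\p\Lambda$, which forces the generator $N^2\bb L_N$ to produce both surface-integral corrections of the Laplacian and the Robin jump term. By Portmanteau's theorem (and the fact that the supremum in~\eqref{6.4} is a continuous functional on Skorohod space, cf.\ Proposition~\ref{A.3}), I reduce the claim to showing the analogous probability under $\bb Q^{\beta,N}_{\mu_N}$ vanishes. Introducing the martingale $M_t^N(H)$ from~\eqref{M}, Lemma~\ref{bm}(b) handles it in the $N\to\infty$ limit since $H\in C^2(\bb T^d\setminus\p\Lambda)$, so the task reduces to controlling
\begin{equation*}
\bb E^\beta_{\mu_N}\Big[\,\Big|\int_0^t\bigl\{\<\pi^N_s, N^2\bb L_N H\>-\mathrm{RHS}_s\bigr\}\,ds\,\Big|\,\Big],
\end{equation*}
where $\mathrm{RHS}_s$ denotes the integrand appearing on the right side of~\eqref{6.4} evaluated at time $s$, and then passing to the limit by Markov's inequality after discretizing $[0,t]$.

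Next I would split $\<\pi^N_s, N^2\bb L_N H\>$ according to whether $x\in \Gamma_N$ or $x\notin \Gamma_N$. For $x\notin \Gamma_N$ all adjacent bonds carry rate one, and because $H$ is $C^2$ separately on $\Lambda$ and on $\Lambda^\complement$, Taylor expansion gives $N^2\bb L_N H(x/N)=\Delta H(x/N)+o(1)$ away from a discrete strip of width $\mc O(1/N)$ around $\p\Lambda$. Summing against $\eta_s(x)/N^d$ yields the bulk term $\<\rho_s,\Delta H\>$ in the limit. The delicate point is that the sites one step away from $\p\Lambda$ feel a \emph{missing} rate-one Laplacian contribution (because the would-be neighbor is replaced by a slow bond); by discrete summation by parts these defects collapse into Riemann sums for the two surface integrals $\int_{\p\Lambda}\rho_s(u^\pm)\sum_{j=1}^d\p_{u_j}H(u^\pm)\<\vec{\zeta}(u),e_j\>\,dS(u)$, the factor $\<\vec{\zeta}(u),e_j\>$ encoding the local density of axis-$e_j$ bonds crossing a surface element of $\p\Lambda$.

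The contributions from $x\in\Gamma_N$ are where the Robin term is generated. A slow bond $\{x,x+e_j\}$ carries rate $\alpha/N$, so the corresponding term of $N^2\bb L_N H(x/N)$ has the form $\alpha N\bigl(H((x+e_j)/N)-H(x/N)\bigr)$, which is $O(N)$ because of the jump of $H$ across $\p\Lambda$; multiplied by $1/N^d$ and summed over $O(N^{d-1})$ slow bonds, this is precisely $O(1)$. Pairing the two endpoints of each slow bond and taking $x/N\to u^-$, $(x+e_j)/N\to u^+$, the slow-bond contribution rearranges as
\begin{equation*}
\frac{\alpha}{N^{d-1}}\sum_{\{x,x+e_j\}\text{ slow}}\bigl(\eta_s(x+e_j)-\eta_s(x)\bigr)\bigl(H(u^+)-H(u^-)\bigr)+o(1).
\end{equation*}
Applying the Replacement Lemma~\ref{replacement2} I replace $\eta_s(x)$ and $\eta_s(x+e_j)$ by the one-sided averages $\eta_s^{\eps N}(x)$ over $C_N[x,\eps N]$ from~\eqref{localmean1}, which converge (using Proposition~\ref{Prop5.7} and the Sobolev trace theorem for $\mc H^1(\bb T^d\setminus\p\Lambda)$) to the traces $\rho_s(u^-)$ and $\rho_s(u^+)$ respectively. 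This produces exactly the Robin boundary integral with the claimed weight $\sum_{j=1}^d|\<\vec{\zeta}(u),e_j\>|$.

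The main obstacle is the geometric counting step hidden in the last paragraph: one must show that, for any continuous $f:\p\Lambda\to\bb R$,
\begin{equation*}
\frac{1}{N^{d-1}}\sum_{\{x,x+e_j\}\text{ slow}} f\bigl(\pfrac{x}{N}\bigr)\; \xrightarrow[N\to\infty]{}\; \int_{\p\Lambda} f(u)\,|\<\vec{\zeta}(u),e_j\>|\,dS(u),
\end{equation*}
uniformly on the smooth hypersurface $\p\Lambda$. This is essentially the co-area formula discretized: a slow bond in direction $e_j$ exists over the small patch $dS$ of $\p\Lambda$ near $u$ if and only if the segment of length $1/N$ in direction $e_j$ crosses $\p\Lambda$ there, and the density of such crossings is proportional to the projection $|\<\vec{\zeta}(u),e_j\>|$. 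Making this rigorous---together with the trace convergence $\eta_s^{\eps N}\to\rho_s(u^\pm)$ after letting $N\to\infty$ and then $\eps\downarrow 0$---is the crux; once these two pieces are in place, the remaining bookkeeping (time discretization, Markov/Doob, disposal of $o(1)$ Taylor remainders on non-slow boundary bonds) proceeds exactly as in Proposition~\ref{6.1.1}.
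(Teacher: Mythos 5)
Your outline of the discrete analysis --- the bulk/boundary decomposition of $\<\pi^N_s,N^2\bb L_N H\>$, the generation of the Robin term from the $\alpha/N$ bonds, the replacement lemma, and the Riemann-sum-to-surface-integral convergence with weight $|\<\vec{\zeta}(u),e_j\>|$ (which is exactly the paper's Lemma~\ref{lemma65}) --- matches the paper's proof. The genuine gap is at your very first step. You invoke Portmanteau's theorem on the event in \eqref{6.4} itself, asserting that the supremum there is a continuous functional on $D([0,T],\mc M)$ and citing Proposition~\ref{A.3}. That proposition only covers functionals of the form $\sup_t|\<\pi_t,G_1\>-\<\pi_0,G_2\>-\int_0^t\<\pi_s,G_3\>\,ds|$ with $G_i$ continuous on $\bb T^d$; the functional in \eqref{6.4} additionally contains surface integrals of the one-sided traces $\rho_s(u^{\pm})$ on $\p\Lambda$. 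These traces are not defined for a generic $\pi\in\mc M$ (in particular not for the atomic empirical measures $\pi^N_s$), and even on absolutely continuous paths the map $\pi\mapsto\int_{\p\Lambda}\rho(u^{\pm})\cdots dS(u)$ is not continuous for the weak topology of measures. So the reduction to a statement about $\bb Q^{\beta,N}_{\mu_N}$, as you state it, is not available; the paper explicitly flags this as the technical obstacle of the whole argument.

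The repair --- the one idea your proposal is missing --- is to mollify \emph{before} applying Portmanteau: replace $\rho_s(u^{\pm})$ by the one-sided convolutions $(\rho_s\ast\iota_\eps)(u^{\pm})$ with the approximate identity \eqref{approxidentity}. The mollified functional is continuous on the Skorohod space (it is integration of $\pi_s$ against bounded kernels), so Portmanteau applies to it, and the discrete analysis you describe then goes through with the one-sided averages $\eta_s^{\eps N}(x)$ standing in for the traces, via the identity \eqref{remark}. Separately, the de-mollification error $\int_0^t\int_{\p\Lambda}\bigl((\rho_s\ast\iota_\eps)(u^{\pm})-\rho_s(u^{\pm})\bigr)\cdots dS\,ds$ is estimated directly under the limit measure $\bb Q^\beta_*$ and shown to vanish as $\eps\downarrow0$ using the energy estimate (Proposition~\ref{Prop5.7}, i.e.\ $\rho\in\sobH$) and the resulting regularity of $\rho$ away from $\p\Lambda$. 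You do gesture at this trace convergence, but you place it inside the discrete analysis after the Portmanteau step, where it cannot rescue an ill-defined functional; the order of operations is essential here.
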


\begin{figure}[htb!]
\centering
\begin{tikzpicture}
\begin{scope}
\begin{scope}[yshift=0.2cm]
\draw[draw=black,thick] (1,2) to[out=45,in=225] (3,3) to[out=45,in=0] (3,0) to[out=180,in=225] (1,2);
\end{scope}
\draw (2.25,1.75) node {$\Lambda$};
\draw (4.25,3.25) node {$\Lambda^\complement$};
\draw[color=black,step=0.5cm] (0,-1) grid (5,4);
\draw (4.5,4) node[above]{$N^{-1}\bb T^d_N$};
\shade[ball color=black](3,0.5) circle (0.1);
\shade[ball color=black](3.5,0.5) circle (0.1);
\shade[ball color=black](3.5,1) circle (0.1);
\shade[ball color=black](3.5,1.5) circle (0.1);
\shade[ball color=black](3.5,2) circle (0.1);
\shade[ball color=black](3.5,2.5) circle (0.1);
\shade[ball color=black](3.5,3) circle (0.1);
\shade[ball color=black](3,3) circle (0.1);
\shade[ball color=black](2.5,2.5) circle (0.1);
\shade[ball color=black](2,2.5) circle (0.1);
\shade[ball color=black](1.5,2.5) circle (0.1);
\shade[ball color=black](1,2) circle (0.1);
\shade[ball color=black](1,1.5) circle (0.1);
\shade[ball color=black](1.5,1) circle (0.1);
\shade[ball color=black](2,0.5) circle (0.1);
\shade[ball color=black](2.5,0.5) circle (0.1);
\end{scope}

\begin{scope}[xshift=6cm]
\begin{scope}[yshift=0.2cm]
\draw[draw=black,thick] (1,2) to[out=45,in=225] (3,3) to[out=45,in=0] (3,0) to[out=180,in=225] (1,2);
\end{scope}
\draw (2.25,1.75) node {$\Lambda$};
\draw (4.25,3.25) node {$\Lambda^\complement$};
\draw[color=black,step=0.5cm] (0,-1) grid (5,4);
\draw (4.5,4) node[above]{$N^{-1}\bb T^d_N$};
\shade[ball color=lightgray](3,0.5) circle (0.1);
\shade[ball color=lightgray](3.5,0.5) circle (0.1);
\shade[ball color=black](3.5,3) circle (0.1);
\shade[ball color=black](3,3) circle (0.1);
\shade[ball color=black](2.5,2.5) circle (0.1);
\shade[ball color=black](2,2.5) circle (0.1);
\shade[ball color=black](1.5,2.5) circle (0.1);
\shade[ball color=black](1,2) circle (0.1);
\shade[ball color=lightgray](1,1.5) circle (0.1);
\shade[ball color=lightgray](1.5,1) circle (0.1);
\shade[ball color=lightgray](2,0.5) circle (0.1);
\shade[ball color=lightgray](2.5,0.5) circle (0.1);
\begin{scope}[xshift=-0.25cm,yshift=-0.25cm]
\draw[very thick,->] (0,-1) -- (0,0) node[above]{$e_2$};
\draw[very thick,->] (0,-1) -- (1,-1) node[right]{$e_1$};
\end{scope}
\end{scope}
\end{tikzpicture}
\caption{In the left, an illustration of the set $\Gmenos$, whose elements are represented by black balls. In the right, an illustration of the sets $\GmenosjL$ and $\GmenosjR$ for $j=2$, whose elements are represented by gray and black balls, respectively.} \label{Fig6}
\end{figure}
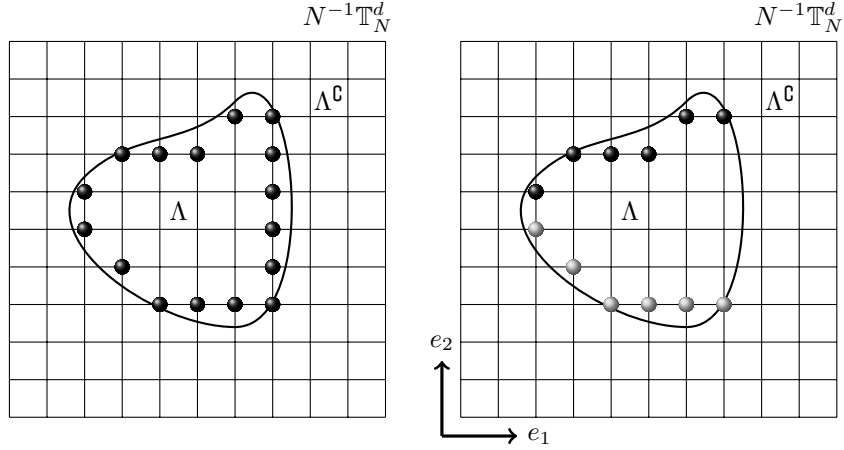
Let us gather some ingredients for the proof of above.  The first one is a suitable expression for $N\bb L_N $ over $\Gamma_N$. Define
\begin{equation}
\begin{split}
\Gmenos & \;=\; \Gamma_N \cap \big\{x\in\bb T^d_N\,: \, \pfrac{x}{N}\in \Lambda \big\}\quad \text{ and }\\
\Gmais & \; =\;\Gamma_N \cap \big\{x\in\bb T^d_N\,: \, \pfrac{x}{N}\in \Lambda^\complement \big\}
\end{split}
\end{equation}
Such a  notation has been chosen to agree with \eqref{maismenos}. Let us focus on $\Gmenos$, being the analysis for $\Gmais$ completely analogous. 
 It is convenient to consider the decomposition  $\Gmenos=\bigcup_{j=1}^d \Gmenosj$, where 
 \begin{align*}
 &\Gmenosj \;=\; \GmenosjL \cup \GmenosjR\,, \qquad \text{with}
 \end{align*}
  \begin{align*}
 &\GmenosjL \;=\;\Big\{x\in\Gmenos\,: \,  \frac{x-e_j}{N}\in \Lambda^\complement \Big\}\;\text{ and }\;
 \GmenosjR \;=\; \Big\{x\in\Gmenos\,: \,  \frac{x+e_j}{N}\in \Lambda^\complement \Big\}\,,
\end{align*}
see Figure~\ref{Fig6} for an illustration. Note that $\GmenosjR$ and $\GmenosjL$ are not  necessarily disjoint for a fixed $j$. Nevertheless, due to the smoothness of $\p\Lambda$, the number of elements in the intersection of these two sets is of order $\mc O(N^{d-2})$, hence negligible to our purposes. We will henceforth assume that $\GmenosjR$ and $\GmenosjL$ are disjoint sets for all $j=1,\ldots, d$. 
\begin{remark}\rm
At first sight, the reader may imagine that $\Gmenos$ is equal to $\GmenosjL\cup \GmenosjR$ for any $j$, or at least very to close to. This is false, as illustrated by Figure~\ref{Fig6}. Moreover, for $i\neq j$ and large $N$, the sets  $\Gmenosj$ and $\Gmenosi$ in general are not disjoint with a no negligible intersection.
\end{remark}

Define now $$N\bb L^j_N H(\pfrac{x}{N})=
N\xi^N_{x,x+e_j}\big(H(\pfrac{x+e_j}{N})-H(\pfrac{x}{N})\big)+N\xi^N_{x,x-e_j}\big(H(\pfrac{x-e_j}{N})-H(\pfrac{x}{N})\big)\,.$$
Then, by By Fubini's Lemma,
\begin{align} 
&\sum_{x\in\Gmenos}N\bb L_N H\big(\pfrac{x}{N}\big)\eta^{\varepsilon N}_s(x)\;=\; \sum_{x\in\Gmenos}\sum_{j=1}^{d}N\bb L^j_N H\big(\pfrac{x}{N}\big)\eta^{\varepsilon N}_s(x)\nonumber \\
&=\sum_{j=1}^{d}\Big\{ \sum_{x\in\GmenosjR}N\bb L^j_N H\big(\pfrac{x}{N}\big)\eta^{\varepsilon N}_s(x)+\sum_{x\in\GmenosjL}N\bb L^j_N H\big(\pfrac{x}{N}\big)\eta^{\varepsilon N}_s(x)\Big\}\,.\label{sumnj}
\end{align}
If $x\in \GmenosjR$, then  $\xi^N_{x,x+e_j}=\alpha/N$ and $\xi^N_{x,x-e_j}=1$, see Figure~\ref{Fig5}. In this case, 
\begin{equation*}
N\bb L^j_N H\big(\pfrac{x}{N}\big)\;=\;\alpha \Big(H\big(\pfrac{x+e_j}{N}\big)-H\big(\pfrac{x}{N}\big)\Big)-\p_{u_j}H\big(\pfrac{x}{N}\big)+\mc O(N^{-1})\,.
\end{equation*}
On the other hand, if $x\in \GmenosjL$, then  $\xi^N_{x,x-e_j}=\alpha/N$ and $\xi^N_{x,x+e_j}=1$. In this case, 
\begin{equation*}
N\bb L^j_N H\big(\pfrac{x}{N}\big)\;=\;\p_{u_j}H\big(\pfrac{x}{N}\big)+\alpha\Big(H\big(\pfrac{x-e_j}{N}\big)-H\big(\pfrac{x}{N}\big)\Big)+\mc O(N^{-1})\,.
\end{equation*}
Now, let $\uu:\bb T^d\to\p\Lambda$ be a  function such that 
\begin{equation}\label{uu}
\Vert \uu(u) -u\Vert\;=\; \min_{v\in\p\Lambda} \Vert v -u\Vert\,,
\end{equation}
and $\uu$ is continuous in a  neighborhood of $\p\Lambda$. 
That is, $\uu$ maps $u\in \bb T^d$ to some of its closest points over $\p\Lambda$ and $\uu$ is continuous on the set $(\p\Lambda)^\eps=\{u\in \bb T^d: \text{dist}(u,\p\Lambda)< \eps\}$ for some small $\eps>0$. There are more than  one  function fulfilling \eqref{uu}, but any choice among them will be satisfactory for our purposes, once this function  is continuous near $\p\Lambda$.  With this mind we can  rewrite \eqref{sumnj}, achieving the formula
\begin{equation}\label{sumnj2}
\begin{split}
&\frac{1}{N^{d-1}}\sum_{x\in\Gmenos}N\bb L_N H\big(\pfrac{x}{N}\big)\eta^{\varepsilon N}_s(x)\\
 & \;=\;
\frac{1}{N^{d-1}}\sum_{j=1}^d \Bigg\{\sum_{x\in\GmenosjR}\Big[\alpha \big(H(\uu^+)-H(\uu^-)\big)-\p_{u_j}H(\uu^-)\Big]\eta^{\varepsilon N}_s(x)\\
&\hspace{2.5cm}+\sum_{x\in\GmenosjL}\Big[\p_{u_j}H(\uu^-)+\alpha\big(H(\uu^+)-H(\uu^-)\big)\Big]\eta^{\varepsilon N}_s(x)\Bigg\}\,.
\end{split}
\end{equation}
plus a negligible error, where by $H(\uu^-)$ and $H(\uu^+)$ are the sided limits of $H$ at $\uu$. The dependence of $\uu$  on $x/N$ will be dropped to not overload notation.  
Defining 
 \begin{align*}
 &\Gmaisj \;=\; \GmaisjL \cup \GmaisjR\,, \qquad \text{with}
 \end{align*}
\begin{align*}
 \GmaisjL =\Big\{x\in\Gmais\,: \,  \frac{x+e_j}{N}\in \Lambda \Big\}\;\text{ and }\;
 \GmaisjR = \Big\{x\in\Gmais\,: \,  \frac{x-e_j}{N}\in \Lambda \Big\}\,\,,
\end{align*}
we similarly  have 
\begin{equation}\label{sumnj3}
\begin{split}
&\frac{1}{N^{d-1}}\sum_{x\in\Gmais}N\bb L_N H\big(\pfrac{x}{N}\big)\eta^{\varepsilon N}_s(x)\\
 & \;=\;
\frac{1}{N^{d-1}}\sum_{j=1}^d \Bigg\{\sum_{x\in\GmaisjR}\Big[\p_{u_j}H(\uu^+)+\alpha \big(H(\uu^-)-H(\uu^+)\big)\Big]\eta^{\varepsilon N}_s(x)\\
&\hspace{2.5cm}+\sum_{x\in\GmaisjL}\Big[\alpha\big(H(\uu^-)-H(\uu^+)\big)-\p_{u_j}H(\uu^+)\Big]\eta^{\varepsilon N}_s(x)\Bigg\}\,.
\end{split}
\end{equation}

The second ingredient is about convergence of sums over $\Gamma_N$ towards   integrals over $\p\Lambda$. Let us review some standard facts about integrals over  surfaces.
Consider a smooth compact manifold $\mc M\subset \bb R^d$ of dimension $(d-1)$. Assume that $\mc M$ is the graph of a function $f: R\subset \bb R^{d-1}\to \bb R$, that is, $\mc M=\{(x,f(x)): x\in R\}$. Then, given a smooth function $g:\mc M\to \bb R$, the surface integral of $g$ over $\mc M$ will be given by
\begin{equation}\label{eqGdS}
\begin{split}
&\int_{\mc M}g(u)\, dS(u) \;=\; \int_{R}g(x,f(x)) \frac{dx}{|\cos (\gamma(x,f(x)))|}\\
& =\; \int_{R}g\big(x_1,\ldots,x_{d-1},f(x_1,\ldots,x_{d-1})\big) \frac{dx_1\cdots dx_{d-1}}{|\<\vec{\zeta}(x_1,\ldots,x_{d-1}),e_d\>|}\,,
\end{split}
\end{equation}
where $\gamma(x,f(x))$ is defined as the angle between the normal exterior vector $\vec{\zeta}(u)=\vec{\zeta}(x_1,\ldots,x_{d-1})$ and $e_d$, the $d$-th element of the canonical basis of $\bb R^d$. Of course,  a manifold in general is only locally a graph of a function as above. Nevertheless, the notion of partition of unity allows to use this local property to evaluate a surface integral. Recall the definition of $\uu$ given in \eqref{uu}.
\begin{lemma} \label{lemma65}
Let $g:\Lambda\backslash (\p\Lambda)\subset \bb T^d \to\bb R $ be a  function  which is continuous near $\p\Lambda$ with  an extension to $\Lambda$ which is also continuous near $\p\Lambda$. Then,
\begin{align}
& \int_{\p\Lambda}g(u^-)|\<\vec{\zeta}(u),e_j\>|\,dS(u)  \;=\; \lim_{N\to\infty}\frac{1}{N^{d-1}}\sum_{x\in\Gmenosj} g\big(\pfrac{x}{N}\big)  \quad \text{ and} \label{eq617}\\
&\int_{\p\Lambda}g(u^-)\<\vec{\zeta}(u),e_j\>\,dS(u)     \;=\;\lim_{N\to\infty}\frac{1}{N^{d-1}}\Bigg[\sum_{x\in\GmenosjR} g\big(\pfrac{x}{N}\big) - \sum_{x\in\GmenosjL} g\big(\pfrac{x}{N}\big)\Bigg] \,.\label{eq617b}
\end{align}
Analogously, if $g:\Lambda^\complement\subset \bb T^d \to\bb R $ 
is a  function  which is continuous near $\p\Lambda$ with  an extension to the closure of  $\Lambda^\complement$ which is also continuous near $\p\Lambda$, then
\begin{align}
& \int_{\p\Lambda}g(u^+)|\<\vec{\zeta}(u),e_j\>|\,dS(u)  \;=\; \lim_{N\to\infty}\frac{1}{N^{d-1}}\sum_{x\in\Gmaisj} g\big(\pfrac{x}{N}\big)  \quad \text{ and} \label{eq617AA}\\
&\int_{\p\Lambda}g(u^+)\<\vec{\zeta}(u),e_j\>\,dS(u)     \;=\;\lim_{N\to\infty}\frac{1}{N^{d-1}}\Bigg[\sum_{x\in\GmaisjR} g\big(\pfrac{x}{N}\big) - \sum_{x\in\GmaisjL} g\big(\pfrac{x}{N}\big)\Bigg] \,.\label{eq617bAA}
\end{align}
\end{lemma}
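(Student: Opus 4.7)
The plan is to prove \eqref{eq617} in detail; the remaining identities \eqref{eq617b}--\eqref{eq617bAA} then follow from the same counting mechanism, either with a sign-tracking refinement (for \eqref{eq617b}) or by swapping the roles of $\Lambda$ and $\Lambda^\complement$ together with the boundary trace (for \eqref{eq617AA}--\eqref{eq617bAA}).

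A standard partition-of-unity argument reduces matters to a single chart. Since $\p\Lambda$ is a compact smooth $(d-1)$-dimensional surface, I would cover it by finitely many open sets $V_1,\ldots,V_m$ such that, after permuting coordinates inside each $V_i$, $\p\Lambda\cap V_i$ is the graph $\{u_d=f_i(u_1,\ldots,u_{d-1})\}$ of a smooth function $f_i$ on an open $R_i\subset\bb R^{d-1}$, with $\Lambda\cap V_i=\{u_d<f_i\}$. Choosing a subordinate partition of unity $\{\chi_i\}$ and using linearity of both sides in $g$, it suffices to establish \eqref{eq617} with $g$ replaced by $g\chi_i$; equivalently, to work in a single graph chart.

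In such a chart, the case $j=d$ is a direct Riemann-sum computation. Since $\Lambda=\{u_d<f\}$, a site $x$ with $x/N\in\Lambda$ belongs to $\Gmenosj$ with $j=d$ precisely when $(x_d+1)/N\geq f(x_1/N,\ldots,x_{d-1}/N)>x_d/N$, so there is exactly one such $x_d=\lfloor Nf(x_1/N,\ldots,x_{d-1}/N)\rfloor$ per column $(x_1,\ldots,x_{d-1})$. The left-hand side of \eqref{eq617} becomes the Riemann sum
\begin{equation*}
\frac{1}{N^{d-1}}\sum_{(x_1,\ldots,x_{d-1})}(g\chi)\Big(\tfrac{x_1}{N},\ldots,\tfrac{x_{d-1}}{N},\tfrac{\lfloor Nf\rfloor}{N}\Big)\;\xrightarrow[N\to\infty]{}\;\int_R g(x,f(x)^-)\,dx\,,
\end{equation*}
using uniform continuity of the boundary extension of $g$ near $\p\Lambda$; by \eqref{eqGdS} this is exactly $\int_{\p\Lambda}g(u^-)|\<\vec\zeta,e_d\>|\,dS$, since $|\<\vec\zeta,e_d\>|\,dS=dx$ in this parametrization. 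For $j<d$, I would fix the remaining coordinates $y=(x_1,\ldots,\widehat{x_j},\ldots,x_{d-1},x_d)$ and look at the discrete line in direction $e_j$: modulo an $\mc O(N^{d-2})$ set of tangential and degenerate lines, the sites of $\Gmenosj$ on such a line are in bijection with the transverse solutions $u_j^*$ of $f(x_1/N,\ldots,u_j,\ldots,x_{d-1}/N)=x_d/N$. Summing over $y$ and normalizing, the density of crossings in $y$ is $|\partial_j f|$ (area formula applied to $\hat u\mapsto(u_1,\ldots,\widehat{u_j},\ldots,u_{d-1},f(\hat u))$), so the limit equals $\int g(x,f(x)^-)|\partial_j f(x)|\,dx=\int_{\p\Lambda}g(u^-)|\<\vec\zeta,e_j\>|\,dS$, using the explicit formulas $\vec\zeta=(-\nabla f,1)/\sqrt{1+|\nabla f|^2}$ and $dS=\sqrt{1+|\nabla f|^2}\,dx$. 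The signed identity \eqref{eq617b} follows analogously: the sites in $\GmenosjL$ (resp.\ $\GmenosjR$) arise from entry crossings into $\Lambda$ (resp.\ exit), which correspond to $\partial_j f>0$ (resp.\ $\partial_j f<0$); thus $\sum_{\GmenosjR}g-\sum_{\GmenosjL}g$ picks up a factor $-\operatorname{sign}(\partial_j f)$ at each crossing, and with the Jacobian $|\partial_j f|$ this yields $\int g(x,f(x)^-)(-\partial_j f)(x)\,dx=\int_{\p\Lambda}g(u^-)\<\vec\zeta,e_j\>\,dS$. Identities \eqref{eq617AA}--\eqref{eq617bAA} are proved by the same argument with $\Lambda$ replaced by $\Lambda^\complement$, the graph representation $\{u_d>f\}$, and the $u^+$ trace.

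The main obstacle is the careful control of the degenerate lines: those where the coordinate direction $e_j$ is tangent to $\p\Lambda$ (so a transverse crossing fails or $\partial_j f$ vanishes on the relevant level set), together with the overcounting in $\GmenosjL\cap\GmenosjR$. These pieces form an $\mc O(N^{d-2})$ subset of $\Gmenosj$ and hence contribute $o(1)$ after the $N^{1-d}$ normalization, but establishing this negligibility requires the compactness and smoothness of $\p\Lambda$ and a careful choice of the coordinate systems in the partition of unity so that each graph representation is transverse to the corresponding coordinate axis.
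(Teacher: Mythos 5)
Your proposal is correct and rests on the same basic mechanism as the paper's proof: the normalized count of lattice sites in $\Gmenosj$ near a surface patch converges to $|\<\vec{\zeta},e_j\>|\,dS$, i.e.\ to the projected area of the patch on the hyperplane orthogonal to $e_j$, and the signed version follows because $\GmenosjR$ and $\GmenosjL$ correspond to the two signs of $\<\vec{\zeta},e_j\>$. The difference is one of packaging. The paper states this once as the claim \eqref{eq123}, with the Jacobian absorbed into the test function $h=g\,|\<\vec{\zeta}\circ\uu,e_j\>|$; requiring $h$ to vanish on $\{u\in\p\Lambda:\<\vec{\zeta}(u),e_j\>=0\}$ is precisely how it neutralizes the tangential directions, and it never needs to fix a preferred graph coordinate. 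You instead localize by a partition of unity to charts $\{u_d=f\}$, treat $j=d$ by the one-site-per-column observation and $j<d$ by a crossing count with the coarea factor $|\p_j f|$; your explicit formulas $\vec{\zeta}=(-\nabla f,1)/\sqrt{1+|\nabla f|^2}$, $dS=\sqrt{1+|\nabla f|^2}\,dx$ and the sign bookkeeping for \eqref{eq617b} all check out against the paper's conventions. One caveat: your claim that the degenerate lines form an $\mc O(N^{d-2})$ subset of $\Gmenosj$ is stronger than what is true in general --- the set $\{\<\vec{\zeta},e_j\>=0\}$ may have positive surface measure (e.g.\ a flat piece of $\p\Lambda$ parallel to $e_j$), and the correct reason near-tangential regions are negligible is not cardinality but the smallness of the density $|\<\vec{\zeta},e_j\>|$ there (which is exactly what the paper's vanishing condition on $h$ encodes). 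This does not break your argument, but that step should be phrased as a smallness estimate rather than a counting one.
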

\begin{proof}
In view of the previous discussion, we claim that
\begin{equation}\label{eq123} 
\lim_{N\to\infty}\frac{1}{N^{d-1}}\sum_{x\in\Gmenosj}\frac{h\big(\pfrac{x}{N}\big)}{ \big|\big\<\vec{\zeta}\big(\uu(\pfrac{x}{N})\big),e_j\big\>\big|}\;=\; \int_{\p\Lambda}h(u^-)\,dS(u)\,.
\end{equation}
for any continuous function $h: \Lambda\to \bb R$ such that $h(u)=0$ on the set $\{u\in \p\Lambda: \<\vec{\zeta}(u),e_j\>=0\}$.
This is due to the fact that the sum in the left hand side of \eqref{eq617} is equal to a Riemann sum for the integral on the right hand side of \eqref{eqGdS} modulus a small error. To see this, it is enough to note that if $x\in\Gmenos$, then $x/N$  is at a distance less or equal than $1/N$ to $\p\Lambda$, and recall that $\Lambda$ is compact, thus any continuous function over $\Lambda$ is uniformly continuous.

 Consider now the function $h: \Lambda\to \bb R$ given by
\begin{equation*}
h(u)\;:=\; g(u) \, |\<\vec{\zeta}\big(\uu(u)\big),e_j\>|\,.
\end{equation*}
Since $\uu(u)=u$ for $u\in\p\Lambda$, we have that $h(u)=0$ on the set $\{u\in \p\Lambda: \<\vec{\zeta}(u),e_j\>=0\}$. 
 Then, considering this particular function $h$ in \eqref{eq123}  leads to \eqref{eq617}.
The limit \eqref{eq617b} can be derived from \eqref{eq617} 
noticing that,  for  $N$ sufficiently large,
\begin{itemize}
\item if $x\in  \GmenosjR$, then $ \<\vec{\zeta}\big(\uu(x/N)\big), e_j\>>0$ and 
\item if $x\in  \GmenosjL$, then $\<\vec{\zeta}\big(\uu(x/N)\big), e_j\> <0 $,
\end{itemize}   
see Figure~\ref{Fig5} for support. The proofs for \eqref{eq617AA} and \eqref{eq617bAA} are analogous.
\end{proof}

\begin{proof}[Proof of Proposition~\ref{prop62one}]
The fact that boundary integrals are not well-defined in the whole Skorohod space $\mc D([0,T ],\mc M)$ forbids us to directly apply Portmanteau's Theorem. To circumvent this  technical obstacle, fix $\varepsilon > 0$ which will be taken small later. Adding and subtracting the convolution of $\rho(t,u)$ with the approximation of identity $\iota_\eps$ defined in \eqref{approxidentity}, we bound the probability in \eqref{6.4} by the sum of 
\begin{equation} \label{6.5}
\begin{split}
&\bb Q^\beta_*\Big[\pi.:\sup_{0\leq t\leq T}\Big|\<\rho_t, H\>- \<\rho_0, H \>-\int_0^t\<\rho_s ,\Delta H\> \,ds \Big.\\
&-\int_0^t\int_{\p\Lambda}(\rho_s\ast\iota_\eps)(u^+)\sum_{j=1}^{d}\p_{u_j} H(u^+)\<\vec{\zeta}(u),e_j\>\,dS(u)ds\\
&+\int_0^t\int_{\p\Lambda} (\rho_s\ast\iota_\eps)(u^-)\sum_{j=1}^{d}\p_{u_j} H(u^-)\<\vec{\zeta}(u),e_j\>\,dS(u)ds\\
&+\int_0^t \int_{\p\Lambda}\alpha ((\rho_s\ast\iota_\eps)(u^-)-(\rho_s\ast\iota_\eps)(u^+))\\
&\hspace{2cm}\times (H(u^+)-H(u^-))\sum_{j=1}^d|\<\vec{\zeta}(u), e_j\>|\,dS(u)ds\Big|>\delta/2\Big]
\end{split}
\end{equation}
and 
\begin{equation}\label{second} 
\begin{split}
&\bb Q^\beta_*\Big[\pi.:\, \sup_{0\leq t\leq T}\Big|\int_0^t\int_{\p\Lambda}\Big((\rho_s\ast\iota_\eps)(u^+)-\rho_s(u^+)\Big)\sum_{j=1}^{d} H(u^+)\<\vec{\zeta}(u),e_j\>\,dS(u)ds\Big.\Big.\\
&-\int_0^t\int_{\p\Lambda}\Big((\rho_s\ast\iota_\eps)(u^-)-\rho_s(u^-)\Big)\sum_{j=1}^{d}\p_{u_j}H(u^-)\<\vec{\zeta}(u),e_j\>\,dS(u)ds\Big.\\
&-\int_0^t\int_{\p\Lambda}\alpha \Big((\rho_s\ast\iota_\eps)(u^-)-\rho_s(u^-)\Big)(H(u^+)-H(u^-))\sum_{j=1}^d|\<\vec{\zeta}(u), e_j\>|\,dS(u)ds\Big.\\
&+\!\int_0^t\!\!\int_{\p\Lambda}\!\!\!\alpha \Big((\rho_s\ast\iota_\eps)(u^+)-\rho_s(u^+)\Big)\\
&\hspace{2cm}\times(H(u^+)-H(u^-))\sum_{j=1}^d|\<\vec{\zeta}(u), e_j\>|\,dS(u)ds\Big|>\delta/2\Big].
\end{split}
\end{equation}
where $\iota_\eps$ and the convolution $\rho_s\ast\iota_\eps$ were defined in \eqref{convolution}. Adapting results of \cite[Chapter III]{Adams} to our context, the reader can check that functions in the Sobolev space $\sobH$ are continuous in $\bb T^d\backslash \p\Lambda$. Thus,  Lemma~\ref{Prop5.7} gives us that \eqref{second} vanishes as $\eps\to 0$.
It remains to deal with \eqref{6.5}.
By Portmanteau's Theorem, \eqref{6.5} is bounded from above by 
\begin{align*}
&\varlimsup_{N\rightarrow\infty}\bb Q^{\beta, N}_{\mu_N}\Big[\pi.:\sup_{0\leq t\leq T}\Big|\<\pi_t, H\>- \<\pi_0, H \>-\int_0^t\<\pi_s ,\Delta H\> \,ds\Big.\\
&-\int_0^t\int_{\p\Lambda}(\pi_s\ast\iota_\eps)(u^+)\sum_{j=1}^{d}\p_{u_j} H(u^+)\<\vec{\zeta}(u),e_j\>\,dS(u)ds\Big.\\
&+\int_0^t\int_{\p\Lambda}(\pi_s\ast\iota_\eps)(u^-)\sum_{j=1}^{d}\p_{u_j} H(u^-)\<\vec{\zeta}(u),e_j\>\,dS(u)ds\Big.\\
&+\!\!\int_0^t\!\!\int_{\p\Lambda}\hspace{-0.3cm}\alpha ((\pi_s\ast\iota_\eps)(u^-)\!-\!(\pi_s\ast\iota_\eps)(u^+))\\
&\hspace{2cm}\times(H(u^+)\!-\!H(u^-))\sum_{j=1}^d|\<\vec{\zeta}(u), e_j\>|\,dS(u)ds\Big|>\delta/2\Big]\!,
\end{align*}
since the supremum above is a continuous function in the Skorohod metric. 
Now, recalling that $\bb Q^{\beta, N}_{\mu_N}$ is the probability induced by $\bb P^{\beta}_{\mu_N}$ via the empirical measure, adding and subtracting $\<\pi_s^N, N^2 \bb L_N H\>$, adding and subtracting $\frac{1}{N^{d-1}}\sum_{x\in\Gamma_N}N\bb L_N H(\pfrac{x}{N})\eta_s^{\eps N}(x)$, applying \eqref{remark} and the Lemma~\ref{lemma65},  we can bound the previous expression by the sum of
\begin{equation}\label{eq614a}
\varlimsup_{N\rightarrow\infty}\bb P^{\beta}_{\mu_N}\Big[\sup_{0\leq t\leq T}\Big|\<\pi^N_t, H\> -\<\pi^N_0, H \> - \int_0^t \<\pi^N_s ,N^2\bb L_N H \> \,ds\Big| >\delta/8\Big]\,,
\end{equation}
\begin{equation}\label{eq614aa}
\varlimsup_{N\rightarrow\infty}\bb P^{\beta}_{\mu_N}\Big[\sup_{0\leq t\leq T}\Big|\int_0^t \sum_{x\notin\Gamma_N}\Big(N^2\bb L_N H\big(\pfrac{x}{N}\big)-\Delta H\big(\pfrac{x}{N}\big)\Big)\eta_s(x) \,ds\Big| >\delta/8\Big]\,,
\end{equation}
\begin{equation} \label{eqmaA}
\varlimsup_{N\rightarrow\infty}\bb P^{\beta}_{\mu_N}\Big[\sup_{0\leq t \leq T} \Big| \frac{1}{N^{d-1}}\int_0^t \sum_{x\in\Gamma_N}N\bb L_N H\big(\pfrac{x}{N}\big)(\eta_s(x)-\eta^{\varepsilon N}_s(x))\,ds\Big|>\delta/8\Big]
\end{equation} 
and
\begin{equation}\label{eq6.15a}
\begin{split}
&\varlimsup_{N\rightarrow\infty}\bb P^{\beta}_{\mu_N}\Big[\sup_{0\leq t\leq T}\Big|\int_0^t \sum_{x\in\Gamma_N}N\bb L_N H\Big(\frac{x}{N}\Big)\eta_s^{\eps N}(x) \,ds\\
&+\sum_{j=1}^{d}\int_0^t\frac{1}{N^{d-1}}\hspace{-0.2cm}\sum_{x\in \GmenosjR}\hspace{-0.2cm}\eta_s^{\varepsilon N}(x)\p_{u_j}H(\uu^-)\,ds\\
&-\sum_{j=1}^{d}\int_0^t\frac{1}{N^{d-1}}\hspace{-0.2cm}\sum_{x\in \GmenosjL}\hspace{-0.2cm}\eta_s^{\varepsilon N}(x)\p_{u_j}H(\uu^-)\,ds\\
&-\sum_{j=1}^{d}\int_0^t\vspace{-0.2cm}\frac{1}{N^{d-1}}\hspace{-0.2cm}\sum_{x\in \GmaisjR}\hspace{-0.2cm}\eta_s^{\varepsilon N}(x)\p_{u_j}H(\uu^+)\,ds\\
&+\sum_{j=1}^{d}\int_0^t\vspace{-0.2cm}\frac{1}{N^{d-1}}\hspace{-0.2cm}\sum_{x\in \GmaisjL}\hspace{-0.2cm}\eta_s^{\varepsilon N}(x)\p_{u_j}H(\uu^+)\,ds\\
& +\sum_{j=1}^d\int_0^t\!\!\frac{1}{N^{d-1}}\hspace{-0.2cm}\sum_{x\in\Gmenosj}\hspace{-0.2cm}\alpha\,\eta_s^{\varepsilon N}(x)(H(\uu^+)-H(\uu^-))\,ds \\
& -\sum_{j=1}^d\int_0^t\!\!\frac{1}{N^{d-1}}\hspace{-0.2cm}\sum_{x\in\Gmaisj}\hspace{-0.2cm}\alpha\,\eta_s^{\varepsilon N}(x)(H(\uu^+)-H(\uu^-))\,ds+ \text{err}(N) \Big|>\delta/8\Big]\,,
\end{split}
\end{equation}
where $\text{err}(N)$ is a error that goes in modulus  to zero as $N\to\infty$. Proposition~\ref{bm} tells us that  \eqref{eq614a} is null.
The approximation of the continuous Laplacian by the discrete Laplacian
assures that \eqref{eq614aa} is null. Since $N\bb L_N H$ is a sequence of uniformly bounded functions, Lemma \ref{replacement2} allows we conclude that  \eqref{eqmaA} vanishes as $\eps\searrow 0$. Finally, provided by formulas \eqref{sumnj2} and \eqref{sumnj3} and recalling the decomposition $\Gamma_N=\Gmais\cup \Gmenos$, we can see that, except for the error term, all terms inside the supremum in \eqref{eq6.15a} cancel. This concludes the proof.
\end{proof}

\subsection{Characterization of limit points for \texorpdfstring{$\beta\in(1,\infty]$}{beta>1}.} \label{6.3}
\begin{proposition}
Let $H\in C^2(\bb T^d\backslash \partial\Lambda)$. For all $\delta>0$,
\begin{equation} \label{6.11}
\begin{split}
\bb Q^\beta_*\Big[\pi.: \sup_{0\leq t\leq T}\Big|&\< \rho_t, H\> - \< \rho_0 , H\> - \int_0^t \< \rho_s , \Delta H \> \, ds\Big.\\
&-\int_0^t\int_{\p\Lambda}\rho_s(u^+)\sum_{j=1}^d\p_{u_j} H(u^+)\<\vec{\zeta},e_j\>\,dS(u)ds\\
&+\int_0^t\int_{\p\Lambda}\rho_s(u^-)\sum_{j=1}^d\p_{u_j} H(u^-)\<\vec{\zeta},e_j\>\,dS(u)ds\Big| >\delta\,\Big]\;=\;0.
\end{split}
\end{equation}
\end{proposition}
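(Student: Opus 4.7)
The argument is a direct adaptation of the proof of Proposition~\ref{prop62one}, with the crucial simplification that the boundary terms containing the factor $\alpha$ now vanish in the $N\to\infty$ limit because $\xi^N_{x,x\pm e_j}=\alpha/N^\beta$ with $\beta>1$. The plan is as follows.

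\medskip

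\textbf{Step 1 (Regularization and Portmanteau).} Fix $\eps>0$. As in the proof of Proposition~\ref{prop62one}, introduce the approximation of identity $\iota_\eps$ from \eqref{approxidentity} and add and subtract $(\rho_s\ast\iota_\eps)(u^\pm)$ inside the surface integrals in \eqref{6.11}. The difference $(\rho_s\ast\iota_\eps)(u^\pm)-\rho_s(u^\pm)$ goes to zero as $\eps\downarrow 0$ by Proposition~\ref{Prop5.7} (since $\rho\in\sobH$ has a well-defined trace on $\p\Lambda$), so it suffices to handle the regularized event. The latter event is a continuous function on $D([0,T],\mc M)$ in the Skorohod topology, and we may apply Portmanteau's theorem to pass to $\bb Q^{\beta,N}_{\mu_N}$, i.e., to the empirical measure $\pi^N$.

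\medskip

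\textbf{Step 2 (Dynkin and bulk terms).} Recall the martingale $M_t^N(H)$ from \eqref{M}. By Lemma~\ref{bm}(b), $M_t^N(H)\to 0$ uniformly in probability, so we may work with $\<\pi^N_t,H\>-\<\pi^N_0,H\>-\int_0^t\<\pi^N_s,N^2\bb L_N H\>ds$. Split the generator action as $\sum_{x\notin \Gamma_N}+\sum_{x\in \Gamma_N}$. The ``bulk'' sum over $x\notin \Gamma_N$ uses only jump rates equal to one and, by a Taylor expansion, converges to $\int_0^t\<\pi^N_s,\Delta H\>ds$ up to an error $\mc O(1/N)$. The ``boundary'' sum over $x\in\Gamma_N$ requires a careful analysis.

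\medskip

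\textbf{Step 3 (Replacement and expansion on $\Gamma_N$).} Apply Lemma~\ref{replacement2} to replace $\eta_s(x)$ by $\eta^{\eps N}_s(x)$ for $x\in\Gamma_N$, at a cost that vanishes as $N\to\infty$ and then $\eps\downarrow 0$. Then write
$\frac{1}{N^{d-1}}\sum_{x\in\Gamma_N}N\bb L_N H(\tfrac{x}{N})\eta^{\eps N}_s(x)$ using the decomposition $\Gamma_N=\Gmenos\cup\Gmais$ and the further splitting $\Gamma_{N,\pm}^j=\Gamma_{N,\pm}^{j,\text{\rm left}}\cup\Gamma_{N,\pm}^{j,\text{\rm right}}$ introduced before~\eqref{sumnj2}. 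For $x\in \GmenosjR$ one has $\xi^N_{x,x+e_j}=\alpha/N^\beta$ and $\xi^N_{x,x-e_j}=1$, so
\begin{equation*}
N\bb L^j_N H\bigl(\tfrac{x}{N}\bigr)\;=\;\alpha N^{1-\beta}\bigl(H(\uu^+)-H(\uu^-)\bigr)\;-\;\p_{u_j}H(\uu^-)\;+\;\mc O(N^{-1}),
\end{equation*}
and symmetrically on $\GmenosjL,\GmaisjL,\GmaisjR$. \emph{This is the point where the regime $\beta>1$ differs from $\beta=1$}: the prefactor $\alpha N^{1-\beta}\to 0$, so the terms containing $\alpha$ contribute a boundary sum of order $N^{1-\beta}$ times $\mc O(1)$, which vanishes uniformly in time. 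What remains are only the derivative terms $\pm \p_{u_j}H(\uu^\pm)$.

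\medskip

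\textbf{Step 4 (Convergence of boundary sums to surface integrals).} After the cancellations in Step~3, we are left with sums of the form $\frac{1}{N^{d-1}}\sum_{x\in\Gamma_{N,\pm}^{j,L/R}}\p_{u_j}H(\uu^\pm)\eta^{\eps N}_s(x)$. Writing $\eta^{\eps N}_s(x)=(\pi^N_s\ast\iota_\eps)(x/N)+\mc O((\eps N)^{1-d})$ via \eqref{remark} and invoking Lemma~\ref{lemma65} (formulas \eqref{eq617b} and \eqref{eq617bAA}), these sums converge as $N\to\infty$ to the corresponding surface integrals
\begin{equation*}
\int_{\p\Lambda}(\pi_s\ast\iota_\eps)(u^\pm)\,\p_{u_j}H(u^\pm)\,\<\vec{\zeta}(u),e_j\>\,dS(u),
\end{equation*}
with the appropriate signs from the $\mathrm R/\mathrm L$ decomposition. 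Summing over $j$ and over $\pm$ and combining with the bulk term from Step~2 yields precisely the integral identity inside the event in \eqref{6.11} (with $\rho_s$ replaced by $\rho_s\ast\iota_\eps$). Letting $\eps\downarrow 0$ and invoking Step~1 completes the proof.

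\medskip

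\textbf{Main obstacle.} The only substantive point beyond the $\beta=1$ proof is ensuring that the bounds obtained for the vanishing $\alpha N^{1-\beta}$ term are uniform over $t\in[0,T]$; this is immediate since the bound is deterministic and the sum is over $\mc O(N^{d-1})$ sites, so the contribution is $\mc O(N^{1-\beta})\cdot T\cdot\|H\|_\infty$. The rest is a careful bookkeeping of the decompositions $\Gamma_{N,\pm}^{j,L/R}$, following verbatim the calculations leading from \eqref{sumnj2}--\eqref{sumnj3} to \eqref{eq6.15a}.
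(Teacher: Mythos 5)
Your proposal is correct and follows essentially the same route as the paper: the paper's own proof simply writes down the analogues of \eqref{sumnj2} and \eqref{sumnj3} with the prefactor $\alpha N^{1-\beta}$, observes that these terms vanish as $N\to\infty$ since $\beta>1$ (and are absent for $\beta=\infty$), and then repeats the steps of the proof of Proposition~\ref{prop62one} for the remaining derivative terms. Your Steps 1--4 and the uniformity remark reproduce exactly this argument.
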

\begin{proof}
The proof of this proposition is similar, in fact, simpler than the one of Proposition~\ref{prop62one}. In this case,
\begin{equation}\label{sumnj2neumann}
\begin{split}
&\frac{1}{N^{d-1}}\sum_{x\in\Gmenos}N\bb L_N H\big(\pfrac{x}{N}\big)\eta^{\varepsilon N}_s(x)\\
 & \;=\;
\frac{1}{N^{d-1}}\sum_{j=1}^d \Bigg\{\sum_{x\in\GmenosjR}\Big[\alpha N^{1-\beta} \big(H(\uu^+)-H(\uu^-)\big)-\p_{u_j}H(\uu^-)\Big]\eta^{\varepsilon N}_s(x)\\
&\hspace{2cm}+\sum_{x\in\GmenosjL}\Big[\p_{u_j}H(\uu^-)+\alpha N^{1-\beta}\big(H(\uu^+)-H(\uu^-)\big)\Big]\eta^{\varepsilon N}_s(x)\Bigg\}\,.
\end{split}
\end{equation}
and 
\begin{equation}\label{sumnj3neumann}
\begin{split}
&\frac{1}{N^{d-1}}\sum_{x\in\Gmais}N\bb L_N H\big(\pfrac{x}{N}\big)\eta^{\varepsilon N}_s(x)\\
 & \;=\;
\frac{1}{N^{d-1}}\sum_{j=1}^d \Bigg\{\sum_{x\in\GmaisjR}\Big[\p_{u_j}H(\uu^+)+\alpha N^{1-\beta} \big(H(\uu^-)-H(\uu^+)\big)\Big]\eta^{\varepsilon N}_s(x)\\
&\hspace{2cm}+\sum_{x\in\GmaisjL}\Big[\alpha N^{1-\beta}\big(H(\uu^-)-H(\uu^+)\big)-\p_{u_j}H(\uu^+)\Big]\eta^{\varepsilon N}_s(x)\Bigg\}\,.
\end{split}
\end{equation}
 Since $\beta\in (1,\infty]$, we conclude that all terms above  involving $\alpha$ disappear in the limit as $N\to \infty$. Noting that there are no surface integrals in \eqref{6.11} involving $\alpha$, it is a simple game to repeat the steps in the proof of Proposition~\ref{prop62one} to finally conclude \eqref{6.11}.
\end{proof}

\section{Uniqueness of weak solutions} \label{s7}
The hydrodynamic equation \eqref{edpheat} is the classical heat equation, which does not need any consideration about uniqueness of weak solutions. Thus, we only need  to guarantee that weak solutions of \eqref{edp12} and \eqref{edpbc} are unique. 

Let us trace the strategy for the proof of uniqueness, which works for both \eqref{edp12} and \eqref{edpbc}. 
Considering in each case $\beta=1$ or $\beta\in(1,\infty]$  a suitable set of test functions, we can annul all surface integrals. Being more precise, consider the following definitions:
\begin{definition}\label{operator1}  Let $\Dr\subset L^2(\bb T^d)$ be the set of functions  $H:\bb T^d\to \bb R$ such that $H(u)=h_1(u){\bf 1}_\Lambda(u)+h_2(u){\bf 1}_\Lambda^\complement(u)$, where
\begin{enumerate}[(i)]
\item $h_i\in C^2(\bb T^d)$ for $i=\{1,2\}$.
\item $\<\nabla h_1(u),\vec{\zeta}(u)\>=\<\nabla h_2(u),\vec{\zeta}(u)\>=\big(h_2(u)-h_1(u)\big)\dd\sum_{j=1}^d|\<\vec{\zeta}(u), e_j\>|$ ,  $\forall u\in\p\Lambda$.

\end{enumerate}
Define  the operator  $\Lr: \Dr\rightarrow L^{2}(\bb T^d)$ by
\begin{equation*}
\Lr H(u)\,=\;\left\{\begin{array}{cl}
\Delta h_1(u), \,\, &  \mbox{if}\,\,\,\,u\in\Lambda\,,\smallskip\\
\Delta h_2(u), \,\, &  \mbox{if}\,\,\,\,u\in\Lambda^\complement \,.
\end{array}
\right.
\end{equation*}
\end{definition}

\begin{definition}\label{operator}  Let $\Dn\subset L ^2(\bb T^d)$ be the set of functions  $H:\bb T^d\to \bb R$ such that $H(u)=h_1(u){\bf 1}_\Lambda(u)+h_2(u){\bf 1}_\Lambda^\complement(u)$, 
where:
\begin{enumerate}[(i)]
\item $h_i\in C^2(\bb T^d)$ for $i=\{1,2\}$.\smallskip
\item $\<\nabla h_1(u),\vec{\zeta}(u)\>=\<\nabla h_2(u),\vec{\zeta}(u)\>=0$ , $\forall u\in\p\Lambda$.\smallskip
\end{enumerate}
Define  the operator   $\Ln: \Dn\rightarrow L^{2}(\bb T^d)$ by
\begin{equation*}
\Ln H(u)\;=\;\left\{\begin{array}{cl}
\Delta h_1(u) \,\, &  \mbox{if}\,\,\,\,u\in\Lambda\,,\smallskip\\
\Delta h_2(u) \,\, &  \mbox{if}\,\,\,\,u\in\Lambda^\complement \,.
\end{array}
\right.
\end{equation*}
\end{definition}
It is straightforward to check that, if $\rho$ is a weak solution of \eqref{edp12}, then
\begin{equation}\label{71}
\begin{split}
&\< \rho_t, H\> - \< \rho_0 , H\> - \int_0^t\! \< \rho_s , \Lr H \> \, ds\;=\; 0\,,\quad \forall H\in \Dr\,,\,\forall t\in[0,T]\,,
\end{split}
\end{equation}
while, if $\rho$ is a weak solution of \eqref{edpbc}, then
\begin{equation}\label{72}
\begin{split}
&\< \rho_t, H\> - \< \rho_0 , H\> - \int_0^t\! \< \rho_s , \Ln H \> \, ds\;=\; 0\,,\quad \forall H\in \Dn\,, \,\forall t\in[0,T]\,.
\end{split}
\end{equation}
In both cases, if an orthonormal basis of $L^2(\bb T^d)$ composed of eigenfunctions for the corresponding operator (associated to nonpositive eigenvalues) is available, this would easily lead to the proof of uniqueness, as we shall see later.   However, this is not the case. So, to overcome this situation  we extend the corresponding operator via a \textit{Friedrichs extension} (see \cite{z} on the subject) to achieve the desired orthonormal basis.

Let us briefly explain the notion of Friedrichs extension. Let $X$ be a Hilbert space and denote by $\<\cdot,\cdot\>$ and $\Vert\cdot\Vert$ its inner product and norm, respectively. Consider a linear, strongly monotone and symmetric operator $\mathcal{A}:\mf D\subset X\rightarrow X$, where by \textit{strongly monotone} we mean that there exists  $c>0$ such that 
\begin{equation*}
\< \mathcal{A} H, H\>\;\geq\; c \Vert H\Vert^2\,,\quad \forall \,H\in\mf D\,.
\end{equation*}
Denote by $\< \cdot, \cdot\>_{\mc E(\mathcal{A})}$ the so-called \textit{energetic} inner product on $\mf D$ associated to $\mc A$, which is defined by
\begin{equation*}
\< F,G\>_{\mc E(\mathcal{A})} \; :=\;  \< F,\,\mathcal{A}  G\>\,.
\end{equation*}
Let $\HF$ be the set of all functions $F$ in $X$ for which there exists a sequence $\{F_n : n\ge 1\}$ in $\mf D$ such that $F_n$ converges to $F$ in $X$ and $F_n$ is Cauchy for the inner product $\< \cdot, \cdot \>_{\mc E(\mc A)}$. A sequence $\{F_n : n\ge 1\}$ with these properties will be called an \textit{admissible sequence} for $F$. For $F$, $G$ in $\HF $, let
\begin{equation}\label{innerproduct}
\< F,G\>_{\text{\rm Fried}}\; :=\; \lim_{n\to\infty} \< F_n,G_n\>_{\mc E(\mathcal{A})}\,,
\end{equation}
where $\{F_n : n\ge 1\}$, $\{G_n : n\ge 1\}$ are admissible sequences for $F$ and $G$, respectively. 
 By \cite[Proposition 5.3.3]{z}, the limit exists and does not depend on the admissible sequence chosen and, moreover, the space $\HF$ endowed with the scalar product $\< \cdot, \cdot \>_{\text{\rm Fried}}$  is a real Hilbert space, usually called the \textit{energetic space} associated to $\mc A$. 
 
 The Friedrichs extension $\mathcal{A}_{\text{\rm Fried}}:\DF\to X$ of the operator $\mathcal{A}$ is then defined as follows.  Let $\DF$ be the set of vectors in $F\in\HF$ for which there exists a vector $f\in X$ such that
 \begin{equation*}
\<  F,G\>_{\text{\rm Fried}} \;=\; \<  f,G\>\,, \quad \forall G\in \HF\,. 
\end{equation*}
and let $\AF F=f$. See the excellent book \cite{z} for why this operator  $\mathcal{A}_{\text{\rm Fried}}:\DF\to X$ is indeed an extension of $\mc A:\mf D\to X$ and  more details on the construction.  The main result about Friedrichs extensions and eigenfunctions we cite here is the next one. 
\begin{theorem}[\cite{z}, Theorem 5.5C]\label{proper} Let $\mathcal{A}: \mf D\subseteq X\rightarrow X$ be a linear, symmetric and strongly monotone operator and let $\mathcal{A}_{\text{\rm Fried}}: \mf D_{\text{\rm Fried}}\subseteq X\rightarrow X$ be its Friedrichs extension.  Assume additionally that the embedding $\HF \hookrightarrow X$ is compact. Then,
\begin{itemize}
\item[(a)] The eigenvalues of $-\mathcal{A}_{\text{\rm Fried}}$ form a countable set $0<c\leq  \mu_1\le\mu_2\le \cdots$ with $\lim_{n\to\infty} \mu_n=\infty$, and all these eigenvalues have finite multiplicity.
\item[(b)] There exists a complete orthonormal basis of $X$ composed of eigenvectors of $\mc A_{\text{\rm Fried}}$.
\end{itemize}
\end{theorem}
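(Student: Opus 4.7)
The plan is to reduce the statement to the classical spectral theorem for compact self\nobreakdash-adjoint operators, applied not to $\mathcal{A}_{\text{\rm Fried}}$ itself (which is unbounded) but to its inverse. The first step is to promote the strong monotonicity from $\mc A$ to $\mathcal{A}_{\text{\rm Fried}}$: by construction of the energetic inner product, for every admissible sequence $\{F_n\}\subset \mf D$ for $F\in\HF$ we have $\langle F_n,F_n\rangle_{\mc E(\mc A)}=\langle \mc A F_n,F_n\rangle\ge c\|F_n\|^2$, and passing to the limit yields $\langle F,F\rangle_{\text{\rm Fried}}\ge c\|F\|^2$ for all $F\in\HF$. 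In particular, for $F\in\DF$ one obtains $\langle\mathcal{A}_{\text{\rm Fried}} F,F\rangle=\langle F,F\rangle_{\text{\rm Fried}}\ge c\|F\|^2$.

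The second step is to construct the inverse. Given $f\in X$, the functional $G\mapsto \langle f,G\rangle$ is continuous on $(\HF,\langle\cdot,\cdot\rangle_{\text{\rm Fried}})$ because $\|G\|\le c^{-1/2}\|G\|_{\text{\rm Fried}}$, so by the Riesz representation theorem applied in the Hilbert space $\HF$ there exists a unique $F\in\HF$ with $\langle F,G\rangle_{\text{\rm Fried}}=\langle f,G\rangle$ for all $G\in\HF$. By the very definition of $\DF$ this means $F\in\DF$ and $\mathcal{A}_{\text{\rm Fried}} F=f$. Hence $\mathcal{A}_{\text{\rm Fried}}:\DF\to X$ is a bijection and its inverse $T:=\mathcal{A}_{\text{\rm Fried}}^{-1}:X\to X$ is well defined and bounded (with $\|T\|\le 1/c$). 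Symmetry of $\mc A$ passes to $\mathcal{A}_{\text{\rm Fried}}$ via the bilinear identity above, from which $T$ is easily seen to be self\nobreakdash-adjoint on $X$.

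The third step is compactness. By definition $T$ factors as $X\xrightarrow{\;T_0\;}\HF\xrightarrow{\;j\;}X$, where $T_0$ sends $f$ to the Riesz representative constructed above and is bounded from $X$ into $(\HF,\|\cdot\|_{\text{\rm Fried}})$ (with norm $\le c^{-1/2}$), and $j$ is the embedding, which is compact by hypothesis. Composition of a bounded with a compact operator gives that $T:X\to X$ is compact.

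The final step invokes the spectral theorem for compact self\nobreakdash-adjoint operators on the real Hilbert space $X$: there is an orthonormal basis $\{e_n\}_{n\ge 1}$ of $X$ (ignoring the kernel, which is trivial here since $T$ is injective as $\mathcal{A}_{\text{\rm Fried}}$ is a bijection) consisting of eigenvectors $T e_n=\lambda_n e_n$ with $\lambda_n\in\bb R$, $\lambda_n\to 0$, and each eigenvalue of finite multiplicity. Setting $\mu_n:=1/\lambda_n$ gives $\mathcal{A}_{\text{\rm Fried}} e_n=\mu_n e_n$; strong monotonicity forces $\mu_n=\langle\mathcal{A}_{\text{\rm Fried}} e_n,e_n\rangle\ge c>0$, and $\mu_n\to\infty$ since $\lambda_n\to 0$. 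Reindexing in nondecreasing order yields the claimed sequence $0<c\le\mu_1\le\mu_2\le\cdots$, proving both (a) and (b). The main technical point is the Riesz argument producing $T$ and the verification that its range lies in $\DF$; once this is in place, everything else is standard Hilbert\nobreakdash-space spectral theory.
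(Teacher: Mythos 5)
Your proof is correct and complete: the paper does not prove this statement at all (it is imported verbatim from Zeidler, Theorem 5.5C), and your argument --- transfer the coercivity bound to the energetic space, invert $\mathcal{A}_{\text{\rm Fried}}$ via Riesz representation in $\HF$, factor the inverse through the compact embedding $\HF\hookrightarrow X$, and apply the Hilbert--Schmidt spectral theorem --- is precisely the standard proof given in that reference. One small remark: as stated in the paper, part (a) speaks of the eigenvalues of $-\mathcal{A}_{\text{\rm Fried}}$, which under strong monotonicity would be $\leq -c<0$; this is a sign slip in the paper's transcription, and you correctly prove the bound $\mu_n\geq c$ for the eigenvalues of $\mathcal{A}_{\text{\rm Fried}}$ itself, which is the form actually used later in Proposition~\ref{prop72}.
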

Denote by $\bb I$ the identity operator. If $\mf L: \mf D\subseteq X\rightarrow X$ is a symmetric nonpositive  operator, then  
$\bb I -\mf L:\mf D\rightarrow X$ is  symmetric and strongly monotone with $c=1$. In fact,
\begin{equation*}
\<(\bb I-\mf L) H, H\>\;=\; \Vert H\Vert^2 + \<-\mf L H, H\>\;\geq\; \Vert H\Vert^2\,,\quad \forall\,H\in \mf D\,.
\end{equation*}
Therefore, under the hypothesis that $\mf L: \mf D\subseteq X\rightarrow X$ is a symmetric and nonpositive linear operator, we may consider the  Friedrichs extension of ${(\bb I -\mf L)}$.   
\begin{proposition}\label{prop72}
Let $\mf L: \mf D\subseteq X\rightarrow X$ be a symmetric nonpositive  operator.  Denote by $(\bb I -\mf L)_{\text{\rm Fried}}: \mf D_{\text{\rm Fried}}\rightarrow X$  the Friedrichs extension of $(\bb I -\mf L):\mf D\to X$ and by $\HF$ the corresponding energetic space. Assume that the embedding $\HF \hookrightarrow X$ is compact. 
Then, there exists at most one measurable  function $\rho:[0,T] \to X$ such that
\begin{equation}\label{sup}
\sup_{t\in[0,T]}\Vert \rho_t\Vert\;<\;\infty\
\end{equation}
and
\begin{equation*}
\begin{split}
\< \rho_t, H\> - \< \rho_0 , H\> - \int_0^t \< \rho_s , \mf L H \> \, ds\;=\; 0\,,\quad \forall H\in \mf D\,, \,\forall t\in[0,T]\,.
\end{split}
\end{equation*}
where  $\rho_0$ is a fixed element of $X$.
\end{proposition}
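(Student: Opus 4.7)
The plan is the standard Gronwall / spectral-expansion argument, with the Friedrichs extension providing the spectral basis. Suppose $\rho^1$ and $\rho^2$ are two bounded measurable solutions with the same initial datum $\rho_0$, and set $\bar\rho_t = \rho^1_t - \rho^2_t$. Then $\bar\rho$ is bounded in $X$ and satisfies the homogeneous integral equation
$$\<\bar\rho_t, H\> \;=\; \int_0^t \<\bar\rho_s, \mf L H\>\, ds, \qquad \forall\, H \in \mf D,\ t \in [0, T]. \qquad (\star)$$
The goal is to prove $\bar\rho_t = 0$ in $X$ for every $t$.

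First I would observe that, since $\mf L$ is symmetric and nonpositive, $\bb I - \mf L$ is symmetric and strongly monotone on $\mf D$ (with constant $c = 1$), so its Friedrichs extension is self-adjoint on $\mf D_{\text{Fried}}$. The assumed compactness of $\HF \hookrightarrow X$ and Theorem~\ref{proper} applied to $(\bb I - \mf L)_{\text{Fried}}$ produce a complete orthonormal basis $\{h_n\}_{n \ge 1}$ of $X$ satisfying $(\bb I - \mf L)_{\text{Fried}} h_n = \mu_n h_n$ with $1 \le \mu_1 \le \mu_2 \le \cdots \to \infty$; equivalently $-\mf L_{\text{Fried}} h_n = \lambda_n h_n$ with $\lambda_n := \mu_n - 1 \ge 0$.

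Next, set $a_n(t) := \<\bar\rho_t, h_n\>$; by Parseval it suffices to show $a_n \equiv 0$ for each $n$. Since $h_n \in \mf D_{\text{Fried}}$ may not lie in $\mf D$, fix an admissible sequence $h_n^{(k)} \in \mf D$ with $h_n^{(k)} \to h_n$ in $X$ and Cauchy in the energetic inner product $\<\cdot, \cdot\>_{\mc E(\bb I - \mf L)}$. Testing $(\star)$ against $H = h_n^{(k)}$ and rewriting $\mf L H = H - (\bb I - \mf L)H$ gives
$$\<\bar\rho_t, h_n^{(k)}\> \;=\; \int_0^t \<\bar\rho_s, h_n^{(k)}\>\, ds \;-\; \int_0^t \<\bar\rho_s, (\bb I - \mf L) h_n^{(k)}\>\, ds.$$
As $k \to \infty$, the left-hand side tends to $a_n(t)$ and the first integral to $\int_0^t a_n(s)\, ds$ by $X$-convergence of $h_n^{(k)}$ and the uniform bound on $\|\bar\rho_s\|$. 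After the more delicate limit described below is handled, one arrives at the scalar Volterra equation
$$a_n(t) \;=\; -\lambda_n \int_0^t a_n(s)\, ds, \qquad a_n(0) = 0,$$
whose unique bounded measurable solution is $a_n \equiv 0$ by Gronwall. Parseval then yields $\|\bar\rho_t\|^2 = \sum_n a_n(t)^2 = 0$, finishing the proof.

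The main obstacle is the passage to the limit for $\int_0^t \<\bar\rho_s, (\bb I - \mf L) h_n^{(k)}\>\, ds$, which one wants to identify with $\mu_n \int_0^t a_n(s)\, ds$. The admissible sequence converges in the energetic norm but not necessarily in the graph norm of $\bb I - \mf L$, so $(\bb I - \mf L) h_n^{(k)}$ need not converge in $X$. The idea is to pair with $\bar\rho_s$ through the spectral decomposition: writing $h_n^{(k)} = \sum_m c_{k,m} h_m$ and using self-adjointness of the Friedrichs extension gives
$$\<\bar\rho_s, (\bb I - \mf L) h_n^{(k)}\> \;=\; \sum_{m} \mu_m\, c_{k,m}\, a_m(s),$$
while the Cauchy property of the admissible sequence translates into $\sum_m \mu_m (c_{k,m} - \delta_{nm})^2 \to 0$. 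A weighted Cauchy--Schwarz estimate, combined with the uniform $X$-bound on $\bar\rho_s$ and Fubini in $(s,m)$, closes the convergence; in applications the required control of $\sum_m \mu_m a_m(s)^2$ (i.e.\ $\bar\rho_s \in \HF$ a.e.) is guaranteed by the energy estimate of Proposition~\ref{Prop5.7} and can also be recovered directly from $(\star)$ by an approximation argument based on the resolvent of $(\bb I - \mf L)_{\text{Fried}}$.
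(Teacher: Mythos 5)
Your proposal follows essentially the same route as the paper: pass to the difference $\bar\rho$, diagonalize via the eigenbasis $\{\Psi_j\}$ of the Friedrichs extension of $\bb I-\mf L$ supplied by Theorem~\ref{proper}, and kill each Fourier mode separately. The two genuine differences are in the bookkeeping. First, where you run Gronwall on each coefficient $a_n(t)$ individually, the paper instead introduces the weighted series $R(t)=\sum_j j^{-2}(1-\mu_j)^{-1}\<\rho_t,\Psi_j\>^2$ and shows $R\ge 0$, $R(0)=0$, $dR/dt\le 0$; the weights are there precisely to justify differentiating under the sum, and your per-mode argument sidesteps that device entirely. Second, and more substantively, the step you single out as the main obstacle --- extending the test-function class from $\mf D$ to the eigenfunctions, which live only in $\DF$ --- is dispatched by the paper in a single sentence (``Since $\DF\subseteq\HF$, the last equation can be extended\dots''), with no admissible-sequence argument at all. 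You are right that this is the delicate point: the energetic convergence of $h_n^{(k)}$ does not give convergence of $(\bb I-\mf L)h_n^{(k)}$ in $X$, so identifying the limit of $\int_0^t\<\bar\rho_s,(\bb I-\mf L)h_n^{(k)}\>\,ds$ requires pairing through the energetic inner product, which in turn needs $\bar\rho_s\in\HF$ for a.e.\ $s$. That regularity is not among the hypotheses of the abstract Proposition (only $\sup_t\Vert\rho_t\Vert<\infty$ is assumed), so your appeal to Proposition~\ref{Prop5.7} imports information from the application rather than from the statement being proved; but the paper's own one-line extension suffers from exactly the same omission, so on this point your write-up is the more honest of the two. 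If you want to close the step purely abstractly, the cleanest fix is to integrate in time first, observe that $P_t=\int_0^t\bar\rho_s\,ds$ satisfies $\<P_t,(\bb I-\mf L)H\>=\<P_t-\bar\rho_t,H\>$ for all $H\in\mf D$, and then argue that $P_t$ lies in the domain of the relevant adjoint --- though one must then confront the fact that $\mf D$ need not be a core for the Friedrichs extension, which is the real content hidden in this step.
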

\begin{proof}
Consider $\rho^1, \rho^2$ two solutions of above and write  $\rho=\rho^1-\rho^2$. By linearity, 
\begin{equation*}
\< \rho_t, H\>  - \int_0^t \< \rho_s , \mf L H \> \, ds\;=\; 0\,,\quad \forall H\in \mf D\,, \,\forall t\in[0,T]\,.
\end{equation*}
which is the same as
\begin{equation*}
\< \rho_t, H\>  + \int_0^t \< \rho_s , (\bb I-\mf L) H \> \, ds- \int_0^t \< \rho_s ,  H \> \, ds\;=\; 0\,,\quad \forall H\in \mf D\,, \,\forall t\in[0,T]\,.
\end{equation*}
Since $\DF\subseteq \HF$, the last equation  can be extended to
\begin{equation}\label{pth}
\< \rho_t, H\>  + \int_0^t \< \rho_s , (\bb I-\mf L)_{\text{\rm Fried}} H \> \, ds- \int_0^t \< \rho_s ,  H \> \, ds\;=\; 0\,,\quad \forall H\in \DF\,, \,\forall t\in[0,T]\,.
\end{equation}
By Theorem \ref{proper}, the Friedrichs extension $(\bb I-\mf L)_{\text{\rm Fried}} : \mf D_{\text{\rm Fried}}  \to X$ has eigenvalues $1\le \lambda_1 \le \lambda_2 \le \cdots$, all of them having finite multiplicity with $\lim_{n\to\infty}\lambda_n=\infty$,  and there exists a complete orthonormal basis $\{\Psi_j\}_{i\in \bb N}$ of $L^2(\bb T^d)$ composed of eigenfunctions. 
Denote $$\mf L_{\text{\rm Fried}}\;:=\;\bb I-(\bb I-\mf L)_{\text{\rm Fried}}\,.$$
Thus, $\{\Psi_j\}_{j\in \bb N}$ is also a set of eigenfunctions for the operator $\mf L_{\text{\rm Fried}}$  whose eigenvalues are given by  $\mu_j=1-\lambda_j\leq 0$. 
 Define 
\[R(t)\;=\;\sum_{j=1}^\infty\frac{1}{j^{2}(1-\mu_j)}\<\rho_t,\Psi_j\>^{2}\quad \text{ for } t\in [0,T]\,.\]   Since $\rho$ satisfy \eqref{pth}, we have that 
\begin{equation}\label{derivada}
\frac{d}{dt}\< \rho_t,\Psi_j\>^{2}\;=\;2\<\rho_t,\Psi_j\>\<\rho_t,\mf L_{\text{\rm Fried}}\Psi_j\>\;=\;2\mu_j\<\rho_t,\Psi_j\>^2\,.
\end{equation}
By \eqref{sup} and the Cauchy-Schwarz inequality, we have that 
\begin{equation*}
\sum_{j=1}^\infty\frac{2|\mu_j|}{j^{2}(1-\mu_j)}\<\rho_t,\Psi_j\>^{2}\;\leq\;\sum_{j=1}^\infty\frac{2|\mu_j|}{j^{2}(1-\mu_j)}\Big(\sup_{t\in [0,T]}\Vert \rho_t\Vert^{2}\Big)\;<\;\infty\,,
\end{equation*}
which together with \eqref{derivada} implies  that
\begin{equation*}
\frac{d}{dt}R(t)\;=\;\sum_{j=1}^\infty\frac{2\mu_j}{j^{2}(1-\mu_j)}\< \rho_t,\Psi_j\> ^{2}\;\leq\;0\,.
\end{equation*}
 Since $R(t)\geq0$,  $R(0)=0$, and $dR/dt\leq 0$, we conclude that $R(t)=0$ for all $t\in[0,T]$ and hence $\<\rho_t,\Psi_j\>^2=0$ for any $t\in[0,T]$. 
Due to $\{\Psi_j\}_{j\in \bb N}$ be a complete orthonormal basis of $X$, we deduce that $\rho\equiv0$, finishing the proof.
\end{proof}

In view of \eqref{71} and \eqref{72}, considering $X$ as the Hilbert space $L^2(\bb T^d)$ and applying the last proposition, to achieve the uniqueness of weak solutions of \eqref{edp12} and \eqref{edpbc} it is enough to assure that 
\begin{enumerate}
\item The operators $\bb I-\Lr:\Dr\subseteq L^2(\bb T^d)\to L^2(\bb T^d)$ and $\bb I-\Ln:\Dn\subseteq L^2(\bb T^d)\to L^2(\bb T^d)$ 
are symmetric nonpositive linear operators. \smallskip
\item Denoting by $\HrF$ and $\HnF$ their respective energetic spaces, the embeddings $\HrF \hookrightarrow L^2(\bb T^d)$ and $\HnF \hookrightarrow L^2(\bb T^d)$ are compact.
\end{enumerate}\medskip 

This is precisely what we are going to do in the next four propositions. Denote by $\cev{\zeta}(u)=-\vec{\zeta}(u)$ the normal exterior vector to the region $\Lambda^\complement$ at  $u\in \p\Lambda$. Recall that $\<\cdot,\cdot\>$ is used for both the inner products in $L^2(\bb T^d)$ and in $\bb R^d$.
\begin{proposition} \label{simetrico1}
The operator $-\Lr:\Dr\rightarrow L^2(\bb T^d)$ is symmetric and nonnegative.
\end{proposition}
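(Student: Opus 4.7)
The plan is to apply Green's identity on the two regions $\Lambda$ and $\Lambda^\complement$ separately and then to use the Robin-type boundary condition built into the definition of $\Dr$ to reorganize the boundary integrals into an expression that is manifestly symmetric in the two arguments. Write $F = f_1\mathbf{1}_\Lambda + f_2\mathbf{1}_{\Lambda^\complement}$ and $G = g_1\mathbf{1}_\Lambda + g_2\mathbf{1}_{\Lambda^\complement}$ with $f_i, g_i \in C^2(\bb T^d)$.

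First I would integrate by parts on each piece. On $\Lambda$, Green's first identity (with exterior normal $\vec{\zeta}$) gives
\begin{equation*}
-\int_\Lambda (\Delta f_1)\, g_1\, du \;=\; \int_\Lambda \<\nabla f_1,\nabla g_1\>\, du \;-\; \int_{\p\Lambda} g_1(u^-)\<\nabla f_1(u^-),\vec{\zeta}(u)\>\, dS(u)\,.
\end{equation*}
On $\Lambda^\complement$ the exterior normal is $\cev{\zeta}(u)=-\vec{\zeta}(u)$, so
\begin{equation*}
-\int_{\Lambda^\complement} (\Delta f_2)\, g_2\, du \;=\; \int_{\Lambda^\complement} \<\nabla f_2,\nabla g_2\>\, du \;+\; \int_{\p\Lambda} g_2(u^+)\<\nabla f_2(u^+),\vec{\zeta}(u)\>\, dS(u)\,.
\end{equation*}
Adding these two identities yields
\begin{equation*}
\<-\Lr F, G\>\;=\;\int_{\bb T^d\setminus\p\Lambda}\<\nabla F,\nabla G\>\, du \;+\; \int_{\p\Lambda}\Big(g_2(u^+)\<\nabla f_2(u^+),\vec{\zeta}\>-g_1(u^-)\<\nabla f_1(u^-),\vec{\zeta}\>\Big)\, dS\,.
\end{equation*}

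Next, I would invoke condition (ii) in Definition~\ref{operator1}. Writing $D(u):=\sum_{j=1}^d|\<\vec{\zeta}(u),e_j\>|$, the boundary condition on $F$ says that both $\<\nabla f_1(u^-),\vec{\zeta}(u)\>$ and $\<\nabla f_2(u^+),\vec{\zeta}(u)\>$ equal $(f_2(u^+)-f_1(u^-))D(u)$. Substituting, the boundary term collapses to
\begin{equation*}
\int_{\p\Lambda}\big(g_2(u^+)-g_1(u^-)\big)\big(f_2(u^+)-f_1(u^-)\big)D(u)\, dS(u)\,.
\end{equation*}
Therefore
\begin{equation*}
\<-\Lr F,G\>\;=\;\int_{\bb T^d\setminus\p\Lambda}\<\nabla F,\nabla G\>\, du \;+\; \int_{\p\Lambda}\big(f_2(u^+)-f_1(u^-)\big)\big(g_2(u^+)-g_1(u^-)\big)D(u)\, dS(u)\,,
\end{equation*}
which is visibly symmetric under the interchange $F\leftrightarrow G$, proving symmetry.

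Nonnegativity is immediate from the same formula upon setting $G=F$:
\begin{equation*}
\<-\Lr F,F\>\;=\;\int_{\bb T^d\setminus\p\Lambda}|\nabla F|^2\, du\;+\;\int_{\p\Lambda}\big(f_2(u^+)-f_1(u^-)\big)^{2}D(u)\, dS(u)\;\ge\; 0\,,
\end{equation*}
since $D(u)\ge 0$. The only subtle point in the whole argument is making sure the one-sided traces and normal derivatives are treated correctly on $\p\Lambda$ (in particular, that the exterior normal flips sign when passing from $\Lambda$ to $\Lambda^\complement$); the Robin condition is precisely what is needed so that, after this sign flip, the two boundary contributions combine into a symmetric quadratic form rather than a merely bilinear one. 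No further obstacle is anticipated.
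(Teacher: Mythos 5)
Your argument is correct. The ingredients are the same as in the paper's proof---Green's identities applied separately on $\Lambda$ and $\Lambda^\complement$, the sign flip of the exterior normal when passing to $\Lambda^\complement$, and condition (ii) of Definition~\ref{operator1} to handle the boundary terms---but you organize them differently: you derive in a single pass the symmetric representation
\begin{equation*}
\<-\Lr F,G\>\;=\;\int_{\bb T^d\setminus\p\Lambda}\<\nabla F,\nabla G\>\,du\;+\;\int_{\p\Lambda}\big(f_2(u^+)-f_1(u^-)\big)\big(g_2(u^+)-g_1(u^-)\big)\sum_{j=1}^d|\<\vec{\zeta}(u),e_j\>|\,dS(u)\,,
\end{equation*}
from which symmetry and nonnegativity are both immediate. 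The paper instead runs two separate computations: the third Green identity to show that the boundary contributions to $\<H,-\Lr G\>-\<-\Lr H,G\>$ cancel (symmetry), and then the second Green identity for the quadratic form (nonnegativity). Your single-identity route is slightly more economical and has the added benefit of exhibiting explicitly the energetic form $\<\cdot,\cdot\>_{\mc E(\bb I-\Lr)}$ that reappears in the proof of Proposition~\ref{embedding1}. Note also that your boundary term carries the correct constant: the factor $2$ appearing in the last display of the paper's nonnegativity computation is a harmless slip, since $\int_{\p\Lambda}\sum_{j}|\<\vec{\zeta},e_j\>|\big((h_2-h_1)h_2-(h_2-h_1)h_1\big)\,dS=\int_{\p\Lambda}\sum_{j}|\<\vec{\zeta},e_j\>|\,(h_2-h_1)^2\,dS$, in agreement with your formula.
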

\begin{proof}
Let $H, G\in \Dr$. We can write
$H=h_1{\bf 1}_\Lambda+h_2{\bf 1}_\Lambda^\complement$ and $ G=g_1{\bf 1}_\Lambda+g_2{\bf 1}_\Lambda^\complement$, where $h_1, h_2, g_1, g_2 \in C^2(\bb T^d)$. By the third Green identity (see Appendix~\ref{appendix}, Theorem \ref{evans}), \begin{equation*}
\int_{\bb T^d} \big(h\Delta g - g \Delta h\big) \,du\;=\;\int_{\p\Lambda}\Big(h\<\nabla g,\vec{\zeta}\,\>-g\<\nabla h,\vec{\zeta}\,\>\Big) \,dS\,,
\end{equation*} 
where $dS$ is an infinitesimal volume element of $\p\Lambda$. Thus, 
\begin{align*}
\<H,-\Lr G\>=& \<h_1{\bf 1}_\Lambda+h_2{\bf 1}_{\Lambda^\complement}, -\Delta g_1{\bf 1}_\Lambda-\Delta g_2{\bf 1}_{\Lambda^\complement} \>\\
=& -\int_{\Lambda}h_1\Delta g_1\, du - \int_{\Lambda^\complement} h_2\Delta g_2\, du\\
=& -\int_{\Lambda}g_1\Delta h_1\, du -\int_{\p\Lambda}\Big(h_1\<\nabla g_1,\vec{\zeta}\,\>-g_1\<\nabla h_1,\vec{\zeta}\,\>\Big)\, dS\\
&-\int_{\Lambda^\complement} g_2\Delta h_2\, du-\int_{\p\Lambda^\complement}\Big(h_2\<\nabla g_2,\cev{\zeta}\,\>-g_2\<\nabla h_2,\cev{\zeta}\,\>\Big)\, dS\\
=& -\int_{\Lambda}g_1\Delta h_1\, du -\int_{\p\Lambda}\Big(h_1\<\nabla g_1,\vec{\zeta}\,\>-g_1\<\nabla h_1,\vec{\zeta}\,\>\Big)\, dS\\
&-\int_{\Lambda^\complement} g_2\Delta h_2\, du-\int_{\p\Lambda^\complement}\Big(g_2\<\nabla h_2,\vec{\zeta}\,\>-h_2\<\nabla g_2,\vec{\zeta}\,\>\Big)\, dS\,.
\end{align*} 
Using the boundary condition  in the item (ii) of Definition~\ref{operator1} and $\p\Lambda^\complement=\p\Lambda$, we conclude that the last expression above is equal to
\begin{align*}
&-\int_{\Lambda}g_1\Delta h_1\, du- \int_{\Lambda^\complement} g_2\Delta h_2\, du\\
&-\int_{\p\Lambda}\!\!\Big((h_1\!-\!h_2)\sum_{j=1}^d|\<\vec{\zeta}, e_j\>|(g_2\!-\!g_1)\!-\!(g_1-g_2)\sum_{j=1}^d|\<\vec{\zeta}, e_j\>|(h_2-h_1)\Big)dS\\
&=-\int_{\Lambda}g_1\Delta h_1\, du- \int_{\Lambda^\complement} g_2\Delta h_2\, du\,.
\end{align*}
Then, $
\<H,-\Lr G\>=-\int_{\Lambda}g_1\Delta h_1\, du - \int_{\Lambda^\complement} g_2\Delta g_2\, du=\, \<-\Lr H, G\>$. 
For the nonnegativeness, note that
\begin{align*}
&\<H, -\Lr H\>\;=\; - \int_{\Lambda}h_1 \Delta h_1\,du-\int_{\Lambda^\complement}h_2 \Delta h_2\,du\\
&=\int_{\Lambda}|\nabla h_1|^2\,du+\int_{\Lambda^\complement}|\nabla h_2|^2\, du-\int_{\p\Lambda}\Big(\<\nabla h_1,\vec{\zeta}\,\>h_1+\<\nabla h_2, \vec{\zeta}\,\>h_2\Big) \,dS
\end{align*}
where the second equality above holds by the second Green identity, see Appendix, Theorem \ref{evans}, and $\p(\Lambda^{\complement})=\p\Lambda$.
Since $\int_{\Lambda}|\nabla h_i|^2\,du\geq0$, for $i=1,2$, it is enough to check that $-\int_{\p\Lambda}\Big(\<\nabla h_1, \vec{\zeta}\,\>h_1+\<\nabla h_2, \vec{\zeta}\,\>h_2 \Big)\,dS\;\geq\;0$. In fact, 
\begin{align*}
&-\int_{\p\Lambda}\Big(\<\nabla h_1, \vec{\zeta}\,\>h_1+\<\nabla h_2, \cev{\zeta}\,\>h_2\Big) \,dS\; =\;-\int_{\p\Lambda}\Big(\<\nabla h_1, \vec{\zeta}\,\>h_1-\<\nabla h_2, \vec{\zeta}\,\>h_2 \Big)\,dS\\
&=\!\int_{\p\Lambda}\sum_{j=1}^d|\<\vec{\zeta},e_j\>|\Big((h_2-h_1)h_2-(h_2-h_1)h_1\Big) dS\\
&=2\int_{\p\Lambda}\sum_{j=1}^d|\<\vec{\zeta},e_j\>|(h_2-h_1)^2\, dS\;\geq\; 0\,,
\end{align*}
where  the second equality holds by item (ii) of Definition \ref{operator1}.
\end{proof}

\begin{proposition}\label{embedding1}
The embedding $\HrF \hookrightarrow L^2(\bb T^d)$ is compact.
\end{proposition}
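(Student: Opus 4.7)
The plan is to identify the energetic space $\HrF$ as a subspace of $\mathcal{H}^1(\bb T^d\setminus\p\Lambda)$ and then invoke the Rellich--Kondrachov compactness theorem on each of the smooth bounded pieces $\Lambda$ and $\Lambda^\complement$.

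First I would compute the energetic norm explicitly. From the integration by parts carried out in the proof of Proposition~\ref{simetrico1}, for $H=h_1\mathbf 1_\Lambda+h_2\mathbf 1_{\Lambda^\complement}\in\Dr$ we have
\begin{equation*}
\<H,(\bb I-\Lr)H\>\;=\;\|H\|^2+\int_\Lambda|\nabla h_1|^2\,du+\int_{\Lambda^\complement}|\nabla h_2|^2\,du+2\!\int_{\p\Lambda}\!\sum_{j=1}^d|\<\vec\zeta,e_j\>|(h_2-h_1)^2\,dS.
\end{equation*}
Since the surface term is non-negative, the energetic inner product dominates the full $\mc H^1(\bb T^d\setminus\p\Lambda)$ inner product; that is, there is a constant $C>0$ such that $\Vert H\Vert_{\mc H^1(\bb T^d\setminus\p\Lambda)}^2\le C\<H,H\>_{\mc E(\bb I-\Lr)}$ for all $H\in\Dr$.

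Next I would extend this bound to all of $\HrF$. Given $F\in\HrF$ with admissible sequence $\{F_n\}\subset\Dr$, the sequence is Cauchy in the energetic norm and converges to $F$ in $L^2(\bb T^d)$. By the inequality above, the restrictions of $F_n$ to $\Lambda$ (resp.\ $\Lambda^\complement$) form a Cauchy sequence in $\mc H^1(\Lambda)$ (resp.\ $\mc H^1(\Lambda^\complement)$); their $\mc H^1$ limits must coincide with the restrictions of $F$ (by uniqueness of limits in the distributional sense), so $F\in\mc H^1(\bb T^d\setminus\p\Lambda)$ and $\Vert F\Vert_{\mc H^1(\bb T^d\setminus\p\Lambda)}\le C^{1/2}\Vert F\Vert_{\text{\rm Fried}}$. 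Thus the inclusion $\HrF\hookrightarrow\mc H^1(\bb T^d\setminus\p\Lambda)$ is a continuous embedding.

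Finally I would invoke Rellich--Kondrachov. Because $\p\Lambda$ is smooth, both $\Lambda$ and $\Lambda^\complement$ are bounded open subsets of $\bb T^d$ with Lipschitz (in fact $C^\infty$) boundary, so $\mc H^1(\Lambda)\hookrightarrow L^2(\Lambda)$ and $\mc H^1(\Lambda^\complement)\hookrightarrow L^2(\Lambda^\complement)$ are compact. Since $\bb T^d\setminus\p\Lambda=\Lambda\cup\Lambda^\complement$ (a set of full Lebesgue measure) and $L^2(\bb T^d)\cong L^2(\Lambda)\oplus L^2(\Lambda^\complement)$, the embedding $\mc H^1(\bb T^d\setminus\p\Lambda)\hookrightarrow L^2(\bb T^d)$ is compact. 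Composing this with the continuous embedding $\HrF\hookrightarrow\mc H^1(\bb T^d\setminus\p\Lambda)$ yields the desired compact embedding $\HrF\hookrightarrow L^2(\bb T^d)$.

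The only delicate point is the second step, namely verifying rigorously that a Cauchy-in-energetic-norm sequence produces an $\mc H^1$ limit on each side of the membrane; this requires nothing more than the uniqueness of distributional limits applied separately in $\Lambda$ and $\Lambda^\complement$, together with the fact that the boundary contribution in the energetic norm is non-negative and hence can be discarded for this bound. The analogous statement for $\HnF$ then follows verbatim with the surface term absent.
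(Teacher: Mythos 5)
Your proposal is correct and follows essentially the same route as the paper: both compute the energetic norm via Green's identity as the sum of the $L^2$ norms, the gradient norms on $\Lambda$ and $\Lambda^\complement$, and a nonnegative surface term, and then apply the Rellich--Kondrachov theorem separately on the two pieces. The only difference is organizational --- you package the argument as a continuous embedding $\HrF\hookrightarrow\mc H^1(\bb T^d\backslash\p\Lambda)$ composed with a compact one, whereas the paper extracts the convergent subsequence directly from an approximating sequence in $\Dr$; the underlying estimates are identical.
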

\begin{proof}
Let $\{ H_n\}$ be a bounded sequence in $\HrF $. Fix $\{ F_n\}$ a sequence in $\Dr$ such that $\Vert F_n-H_n\Vert \to 0$ when $n\rightarrow\infty$ and $\{ F_n\}$ is also bounded in $\HrF$. Thus, to show the compact embedding we need prove that $\{H_n\}$ have a convergent subsequence in $L^2(\bb T^d)$.  To get a convergent subsequence of $\{ H_n\}$, it is sufficient to find a convergent subsequence of $\{F_n\}$ in  $L^2(\bb T^d)$. Write $F_n=f_n\mathbf{1}_\Lambda+\tilde{f_n}\mathbf{1}_{\Lambda^\complement}$, with $f_n, \tilde{f_n} \in C^2(\bb T^d)$. 
Then,
\begin{align*}
&\< F_n,F_n\>_{\mc E(\bb I-\Lr)}=\<F_n,F_n\>+\<F_n, -\Lr F_n\>\\
&=\<f_n\mathbf{1}_\Lambda+\tilde{f_n}\mathbf{1}_{\Lambda^\complement},f_n\mathbf{1}_\Lambda+\tilde{f_n}\mathbf{1}_{\Lambda^\complement}\>+ \<f_n\mathbf{1}_\Lambda+\tilde{f_n}\mathbf{1}_{\Lambda^\complement},-\Delta f_n\mathbf{1}_{\Lambda}-\Delta \tilde{f_n}\mathbf{1}_{\Lambda^\complement} \>\,.
\end{align*}
Expanding the right hand side of above and using Green identity (see  Appendix~\ref{appendix}, Theorem \ref{evans}), we get that
\begin{align*}
 &\int_\Lambda f_n^2\,du + \int_{\Lambda^\complement}\tilde{f_n}^2\,du - \int_\Lambda f_n\Delta f_n\,du -\int_{\Lambda^\complement}\tilde{f_n}\Delta \tilde{f_n}\,du\\
&=\Vert f_n \mathbf{1}_{\Lambda}\Vert^2+\Vert \tilde{f_n} \mathbf{1}_{\Lambda^\complement}\Vert^2+\Vert \nabla f_n \mathbf{1}_{\Lambda}\Vert^2+\Vert \nabla \tilde{f_n} \mathbf{1}_{\Lambda^\complement}\Vert^2\\
&+2\int_{\p\Lambda}\sum_{j=1}^d|\<\vec{\zeta}, e_j\>|(f_n-\tilde{f_n})^2 dS\,.
\end{align*}
Under the hypotheses of boundedness of the sequence $\{F_n\}$  in the norm induced by $\<\cdot,\cdot\>_{\mc E(\bb I-\Lr)}$, the sequences $\{\Vert f_n \mathbf{1}_{\Lambda}\Vert^2\}$, $\{\Vert \tilde{f_n} \mathbf{1}_{\Lambda^\complement}\Vert^2\}$, $\{\Vert \nabla f_n \mathbf{1}_{\Lambda}\Vert^2\}$ and $\{\Vert \nabla \tilde{f_n} \mathbf{1}_{\Lambda^\complement}\Vert^2\}$ are bounded. By the Rellich-Kondrachov compactness theorem (see \cite[Theorem 5.7.1]{e}), $\{f_n \mathbf{1}_{\Lambda}\}$, $\{\tilde{f}_n\mathbf{1}_{\Lambda^\complement}\}$ have a common convergent subsequence in $L^2(\bb T^d)$. This implies that $\{F_n\}$ has a convergent subsequence. 
\end{proof}
\begin{proposition} \label{simetrico}
The operator $-\Ln:\Dn\rightarrow L^2(\bb T^d)$ is symmetric and nonnegative.
\end{proposition}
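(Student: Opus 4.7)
The plan is to mimic the proof of Proposition~\ref{simetrico1}, exploiting the fact that the Neumann boundary condition in item (ii) of Definition~\ref{operator} makes all boundary terms vanish outright, rather than needing them to cancel as they did in the Robin case.

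First I would pick arbitrary $H,G\in\Dn$, writing $H=h_1\mathbf{1}_\Lambda+h_2\mathbf{1}_{\Lambda^\complement}$ and $G=g_1\mathbf{1}_\Lambda+g_2\mathbf{1}_{\Lambda^\complement}$ with $h_i,g_i\in C^2(\bb T^d)$, and apply Green's third identity (Theorem~\ref{evans}) separately on $\Lambda$ and on $\Lambda^\complement$. Because $\p\Lambda^\complement=\p\Lambda$ with exterior normal $\cev{\zeta}=-\vec{\zeta}$, the computation yields
\begin{align*}
\<H,-\Ln G\>\;=\;&-\int_\Lambda g_1\Delta h_1\,du-\int_{\Lambda^\complement}g_2\Delta h_2\,du\\
&-\int_{\p\Lambda}\Big(h_1\<\nabla g_1,\vec{\zeta}\,\>-g_1\<\nabla h_1,\vec{\zeta}\,\>\Big)dS\\
&+\int_{\p\Lambda}\Big(h_2\<\nabla g_2,\vec{\zeta}\,\>-g_2\<\nabla h_2,\vec{\zeta}\,\>\Big)dS\,.
\end{align*}
By the Neumann condition in Definition~\ref{operator}, the quantities $\<\nabla h_i,\vec{\zeta}\,\>$ and $\<\nabla g_i,\vec{\zeta}\,\>$ vanish identically on $\p\Lambda$ for $i=1,2$, so every surface integral above is zero. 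The remaining expression is symmetric in $(H,G)$, which gives $\<H,-\Ln G\>=\<-\Ln H,G\>$.

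For nonnegativity, I would take $G=H$ in the display above and instead apply Green's second identity (integration by parts) on each region, obtaining
\begin{equation*}
\<H,-\Ln H\>\;=\;\int_\Lambda|\nabla h_1|^2\,du+\int_{\Lambda^\complement}|\nabla h_2|^2\,du-\int_{\p\Lambda}\Big(h_1\<\nabla h_1,\vec{\zeta}\,\>+h_2\<\nabla h_2,\cev{\zeta}\,\>\Big)dS\,.
\end{equation*}
Again the Neumann condition forces the boundary integral to vanish, and the two remaining $L^2$-norms of gradients are manifestly nonnegative, so $\<H,-\Ln H\>\geq 0$.

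There is no real obstacle: the Neumann case is strictly easier than the Robin case already handled in Proposition~\ref{simetrico1}, since boundary terms vanish one by one instead of cancelling in pairs. The only point requiring a touch of care is the sign flip coming from $\cev{\zeta}=-\vec{\zeta}$ when applying Green's identity on $\Lambda^\complement$, but this is harmless here because both $\<\nabla h_2,\vec{\zeta}\,\>$ and $\<\nabla h_2,\cev{\zeta}\,\>$ are zero on $\p\Lambda$.
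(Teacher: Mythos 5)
Your proof is correct and follows essentially the same route as the paper: apply Green's identities on $\Lambda$ and $\Lambda^\complement$ separately and use the Neumann condition of Definition~\ref{operator}(ii) to annihilate all surface integrals, yielding symmetry and nonnegativity. Your explicit attention to the sign of the exterior normal on $\Lambda^\complement$ is a harmless extra precision, not a deviation.
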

\begin{proof}
Let $H, G\in \Dn$. We can write 
$H=h_1{\bf 1}_\Lambda+h_2{\bf 1}_\Lambda^\complement$ and $ G=g_1{\bf 1}_\Lambda+g_2{\bf 1}_\Lambda^\complement$, where $h_1, h_2, g_1, g_2 \in C^2(\bb T^d)$. By the third Green identity, see Appendix~\ref{appendix}, Theorem \ref{evans}, we have that
\begin{equation*}
\int_{\bb T^d} h\Delta g \,du- g \Delta h \,du\;=\;\int_{\p\Lambda}h\<\nabla g,\vec{\zeta}\,\>-g\<\nabla h,\vec{\zeta}\,\> \,dS\;=\;0\,,
\end{equation*} where $dS$ is the infinitesimal volume element of $\p\Lambda$. Thus, 
\begin{align*}
& \<H,-\Ln G\>=\<h_1{\bf 1}_\Lambda+h_2{\bf 1}_{\Lambda^\complement}, -\Delta g_1{\bf 1}_\Lambda-\Delta g_2{\bf 1}_{\Lambda^\complement} \>\\
&= -\!\int_{\Lambda}h_1\Delta g_1 du - \!\int_{\Lambda^\complement} h_2\Delta g_2 du= -\!\int_{\Lambda}g_1\Delta h_1 du -\! \int_{\Lambda^\complement} g_2\Delta g_2 du= \<-\Ln H, G\>.
\end{align*}
For  nonnegativeness, 
\begin{align*}
\<H, -\mc L_{\Lambda}H\>&= - \int_{\Lambda}h_1 \Delta h_1\,du- \int_{\Lambda^\complement}h_2 \Delta h_2\,du=\int_{\Lambda}|\nabla h_1|^2\,du+\int_{\Lambda^\complement}|\nabla h_2|^2\, du\geq0,
\end{align*}
where the second equality above holds due to the second Green identity, see Appendix~\ref{appendix}, Theorem \ref{evans}.
\end{proof}

\begin{lemma}\label{embedding}
The embedding $\HnF \hookrightarrow L^2(\bb T^d)$ is compact.
\end{lemma}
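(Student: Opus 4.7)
The plan is to mirror the argument for Proposition~\ref{embedding1}, which handled the Robin case; the Neumann case is structurally identical but actually simpler because no surface integral appears in the energetic form. Concretely, I take a bounded sequence $\{H_n\}$ in $\HnF$ and, using the definition of the energetic space, choose an approximating sequence $\{F_n\}\subset \Dn$ with $\|F_n-H_n\|_{L^2(\bb T^d)}\to 0$ and $\{F_n\}$ bounded with respect to $\<\cdot,\cdot\>_{\text{\rm Fried}}$. It then suffices to extract an $L^2$-convergent subsequence from $\{F_n\}$.

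Writing $F_n = f_n\mathbf{1}_\Lambda + \tilde f_n \mathbf{1}_{\Lambda^\complement}$ with $f_n,\tilde f_n\in C^2(\bb T^d)$, I would compute the energetic norm exactly as in Proposition~\ref{embedding1}: expanding
\begin{equation*}
\<F_n,F_n\>_{\mc E(\bb I-\Ln)}\;=\;\<F_n,F_n\>+\<F_n,-\Ln F_n\>
\end{equation*}
and applying Green's second identity on $\Lambda$ and on $\Lambda^\complement$ separately. The key point is that, because of item (ii) in Definition~\ref{operator}, the boundary terms $\int_{\p\Lambda} h\<\nabla g,\vec{\zeta}\,\>\,dS$ produced by integration by parts now vanish (instead of being rewritten via the Robin relation). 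Hence
\begin{equation*}
\<F_n,F_n\>_{\mc E(\bb I-\Ln)} \;=\; \|f_n\mathbf{1}_\Lambda\|^2 + \|\tilde f_n\mathbf{1}_{\Lambda^\complement}\|^2 + \|\nabla f_n\mathbf{1}_\Lambda\|^2 + \|\nabla\tilde f_n\mathbf{1}_{\Lambda^\complement}\|^2,
\end{equation*}
which shows that $\{f_n\}$ is bounded in $\mc H^1(\Lambda)$ and $\{\tilde f_n\}$ is bounded in $\mc H^1(\Lambda^\complement)$ (both are open sets with smooth boundary since $\p\Lambda$ is smooth and $\Lambda$ is simple connected).

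I would then apply the Rellich--Kondrachov compactness theorem (\cite[Theorem 5.7.1]{e}) independently on the two open sets $\Lambda$ and $\Lambda^\complement$ to extract a common subsequence along which $f_n\mathbf{1}_\Lambda$ converges in $L^2(\Lambda)$ and $\tilde f_n\mathbf{1}_{\Lambda^\complement}$ converges in $L^2(\Lambda^\complement)$. Combining the two limits yields convergence of $F_n$ in $L^2(\bb T^d)=L^2(\Lambda)\oplus L^2(\Lambda^\complement)$, and therefore of $H_n$ as well. There is no real obstacle here: the only mild care needed is to verify that applying Rellich--Kondrachov to the two pieces separately is justified, which is immediate since $\Lambda$ and $\Lambda^\complement$ are bounded open sets with smooth boundary $\p\Lambda$, and the Neumann boundary condition plays no role beyond killing the surface term in the energetic norm computation.
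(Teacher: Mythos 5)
Your proposal is correct and follows essentially the same route as the paper's own proof: approximate $H_n$ by $F_n\in\Dn$, compute the energetic norm via Green's identity (the Neumann condition killing the surface term), obtain $\mc H^1$ bounds on $f_n\mathbf{1}_\Lambda$ and $\tilde f_n\mathbf{1}_{\Lambda^\complement}$, and conclude by Rellich--Kondrachov on each piece. No substantive differences.
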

\begin{proof}
Let $\{ H_n\}$ be a bounded sequence in $\Hn$. Fix a sequence $\{ F_n\}$  of functions in $\Dn$ 
such that $\Vert F_n-H_n\Vert \to 0$ when $n\rightarrow\infty$ and $\{ F_n\}$ is also bounded in $\HnF$. Thus, to show the compact embedding we need to prove that $\{H_n\}$ has a convergent subsequence in $L^2(\bb T^d)$.  To get a convergent subsequence of $\{ H_n\}$,
 it is sufficient to find a convergent subsequence of $\{F_n\}$ in  $L^2(\bb T^d)$. Write $F_n=f_n\mathbf{1}_\Lambda+\tilde{f_n}\mathbf{1}_{\Lambda^\complement}$, with $f_n\in C^2(\bb T^d)$. 
Then,
\begin{align*}
&\< F_n,F_n\>_{\mc E(\bb I-\Ln)}=\<F_n,F_n\>+\<F_n, -\Ln F_n\>\\
&=\<f_n\mathbf{1}_\Lambda+\tilde{f_n}\mathbf{1}_{\Lambda^\complement},f_n\mathbf{1}_\Lambda+\tilde{f_n}\mathbf{1}_{\Lambda^\complement}\>+ \<f_n\mathbf{1}_\Lambda+\tilde{f_n}\mathbf{1}_{\Lambda^\complement},-\Delta f_n\mathbf{1}_{\Lambda}-\Delta \tilde{f_n}\mathbf{1}_{\Lambda^\complement} \>.
\end{align*}
Expanding the right hand side and using Green identity, see Appendix~\ref{appendix}, Theorem \ref{evans}, we get that
\begin{align*}
 &\int_\Lambda f_n^2\,du + \int_{\Lambda^\complement}\tilde{f_n}^2\,du - \int_\Lambda f_n\Delta f_n\,du -\int_{\Lambda^\complement}\tilde{f_n}^2\Delta \tilde{f_n}\,du\\
&=\Vert f_n \mathbf{1}_{\Lambda}\Vert^2+\Vert \tilde{f_n} \mathbf{1}_{\Lambda^\complement}\Vert^2+\Vert \nabla f_n \mathbf{1}_{\Lambda}\Vert^2+\Vert \nabla \tilde{f_n} \mathbf{1}_{\Lambda^\complement}\Vert^2\,.
\end{align*}
Under the hypotheses of boundedness of the sequence $\{F_n\}$ in the norm induced by $\<\cdot,\cdot\>_{\mc E(\bb I-\Ln)}$, the sequences $\{\Vert f_n \mathbf{1}_{\Lambda}\Vert^2\}$, $\{\Vert \tilde{f_n} \mathbf{1}_{\Lambda^\complement}\Vert^2\}$, $\{\Vert \nabla f_n \mathbf{1}_{\Lambda}\Vert^2\}$ and $\{\Vert \nabla \tilde{f_n} \mathbf{1}_{\Lambda^\complement}\Vert^2\}$ are bounded. By the Rellich-Kondrachov Compactness Theorem,  $\{f_n \mathbf{1}_{\Lambda}\}$, $\{\tilde{f}_n\mathbf{1}_{\Lambda^\complement}\}$ have a common convergent subsequence in $L^2(\bb T^d)$. This implies that $\{F_n\}$ has a convergent subsequence. 
\end{proof}

\appendix
\section{Auxiliary results}\label{appendix}
\begin{proposition}[\cite{fgn1}] \label{A.3}
Let $G_1, \, G_2, \, G_3$ are continuous functions defined on the torus d-dimensional $\bb T^d$.  Then, the application from $D([0,T],\mc M)$ to $\bb R$ that associates to a trajectory $\{\pi_t:0\leq t\leq T\}$ the number
\begin{equation*}
\sup_{0\leq t \leq T}\Big|\<\pi_t, G_1\>-\<\pi_0, G_2\>-\int_0^t\<\pi_s, G_3\> \, ds\Big|
\end{equation*}
is continuous in the Skorohod metric of $D([0,T],\mc M)$.
\end{proposition}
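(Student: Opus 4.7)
Denote by $F:D([0,T],\mc M)\to\bb R$ the functional
\begin{equation*}
F(\pi)\;:=\;\sup_{0\leq t\leq T}\Big|\<\pi_t,G_1\>-\<\pi_0,G_2\>-\int_0^t\<\pi_s,G_3\>\,ds\Big|,
\end{equation*}
and fix a sequence $\pi^n\to\pi$ in $D([0,T],\mc M)$. The plan is to show $F(\pi^n)\to F(\pi)$. By the standard characterization of the Skorohod topology, I can choose strictly increasing continuous bijections $\lambda_n:[0,T]\to[0,T]$ with $\lambda_n(0)=0$, $\lambda_n(T)=T$, such that $\sup_{t}|\lambda_n(t)-t|\to 0$ and $\sup_{t}d(\pi^n_{\lambda_n(t)},\pi_t)\to 0$, where $d$ is any metric compatible with the weak topology on $\mc M$.

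Since $\lambda_n$ is a bijection of $[0,T]$, substituting $t=\lambda_n(u)$ inside the supremum yields
\begin{equation*}
F(\pi^n)\;=\;\sup_{u\in[0,T]}\Big|\<\pi^n_{\lambda_n(u)},G_1\>-\<\pi^n_0,G_2\>-\int_0^{\lambda_n(u)}\<\pi^n_s,G_3\>\,ds\Big|.
\end{equation*}
Using the elementary bound $\big|\sup_u a_n(u)-\sup_u a(u)\big|\leq\sup_u|a_n(u)-a(u)|$, it suffices to show that each of the three summands inside the absolute value converges, uniformly in $u\in[0,T]$, to its counterpart involving $\pi$.

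The first summand is immediate: $\pi^n_{\lambda_n(u)}\to\pi_u$ uniformly in the weak topology of $\mc M$, and $G_1$ is continuous on the compact torus $\bb T^d$, hence $\<\pi^n_{\lambda_n(u)},G_1\>\to\<\pi_u,G_1\>$ uniformly. The second summand is the constant $\<\pi^n_0,G_2\>$, which converges to $\<\pi_0,G_2\>$ by specializing $u=0$. For the third I would use the triangle inequality
\begin{equation*}
\Big|\!\int_0^{\lambda_n(u)}\!\!\<\pi^n_s,G_3\>ds-\!\int_0^u\!\<\pi_s,G_3\>ds\Big|\,\leq\,\int_0^T\!\!|\<\pi^n_s,G_3\>-\<\pi_s,G_3\>|\,ds+\|G_3\|_\infty|\lambda_n(u)-u|,
\end{equation*}
whose right-hand side is bounded uniformly in $u$.

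The last term on the right vanishes by the uniform convergence $\lambda_n\to\mathrm{id}$. For the integral term I invoke the standard property that Skorohod convergence $\pi^n\to\pi$ implies $\pi^n_s\to\pi_s$ weakly at every continuity point $s$ of $\pi$; since a c\`adl\`ag trajectory has at most countably many discontinuities, $\<\pi^n_s,G_3\>\to\<\pi_s,G_3\>$ for Lebesgue-a.e.~$s\in[0,T]$. Combined with the uniform bound $|\<\pi^n_s,G_3\>|\leq\|G_3\|_\infty$ (total mass at most one) and dominated convergence, this gives $\int_0^T|\<\pi^n_s,G_3\>-\<\pi_s,G_3\>|\,ds\to 0$. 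Assembling the three uniform limits completes the argument. The only mildly subtle point is this a.e.~pointwise convergence of $\<\pi^n_s,G_3\>$, but it is a well-known consequence of the definition of the Skorohod topology.
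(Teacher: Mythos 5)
Your proof is correct. The paper itself offers no proof of this proposition --- it is imported verbatim from the cited reference \cite{fgn1} --- and your argument is the standard one for exactly this statement: pass to the time changes $\lambda_n$ from the definition of Skorohod convergence, control the three terms uniformly in $u$ (using that the total mass of each $\pi_t$ is at most one so that $|\<\mu,G\>|\leq\|G\|_\infty$), and handle the time integral via a.e.\ pointwise convergence at continuity points of $\pi$ plus dominated convergence. The only points worth flagging, neither of which is a gap, are (i) the uniform convergence $\<\pi^n_{\lambda_n(u)},G_1\>\to\<\pi_u,G_1\>$ tacitly uses the explicit form of a metric $d$ metrizing the weak topology on $\mc M$ (e.g.\ $d(\mu,\nu)=\sum_k 2^{-k}\min\{1,|\<\mu-\nu,g_k\>|\}$ for a dense sequence $\{g_k\}\subset C(\bb T^d)$), together with approximation of $G_1$ by some $g_k$ in sup norm; and (ii) the countability of the discontinuity set of a c\`adl\`ag path in $\mc M$, which holds since $\mc M$ is a compact metrizable space. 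With those two routine facts spelled out, the argument is complete and coincides with the proof in the cited source.
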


\begin{theorem}[Green's formulas, see for instance Appendix C of \cite{e}] \label{evans}
Let $u,v\in C^2(\bar{U})$, where   $U$ is a  bounded open subset of $\bb R^n$, and $\p U$ is $C^1$. Denote by $\cdot $ the inner product in $\bb R^n$, and by $\nu$ the normal exterior unitary  vector to $U$ at $\p U$. Then,
\begin{enumerate}
	\item[(i)] $\int_U \Delta u dx \;= \;\int _{\p U}\frac{\p u}{\p \nu}\, dS$,
	\item[(ii)] $\int_U \nabla v\cdot \nabla u \, dx\;=\;-\int_U u\Delta v\,dx+ \int_{\p U}\frac{\p u}{\p \nu}u\, dS$,
	\item[(iii)] $\int_U u\Delta v-v\Delta u\, dx\;=\; \int _{\p U}u\frac{\p u}{\p \nu}-v\frac{\p u}{\p \nu}\, dS$. 
\end{enumerate}

\end{theorem}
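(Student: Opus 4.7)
The plan is to derive all three identities from a single input, namely the divergence theorem, which under the stated regularity ($u,v \in C^2(\bar U)$ and $\p U$ of class $C^1$) asserts that
\[
\int_U \mathrm{div}(F)\, dx \;=\; \int_{\p U} F\cdot \nu \, dS
\qquad \text{for every } F \in C^1(\bar U;\bb R^n).
\]
I would treat this as the black-box tool; each of (i), (ii), (iii) then falls out by choosing $F$ cleverly and applying the product rule.

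First, for (i), I would take $F := \nabla u$, which lies in $C^1(\bar U;\bb R^n)$ since $u\in C^2(\bar U)$. Then $\mathrm{div}(F) = \Delta u$ and $F\cdot \nu = \p u/\p \nu$, so (i) is immediate. Second, for (ii), I would take $F := u\,\nabla v$. The product rule gives $\mathrm{div}(u\,\nabla v) = \nabla u \cdot \nabla v + u\,\Delta v$, while on $\p U$ one has $(u\,\nabla v)\cdot \nu = u\,(\p v/\p \nu)$. Substituting into the divergence theorem and rearranging yields (ii) (with the boundary integrand understood as $u\,\p v/\p \nu$). Third, for (iii), I would apply (ii) twice—once as stated and once with $u$ and $v$ interchanged—and subtract; the symmetric bulk term $\int_U \nabla u\cdot\nabla v\, dx$ then cancels, leaving exactly (iii).

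The only substantive obstacle is establishing the divergence theorem itself on a $C^1$ domain, since once that is in hand the three identities are purely algebraic consequences. The standard route is to cover $\p U$ by finitely many charts that locally straighten the boundary into a piece of a half-space, introduce a smooth partition of unity subordinate to this cover, and reduce to the elementary case of a $C^1$ vector field compactly supported in a half-space (where the identity follows from Fubini and the fundamental theorem of calculus applied coordinate by coordinate). Handling the boundary term requires tracking the Jacobian of the chart and identifying the resulting surface element with $dS$ and the pulled-back outer normal with $\nu$. Because this construction is carried out in full detail in Evans, Appendix C, I would cite \cite{e} rather than reproduce it here.
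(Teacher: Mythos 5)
Your proposal is correct. The paper offers no proof of this statement at all---it is an auxiliary result quoted verbatim from Appendix~C of \cite{e}---so there is nothing to compare against except the citation itself; your derivation of (i)--(iii) from the divergence theorem via the choices $F=\nabla u$ and $F=u\,\nabla v$, followed by antisymmetrization, is exactly the standard argument in that reference, and you are right both to defer the divergence theorem on a $C^1$ domain to \cite{e} and to read the boundary integrands in (ii) and (iii) as $u\,\p v/\p\nu$ and $u\,\p v/\p\nu - v\,\p u/\p\nu$ respectively (the statement as printed contains typographical slips there).
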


\section*{Acknowledgements}
T.F. was supported through a project Jovem Cientista-9922/2015, FAPESB-Brazil. M.T. would like to thank CAPES for a PhD scholarship, which supported her research.

\bibliographystyle{plain}
\bibliography{bibliografia}

\end{document}